\newtheorem{proposition}{Proposition}[section]
\newtheorem{theorem}[proposition]{Theorem}
\newtheorem{lemma}[proposition]{Lemma}
\newtheorem{prop}[proposition]{Proposition}
\newtheorem{cor}[proposition]{Corollary}
\theoremstyle{definition}
\theoremstyle{remark}
\newtheorem{remark}[proposition]{Remark}
\newcommand{\reals}{\mathbb R}
\newcommand{\set}[1]{{\left\lbrace #1 \right\rbrace}}
\newcommand{\pidown}{\pi_\downarrow}
\newcommand{\piup}{\pi^\uparrow}
\newcommand{\join}{\vee}
\newcommand{\meet}{\wedge}
\newcommand{\K}{\mathbb{K}}
\renewcommand{\H}{\mathcal{H}}
\newcommand{\std}{\operatorname{std}}
\newcommand{\inv}{\operatorname{inv}}
\newcommand{\rp}{\operatorname{rp}}
\newcommand{\rv}{\operatorname{rv}}
\newcommand{\PBT}{\operatorname{PBT}}
\newcommand{\dRec}{{\operatorname{dRec}}}
\newcommand{\dR}{{\operatorname{dR}}}
\newcommand{\tB}{{\operatorname{tB}}}
\newcommand{\tBax}{{\operatorname{tBax}}}
\newcommand{\Bax}{{\operatorname{Bax}}}
\newcommand{\tw}{{\operatorname{tw}}}
\newcommand{\TL}{{\operatorname{TL}}}
\newcommand{\BR}{{\operatorname{BR}}}
\newcommand{\A}{{\operatorname{A}}}
\newcommand{\Arr}{{\mathcal{A}}}
\newcommand{\F}{{\mathcal{F}}}
\newcommand{\B}{{\operatorname{B}}}
\renewcommand{\th}{^{\mbox{\footnotesize th}}}
\newcommand{\covered}{\lessdot}
\newcommand{\into}{\hookrightarrow}
\newcommand{\onto}{\twoheadrightarrow}
\newcommand{\som}[1]{\mbox{\sout{$#1$}}}
\newcommand{\margincolor}{Red}
\newcounter{margincounter}
\newcommand{\marginnum}{\textcolor{\margincolor}{\begin{picture}(0,0)\put(5,3){\circle{13}}\end{picture}\arabic{margincounter}}}
\newcommand{\margin}[1]
{\marginnum\marginpar{\textcolor{\margincolor}{\arabic{margincounter}} \tiny #1}\addtocounter{margincounter}{1}}
\newcommand{\proofread}
{
\ifthenelse{\isundefined{\margin}}
{
\special{!userdict begin /bop-hook{1.2 1.2 scale -51 -60 translate}def end}
}
{
\special{!userdict begin /bop-hook{1.2 1.2 scale -91 -60 translate}def end}
\setboolean{@mparswitch}{false} 
\addtolength{\marginparwidth}{-3mm}
}
}
\author{Shirley Law}
\author{Nathan Reading}
\address{Department of Mathematics, North Carolina State University, Raleigh, NC, USA}
\subjclass[2010]{Primary 05E15, 16T30; Secondary 05A19, 05A05}
\thanks{Nathan Reading was partially supported by NSA grant H98230-09-1-0056.}
\title{The Hopf algebra of diagonal rectangulations}
\begin{document}

\begin{abstract}
We define and study a combinatorial Hopf algebra dRec with basis elements indexed by diagonal rectangulations of a square. This Hopf algebra provides an intrinsic combinatorial realization of the Hopf algebra tBax of twisted Baxter permutations, which previously had only been described extrinsically as a Hopf subalgebra of the Malvenuto-Reutenauer Hopf algebra of permutations.
We describe the natural lattice structure on diagonal rectangulations, analogous to the Tamari lattice on triangulations, and observe that diagonal rectangulations index the vertices of a polytope analogous to the associahedron. We give an explicit bijection between twisted Baxter permutations and the better-known Baxter permutations, and describe the resulting Hopf algebra structure on Baxter permutations.
\end{abstract}

\keywords{Baxter permutation, diagonal rectangulation, Hopf algebra}

\maketitle

\vspace{-.3 in}

\setcounter{tocdepth}{2}

\section*{Contents}
\contentsline {section}{\tocsection {}{1}{Introduction}}{\pageref{intro}}
\contentsline {section}{\tocsection {}{2}{The Hopf algebra MR of permutations}}{\pageref{MR sec}}
\contentsline {subsection}{\tocsubsection {\quad\qquad}{2.1}{Product and coproduct}}{\pageref{MR subsec}}
\contentsline {subsection}{\tocsubsection {\quad\qquad}{2.2}{Automorphisms and antiautomorphisms}}{\pageref{auto sec}}
\contentsline {subsection}{\tocsubsection {\quad\qquad}{2.3}{Self-duality}}{\pageref{self sec}}
\contentsline {section}{\tocsection {}{3}{Some Hopf subalgebras and quotients of MR}}{\pageref{sub Hopf sec}}
\contentsline {subsection}{\tocsubsection {\quad\qquad}{3.1}{The Hopf algebras LR and LR$'$}}{\pageref{lr sec}}
\contentsline {subsection}{\tocsubsection {\quad\qquad}{3.2}{A pattern-avoidance description}}{\pageref{pattern sec}}
\contentsline {subsection}{\tocsubsection {\quad\qquad}{3.3}{The Hopf algebra NSym}}{\pageref{NSym sec}}
\contentsline {subsection}{\tocsubsection {\quad\qquad}{3.4}{Some quotients of MR}}{\pageref{quot sec}}
\contentsline {section}{\tocsection {}{4}{The Hopf algebra tBax of twisted Baxter permutations}}{\pageref{tBax Hopf sec}}
\contentsline {subsection}{\tocsubsection {\quad\qquad}{4.1}{$\mathcal {H}$-Families}}{\pageref{H fam sec}}
\contentsline {subsection}{\tocsubsection {\quad\qquad}{4.2}{Twisted Baxter permutations}}{\pageref{tBax sec}}
\contentsline {subsection}{\tocsubsection {\quad\qquad}{4.3}{The Hopf algebra tBax}}{\pageref{tBax Hopf subsec}}
\contentsline {section}{\tocsection {}{5}{The Hopf algebra dRec of diagonal rectangulations}}{\pageref{rec sec}}
\contentsline {subsection}{\tocsubsection {\quad\qquad}{5.1}{Diagonal rectangulations}}{\pageref{diag rec sec}}
\contentsline {subsection}{\tocsubsection {\quad\qquad}{5.2}{Product and coproduct}}{\pageref{Rec operations sec}}
\contentsline {section}{\tocsection {}{6}{The isomorphism from tBax to dRec}}{\pageref{isom sec}}
\contentsline {subsection}{\tocsubsection {\quad\qquad}{6.1}{Bijection}}{\pageref{bij sec}}
\contentsline {subsection}{\tocsubsection {\quad\qquad}{6.2}{Isomorphism}}{\pageref{isom subsec}}
\contentsline {section}{\tocsection {}{7}{The lattice and polytope of diagonal rectangulations}}{\pageref{lattice rec sec}}
\contentsline {section}{\tocsection {}{8}{Baxter permutations and twisted Baxter permutations}}{\pageref{Bax tBax sec}}
\contentsline {subsection}{\tocsubsection {\quad\qquad}{8.1}{Bijection}}{\pageref{Bax tBax bij}}
\contentsline {subsection}{\tocsubsection {\quad\qquad}{8.2}{Lattice and Hopf structures on Baxter permutations}}{\pageref{Bax lat Hopf sec}}
\contentsline {section}{\tocsection {}{}{Acknowledgments}}{\pageref{ack}}
\contentsline {section}{\tocsection {}{}{References}}{\pageref{bib}}

\section{Introduction}\label{intro}
Baxter permutations arose through the work of Baxter~\cite{Baxter} on pairs of commuting functions $f,g:[0,1]\to[0,1]$.
Interest in Baxter permutations increased when a pleasant formula was obtained \cite{CGHK} for the \emph{Baxter number} $B(n)$, the number of Baxter permutations in $S_n$:
\[B(n)=\binom{n+1}{1}^{-1}\binom{n+1}{2}^{-1}\,\,\sum_{k=1}^n\binom{n+1}{k-1}\binom{n+1}{k}\binom{n+1}{k+1}.\]
Baxter permutations also became an important example of generalized notions of pattern avoidance, for example in \cite{Bousquet-Melou,WestGTFS}.

The Baxter numbers made a surprising appearance in an algebraic context in~\cite{con_app}.
There, a lattice-theoretic construction produced an infinite family of Hopf subalgebras of the Malvenuto-Reutenauer \cite{Malvenuto,MR} Hopf algebra MR of permutations.
A prominent member of this infinite family is the Hopf algebra tBax, with basis elements indexed by \emph{twisted Baxter permutations}, which were computed in~\cite{con_app} to be counted by $B(n)$ for $n\le 15$.
West~\cite{West pers} later proved, using generating trees, that twisted Baxter permutations are counted by $B(n)$ for all $n$.
Other prominent members of the infinite family include the Hopf algebra NSym of noncommutative symmetric functions~\cite{GKLLRT} and the Loday-Ronco Hopf algebra LR of planar binary trees~\cite{LR}.
Just as NSym is the intersection of two anti-isomorphic copies of LR inside MR, the Hopf algebra tBax is the smallest member of the infinite family containing the two anti-isomorphic copies of LR.

Each of NSym and LR has a basis indexed by fundamental combinatorial objects (subsets and planar binary trees respectively), and the Hopf algebra operations (product and coproduct) are described intrinsically in terms of these objects.
By contrast, the operations in the Hopf algebra tBax are described by the construction in~\cite{con_app} only extrinsically.
That is, the operations are described only with reference to the embedding of tBax as a Hopf subalgebra of MR.
The starting point of this paper is an effort to provide tBax with an intrinsic description in terms of objects counted by $B(n)$.
A natural choice of objects is a certain class of \emph{rectangulations} (decompositions of a square into rectangles) shown by Ackerman, Barequet, and Pinter \cite[Lemma~5.2]{ABP} to be counted by $B(n)$.
These rectangulations, which we call \emph{diagonal rectangulations}, are closely related to an earlier construction by Dulucq and Guibert~\cite{DGStack} of certain ``twin'' pairs of planar binary trees.
They are also closely related to the \emph{mosaic floorplans}~\cite{HHCGDCG,YCCG} in the literature on VLSI circuit design.
(See Remark~\ref{mosaic}.)

The fundamental tool in this paper is a surjective map $\rho$ from permutations to diagonal rectangulations.
This map plays a role analogous to the map from permutations to trees which is central to the construction of LR.
In fact, $\rho$ is combinatorially very closely related to the map to trees.
We show that the fibers of $\rho$ are the congruence classes of a lattice congruence on the weak order, and show that the congruence in question coincides with the congruence used in~\cite{con_app} to define tBax as a Hopf subalgebra of MR.
From there, we conclude that $\rho$ restricts to a bijection between twisted Baxter permutations and diagonal rectangulations.
In particular, tBax can be realized as a Hopf algebra dRec with basis elements indexed by diagonal rectangulations.
The product and coproduct in dRec are shown to have natural descriptions in terms of the combinatorics of diagonal rectangulations.

The twisted Baxter permutations come with a natural lattice structure:
They are ordered as an induced subposet of the weak order, and this subposet is isomorphic to the quotient of the weak order modulo the lattice congruence mentioned above.
This lattice can be interpreted as a lattice on diagonal rectangulations, analogous to the well-known Tamari lattice~\cite{Tamari} on triangulations of a convex polygon.
We characterize this lattice in terms of certain local moves which we call \emph{pivots}, analogous to diagonal flips on triangulations.
The Hasse diagram of the lattice of diagonal rectangulations, thought of as an undirected graph, is analogous to the $1$-skeleton of the associahedron.  
This graph, which was also considered in~\cite{ABP}, is the $1$-skeleton of a convex polytope analogous to the associahedron.
Specifically, taking the Minkowski sum of two realizations of the associahedron, we obtain a polytope whose vertices are indexed by diagonal rectangulations and whose edges correspond to pivots.

The map $\rho$ is also instrumental in providing an explicit (but recursive) bijection between Baxter permutations and twisted Baxter permutations.
The bijection is the restriction of the downward projection map associated to a certain lattice congruence on the weak order, and its inverse is the restriction of the corresponding upward projection map.
Using the bijection, we construct a Hopf algebra Bax indexed by Baxter permutations, isomorphic to tBax and dRec, and having a simple description analogous to the original definition of tBax.
Also as a consequence of the bijection, we show that the lattice of diagonal rectangulations is isomorphic not only to the subposet of the weak order induced by twisted Baxter permutations, but also to the subposet of the weak order induced by Baxter permutations.
\emph{A priori} it is not clear that the latter is even a lattice.

The constructions presented here are informed by previous research on objects counted by the Baxter number, including in particular \cite{ABP,DGStack,FFNO}, but our proofs do not depend on any of these previous results.
One part of our argument (the proof of Lemma~\ref{Rec to Bax}) is, however, borrowed from \cite{FFNO}.
For the most part, our proofs fall naturally out of the lattice-theoretic approach to Hopf algebras found in~\cite{con_app}, and out of  detailed combinatorial/polytopal investigations, found in~\cite{cambrian}, of maps from permutations to triangulations.

In a paper~\cite{Giraudo} that appeared on the arXiv just after the present paper, Giraudo rediscovers the Hopf algebra tBax from~\cite{con_app} and studies tBax via twin binary trees (see Remark~\ref{credit remark}) and Baxter permutations.
Among the results of~\cite{Giraudo} that are not in the present paper are a multiplicative basis for Bax, freeness, cofreeness, self-duality, a bidendriform structure, and primitive elements.
Among the results of the present paper that are not in~\cite{Giraudo} are the intrinsic description of the coproduct on dRec, the intrinsic description of the product on dRec in terms of the defining basis, a complete description of cover relations in the lattice of diagonal rectangulations, the polytope construction, and the direct bijection between Baxter permutations and twisted Baxter permutations.

\section{The Hopf algebra MR of permutations}\label{MR sec}

In this section, we describe the Malvenuto-Reutenauer Hopf algebra MR, quoting results of~\cite{Malvenuto,MR}.
For more details on Hopf algebras in general, see \cite{MM,Sweedler}.

\subsection{Product and coproduct}\label{MR subsec}
Let $\K$ be a field.
For each integer $n\ge 0$, let $\K[S_n]$ be an $(n!)$-dimensional vector space over $\K$ with basis indexed by the elements of the symmetric group $S_n$.
For notational convenience, we replace each basis element with the permutation indexing it.  
Thus the elements of $\K[S_n]$ are represented as $\K$-linear combinations of permutations in $S_n$.
The Malvenuto-Reutenauer Hopf algebra of permutations is the graded vector space $\K[S_\infty]=\bigoplus_{n\ge 0}\K[S_n]$, endowed with a product and coproduct that we now define.

Let $p$,~$q$ and $n$ be nonnegative integers with $p+q=n$.
Let $x=x_1x_2\cdots x_p\in S_p$ and $y=y_1y_2\cdots y_q\in S_q$.
A \emph{shifted shuffle} of $x$ and $y$ is a permutation $z=z_1\cdots z_n\in S_n$ such that
\begin{enumerate}
\item[(i)]  the subsequence of $z$ consisting of the values $1,2,\ldots,p$ equals the sequence $x_1x_2\cdots x_p$, and 
\item[(ii) ] the subsequence of $z$ consisting of the values $p+1,p+2,\ldots,n$ equals the sequence $y_1+p,y_2+p,\ldots,y_q+p$.
\end{enumerate}
Said another way, to construct a shifted shuffle of $x$ and $y$, one first ``shifts'' the entries of $y$ up by $p$, and then ``shuffles'' the sequences $x_1,x_2,\ldots,x_p$ and $y_1+p,y_2+p,\ldots,y_q+p$ together, preserving the relative order of elements within each sequence.
Because of the shifting, this process is not symmetric in $x$ and $y$.
There are $\binom{n}{p}$ shifted shuffles of $x$ and $y$.

The product $x\bullet_Sy$ is the sum of all of the shifted shuffles of $x$ and $y$.
(That is, a linear combination with all coefficients equal to one.)
Thus, for example, 
\begin{align*}
21\bullet_S 21&=2143+2413 +2431+4213+4231+4321.\\
21\bullet_S 132&=21354 + 23154+23514+23541\\& \qquad+ \,32154 +32514+32541+35214+35241+35421.
\end{align*}
We can think of the product as a map of graded vector spaces, from $\K[S_\infty]\otimes\K[S_\infty]$ to $\K[S_\infty]$, taking a basis element $x\otimes y$ to $x\bullet_S y$.
This is a \emph{graded product} on $\K[S_\infty]$, in the sense that, for each $p$ and~$q$, it restricts to a map from $\K[S_p]\otimes\K[S_q]$ to $\K[S_{p+q}]$.

The \emph{standardization} of a sequence $a_1,a_2,\ldots,a_k$ of distinct integers is the unique permutation $x\in S_k$ such that each pair $i\neq j$ in $[k]$ has $x_i<x_j$ if and only if $a_i<a_j$.
The notation $\std(a_1,a_2,\ldots,a_k)$ stands for the standardization of $a_1,a_2,\ldots,a_k$.
For example, $\std(7,3,2,11,5)=42153$.

The coproduct $\Delta_S$ in $\K[S_\infty]$ maps $x=x_1x_2\cdots x_n\in S_n$ to 
\[\Delta_S(x)=\sum_{i=0}^n\std(x_1,\ldots,x_i)\otimes\std(x_{i+1},\ldots,x_n).\]
The notations $x_1,\ldots,x_0$ and $x_{n+1},\ldots,x_n$ both stand for the empty sequence, so $std(x_1,\ldots,x_0)$ and $\std(x_{n+1},\ldots,x_n)$ both denote the empty permutation $\emptyset\in S_0$.
Thus, for example, 
\begin{align*}
\Delta_S(563241)&=\emptyset\otimes563241+1\otimes53241+12\otimes3241\\
	&\qquad+\,231\otimes231+3421\otimes21+45213\otimes1+563241\otimes\emptyset.
\end{align*}
This map of graded vector spaces from $\K[S_\infty]$ to $\K[S_\infty]\otimes\K[S_\infty]$ is a \emph{graded coproduct}, because it restricts to a map from $\K[S_n]$ to $\sum_{p+q=n}\K[S_p]\otimes\K[S_q]$.

The graded vector space $\K[S_\infty]$, endowed with the product $\bullet_S$ and coproduct $\Delta_S$, are a \emph{graded Hopf algebra} $(\K[S_\infty],\bullet_S,\Delta_S)$, meaning that the product satisfies the axioms of a graded algebra, the coproduct satisfies the axioms of a graded coalgebra (which are dual to the axioms of an algebra), and the product and coproduct satisfy a certain compatibility condition.  
This is the Malvenuto-Reutenauer Hopf algebra of permutations, which we refer to as MR.

\subsection{Automorphisms and antiautomorphisms}\label{auto sec}
An important theme running through the study of MR is the weak order.
The weak order on permutations is the partial order whose cover relations are described as follows:  Given $x=x_1\cdots x_n\in S_n$, one moves up by a cover by swapping two  adjacent entries $x_i$ and $x_{i+1}$ with $x_i<x_{i+1}$.
(That is, the higher permutation in the covering pair agrees with $x$, except that the values $x_i$ and $x_{i+1}$, which occur in $x$ in numerical order, are swapped so as to occur in the higher permutation out of numerical order.)
The weak order on $S_4$ is illustrated in Figure~\ref{S4 weak fig}.

\begin{figure}[t]
\centerline{\scalebox{.4}{\includegraphics{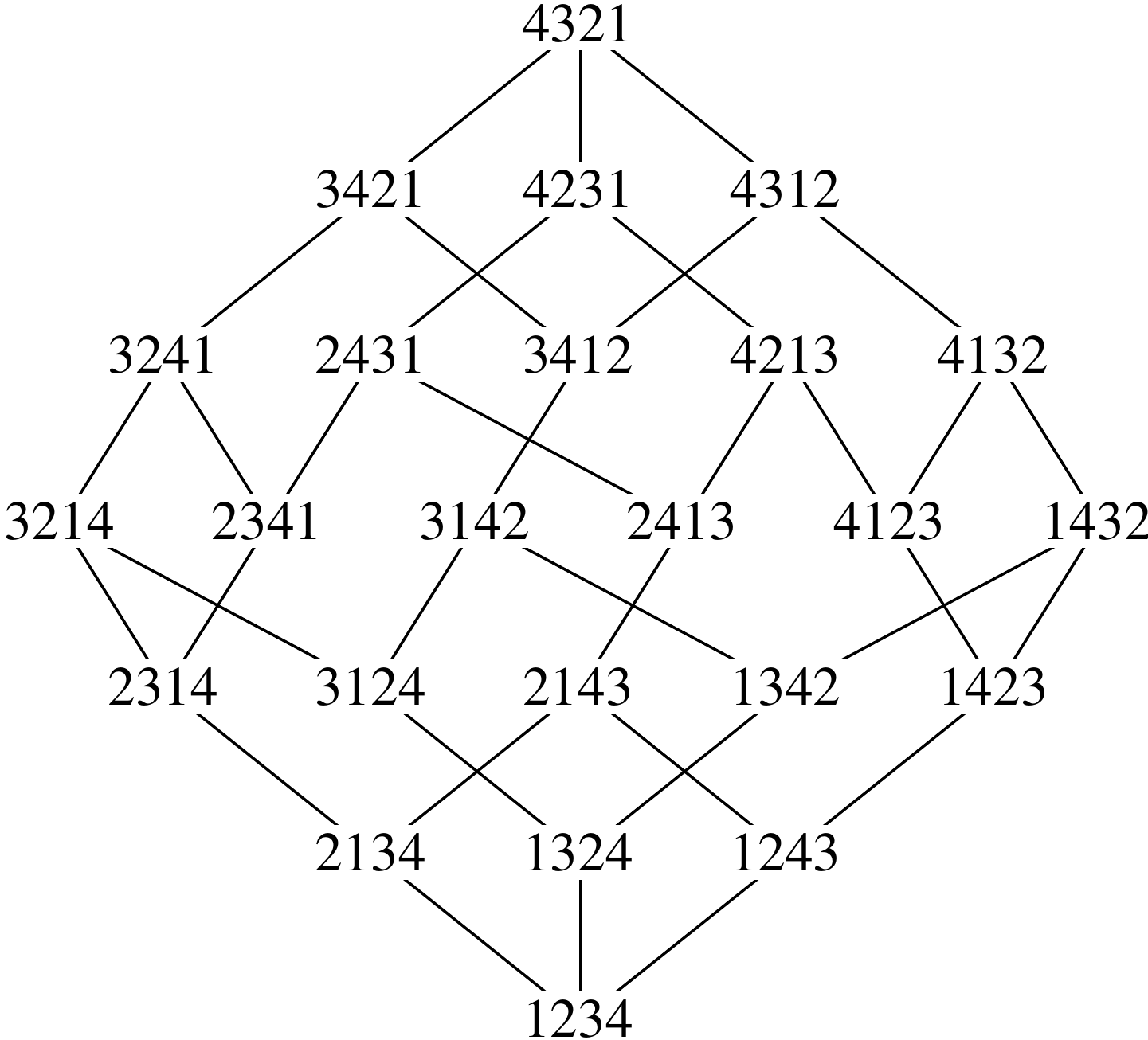}}}
\caption{The weak order on $S_4$}
\label{S4 weak fig}
\end{figure}

The importance of the weak order to the Malvenuto-Reutenauer Hopf algebra begins with the observation \cite[Theorem~4.1]{LRorder} that the product $x\bullet_S y$ is the sum over all permutations in a certain interval in the weak order on $S_{p+q}$.
The connection to the weak order also leads us to consider how automorphisms and antiautomorphisms of the weak order interact with the product and coproduct.

Borrowing from the terminology of Coxeter groups, for each $n$, let $w_0$ be the permutation $n(n-1)\cdots 21$, which is the unique maximal element of the weak order.
The map $x\mapsto w_0\cdot x$ is an antiautomorphism of the weak order.
(Here, ``$\cdot$'' is the usual composition product of permutations, and we follow the convention of composing permutations as functions, from right to left.)
We write $\rv$ for this map, because the effect of the map is to reverse the values of the one-line notation for $x$, by replacing each value $x_i$ by $n+1-x_i$.
For example, $\rv(625431)=654321\cdot 625431=152346$.
Extending the map $\rv$ by linearity to a map on $\K[S_\infty]$. we obtain an algebra antiautomorphism (\emph{i.e.}\ an antiautomorphism of the product) and a coalgebra automorphism (\emph{i.e.}\ an automorphism of the coproduct) of MR.
The following examples are representative:
\begin{align*}
\rv(12)\bullet_S\rv(312)&=21\bullet_S132\\
&=21354+23154+23514+23541\\&\qquad+\,32154+32514+32541+35214+35241+35421\\
&=\rv(45312+43512+43152+43125\\&\qquad+\,34512+34152+34125+31452+31425+31245)\\
&=\rv(312\bullet_S12).
\end{align*}
\begin{align*}
\Delta_S(\rv(214536))&=\Delta_S(563241)\\
&=\emptyset\otimes563241+1\otimes53241+12\otimes3241\\
	&\qquad+\,231\otimes231+3421\otimes21+45213\otimes1+563241\otimes\emptyset\\
&=(\rv\otimes\rv)(\emptyset\otimes214536+1\otimes13425+21\otimes2314\\
	&\qquad+\,213\otimes213+2134\otimes12+21453\otimes1+214536\otimes\emptyset)\\
&=(\rv\otimes\rv)(\Delta_S(214536)).
\end{align*}

The map $x\mapsto x\cdot w_0$ is also an antiautomorphism of the weak order.
We write $\rp$ for this map, because the map reverses positions of the entries of the one-line notation for $x$.
For example, $\rp(624531)=624531\cdot 654321=135426$.
Extending by linearity, we obtain an algebra automorphism and a coalgebra antiautomorphism of MR, as illustrated by the following representative examples:
\begin{align*}
\rp(12)\bullet_S\rp(231)&=21\bullet_S132\\
&=21354+23154+23514+23541\\&\qquad+\,32154+32514+32541+35214+35241+35421\\
&=\rp(45312+45132+41532+14532\\&\qquad+\,45123+41523+14523+41253+14253+12453)\\
&=\rp(12\bullet_S231).
\end{align*}
\begin{align*}
\Delta_S(\rp(142365))&=\Delta_S(563241)\\
&=\emptyset\otimes563241+1\otimes53241+12\otimes3241\\
	&\qquad+\,231\otimes231+3421\otimes21+45213\otimes1+563241\otimes\emptyset\\
&=(\rp\otimes\rp)(\emptyset\otimes142365+1\otimes14235+21\otimes1423\\
	&\qquad+\,132\otimes132+1243\otimes12+31254\otimes1+142365\otimes\emptyset)\\
&=\tw\circ(\rp\otimes\rp)(\Delta_S(142365)),
\end{align*}
where $\tw$ is the linear map that takes $a\otimes b$ to $b\otimes a$.
The maps $\rv$ and $\rp$ commute, and the composition $\rv\circ\rp$ is an antiautomorphism of MR (as a graded Hopf algebra, \emph{i.e.}\ both as an algebra and as a coalgebra).

\subsection{Self-duality}\label{self sec}
The weak order defined above is more specifically known as the \emph{right weak order}.
There is also a \emph{left weak order} on permutations.
The right and left weak orders are isomorphic via the involution $x\mapsto x^{-1}$.
For the purposes of this paper, we take this isomorphism as the definition of the left weak order.
We also use the term ``weak order'' as an abbreviation for ``right weak order.''
The Hopf algebra MR is \emph{self-dual}.
More specifically, the map $\inv:x\mapsto x^{-1}$ extends linearly to an isomorphism \cite{Malvenuto} between MR and its graded dual MR$^*$.
We now explain what this means.

The graded dual of a graded vector space $V=\bigoplus_{n\ge 0}V_n$ is the graded vector space $V^*=\bigoplus_{n\ge 0}V^*_n$, where each $V^*_n$ is the dual vector space to $V_n$.
If each $V_n$ is finite, then we can identify $V^*_n$ with $V_n$, and thus identify $V^*$ with $V$.
If $V$ furthermore has the structure of a graded Hopf algebra, then $V^*$ is also a graded Hopf algebra, the \emph{graded dual} of $V$.
We define the graded dual Hopf algebra in the context of $\K[S_\infty]$, using a permutation $x$ as a shorthand both for the basis element indexed by $x$ and for the dual basis element indexed by $x$.

Dual to the product $\bullet_S$ (a map from $\K[S_\infty]\otimes\K[S_\infty]$ to $\K[S_\infty]$) is a coproduct $\bullet_S^*$ (a map from $\K[S_\infty]$ to $\K[S_\infty]\otimes\K[S_\infty]$) defined as follows:
Given permutations $z\in S_n$, $x\in S_p$ and $y\in S_q$, the coefficient of $x\otimes y$ in $\bullet_S^*(z)$ equals the coefficient of $z$ in $x\bullet_S y$.
Similarly, there is a product $\Delta_S^*$ dual to the coproduct $\Delta_S$ such that the coefficient of $z$ in $x\Delta_S^*y$ equals the coefficient of $x\otimes y$ in $\Delta_S(z)$.

It is immediate that the map $\inv$ extends linearly to an automorphism of graded vector spaces of $\K[S_\infty]$.
To see that this is an isomorphism between the graded Hopf algebra MR$=(\K[S_\infty],\bullet_S,\Delta_S)$ and its graded dual MR$^*=(\K[S_\infty],\Delta_S^*,\bullet_S^*)$, we must check that $(\inv\otimes\inv)\circ\Delta_S=(\bullet_S^*)\circ\inv$ and that $\inv\circ(\bullet_S)=\Delta_S^*\circ(\inv\otimes\inv)$.
Both conditions amount to checking that the coefficient of $x\otimes y$ in $\Delta_S(z)$ equals the coefficient of $z^{-1}$ in $x^{-1}\bullet_Sy^{-1}$ for all $x$, $y$, and $z$.
This is shown, for example in \cite[Theorem~9.1]{AgSoMR}. 

\begin{remark}\label{ours and MR's}
The original definition of \cite{Malvenuto,MR} constructs what we are calling the dual Hopf algebra MR$^*$.
We prefer the definition given here because it allows us to work with the right weak order, rather than translating the results of \cite{congruence,con_app,cambrian} into results about the weak order.
The difference in the two definitions is minor, in the sense that one can switch definitions by replacing all permutations by their inverses.
\end{remark}

\section{Some Hopf subalgebras and quotients of MR}\label{sub Hopf sec}

Many important combinatorial Hopf algebras are Hopf subalgebras or quotients of MR.
One of the most important is the Hopf algebra of Noncommutative symmetric functions~\cite{GKLLRT}, which can be defined on a graded vector space with basis elements at degree $n$ indexed by subsets of $[n-1]$.
Another important Hopf subalgebra of MR is the Loday-Ronco Hopf algebra, defined on a graded vector space with basis elements indexed by planar binary trees.
This is isomorphic~\cite{AgSoLR,Foissy,Holtkamp,Palacios} to the noncommutative Connes-Kreimer Hopf algebra of~\cite[Section~5]{Foissy}.

In this section, we describe these Hopf subalgebras, with particular emphasis on the Loday-Ronco Hopf algebra.
Our description is fairly detailed, for two reasons:  First, the detailed description gives context to the discussion of the Hopf algebra tBax of twisted Baxter permutations/Hopf algebra dRec of diagonal rectangulations discussed in Sections~\ref{tBax Hopf sec} and~\ref{rec sec}.
Second, many constructions and definitions developed in this section are used directly, in the later sections, in analyzing tBax and dRec.  

None of the results in this section are new, but they are presented in a way particularly suited to our purposes. 
For different accounts of the same material, see, for example, \cite{AgSoLR,LR,LRorder}.

\subsection{The Hopf algebras LR and LR$'$}\label{lr sec}
A \emph{planar binary tree} is a rooted tree (a tree with a distinguished vertex called the~\emph{root}) in which every vertex either has no children or has two children, one designated as a \emph{right} child and one as a \emph{left child}.
Vertices with no children are called \emph{leaves}.
For the moment, we draw the trees with their leaves lined up horizontally and the root above them, with edges drawn so that, for each leaf, the path from that leaf to the root is monotone up.
We call planar binary trees drawn in this manner \emph{top trees}, for reasons that are apparent in Section~\ref{bij sec}.
Let $\PBT^t_n$ be the set of top planar binary trees with $n+1$ leaves.
For example, the five trees of $\PBT^t_3$ are shown in Figure~\ref{PBTt3}.

\begin{figure}[t]
\begin{tabular}{ccccc}
\includegraphics{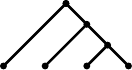}&
\includegraphics{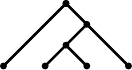}&
\includegraphics{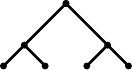}&
\includegraphics{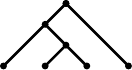}&
\includegraphics{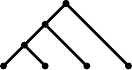}
\end{tabular}
\caption{The five top planar binary trees in $\PBT^t_3$.}
\label{PBTt3}
\end{figure}

To realize the Loday-Ronco Hopf algebra of planar binary trees as a Hopf subalgebra of MR, we define a map $\rho_t:S_n\to\PBT^t_n$.
Versions of this map have appeared in many papers, including \cite{Iterated,NonpureII,LR,LRorder,cambrian,Tonks}.
Let $x=x_1x_2\cdots x_n\in S_n$.
Write $n+1$ points on a horizontal line and label the spaces between points $1,2,\ldots,n$, from left to right.
The map $\rho_t$ constructs the tree starting at the leaves and uses each entry of $x$ to combine leaves into subtrees, as follows.
The two leaves adjacent to the label $x_1$ are first joined to a vertex between and above them.
The number $x_1-1$ is then thought of as labeling the space between the new vertex and the leaf to its left.
Similarly, $x_1+1$ labels the space between the new vertex and the leaf to its right.
The process continues until the last entry of $x$ is read, causing the addition of a final new vertex,  the root.
Figure~\ref{rho t fig} illustrates the steps in the construction of $\rho_t(x)$ when $x=467198352$.

\begin{figure}[t]
\begin{tabular}{ccc}
\scalebox{.95}{\includegraphics{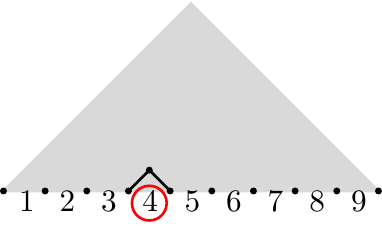}}&
\scalebox{.95}{\includegraphics{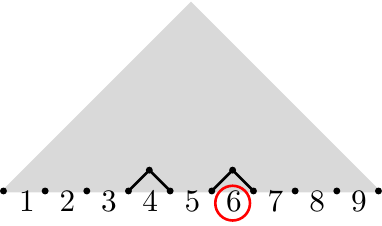}}&
\scalebox{.95}{\includegraphics{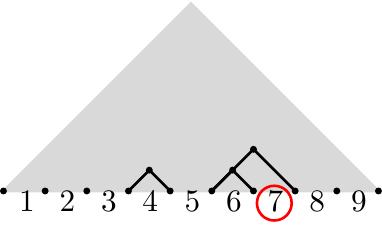}}\\[10 pt]
\scalebox{.95}{\includegraphics{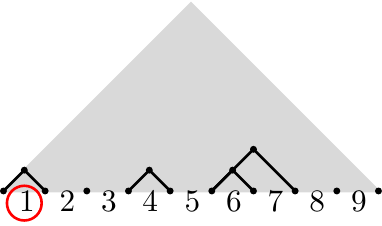}}&
\scalebox{.95}{\includegraphics{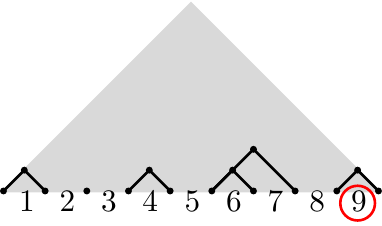}}&
\scalebox{.95}{\includegraphics{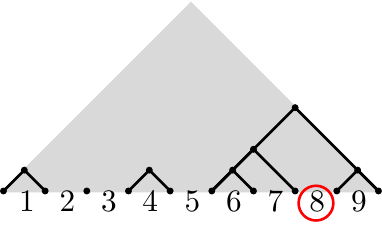}}\\[10 pt]
\scalebox{.95}{\includegraphics{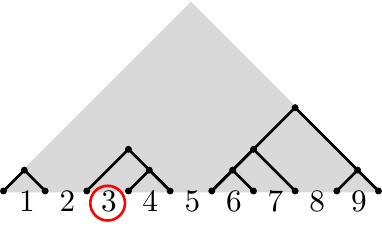}}&
\scalebox{.95}{\includegraphics{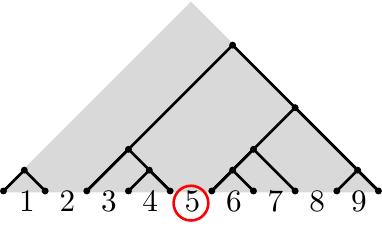}}&
\scalebox{.95}{\includegraphics{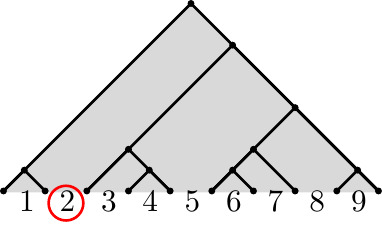}}
\end{tabular}
\caption{Steps in the construction of $\rho_t(467198352)$}
\label{rho t fig}
\end{figure}

For any (top) planar binary tree $T\in\PBT^t_n$, let $c(T)$ be the following sum in $\K[S_\infty]$:
\[c(T)=\sum_{\substack{x\in S_n\\\rho_t(x)=T}}x.\]
The elements of the form $c(T)$ constitute a graded basis for a graded Hopf subalgebra of MR, called the Loday-Ronco Hopf algebra LR.
The Hopf algebra LR can be viewed as a Hopf algebra on the graded vector space $\K[\PBT^t_\infty]=\bigoplus_{n\ge 0}\K[\PBT^t_n]$.

\begin{remark}\label{ours and LR's}
In~\cite{LR,LRorder}, the dual definition of MR is taken.
(See Remark~\ref{ours and MR's}.)
The construction of LR in \cite{LR} proceeds via a map $\psi$ which sends a permutation $x$ to the reflection, through a horizontal line, of the tree $\rho_t(\inv(x))$.
\end{remark}

There is an anti-isomorphic copy LR$'$ of LR that can be realized in MR in a similar way.
To construct this anti-isomorphic copy, we now draw \emph{bottom} planar binary trees.
That is, we draw the trees with the leaves lined up horizontally and the root below them, with edges drawn so that, for each leaf, the path from that leaf to the root is monotone down.
The notation $\PBT^b_n$ stands for the set of bottom planar binary trees with $n+1$ leaves.
Now LR$'$ is constructed in a similar manner to the construction of LR, using a different map $\rho_b$ from permutations to trees.
Once again, write $n+1$ points on a horizontal line and label the spaces between points $1,2,\ldots,n$ from left to right.
We may as well, right from the start, draw a straight line from the root to the leftmost leaf, and a straight line from the root to the rightmost leaf.
A planar binary tree has a uniquely defined \emph{left subtree} and \emph{right subtree}.
The tree $\rho_b(x)$ is constructed so that $x_1$ labels the unique space between the two subtrees.
Then $x_2$ labels the space between two \textbf{sub}-subtrees, \emph{etc}.
Step by step construction of $\rho_b(x)$ in the case $x=467198352\in S_9$ is illustrated in Figure~\ref{rho b fig}.

\begin{figure}[t]
\begin{tabular}{ccc}
\scalebox{.95}{\includegraphics{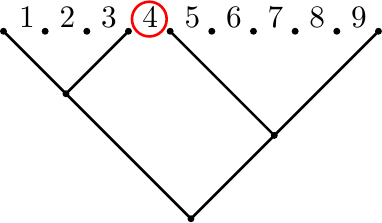}}&
\scalebox{.95}{\includegraphics{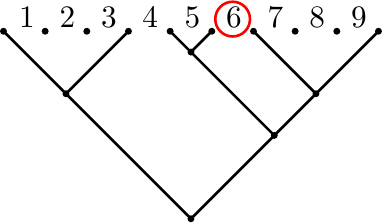}}&
\scalebox{.95}{\includegraphics{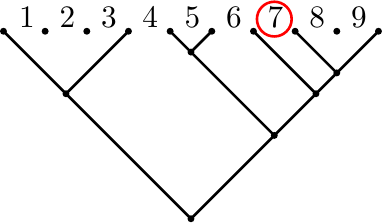}}\\[10 pt]
\scalebox{.95}{\includegraphics{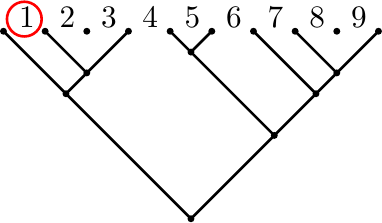}}&
\scalebox{.95}{\includegraphics{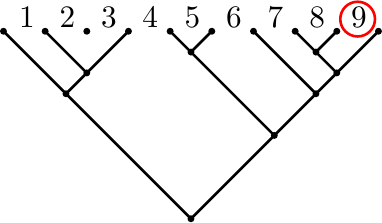}}&
\scalebox{.95}{\includegraphics{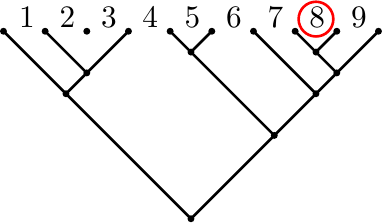}}\\[10 pt]
\scalebox{.95}{\includegraphics{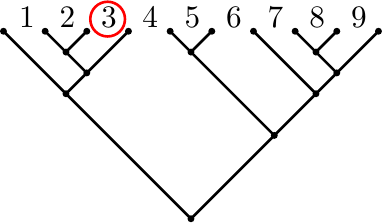}}&
\scalebox{.95}{\includegraphics{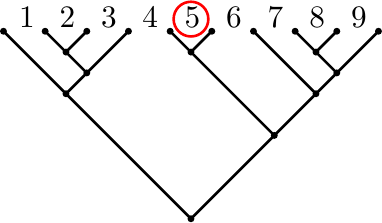}}&
\scalebox{.95}{\includegraphics{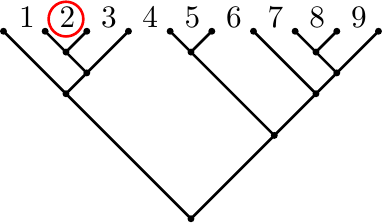}}
\end{tabular}
\caption{Steps in the construction of $\rho_b(467198352)$}
\label{rho b fig}
\end{figure}

Define for any bottom planar binary tree $T\in\PBT^b_n$ an element $c'(T)\in\K[S_\infty]$:
\[c'(T)=\sum_{\substack{x\in S_n\\\rho_b(x)=T}}x.\]
The elements of the form $c'(T)$ form the graded basis of a graded Hopf subalgebra LR$'$ of MR.
We view LR$'$ as a Hopf algebra on the graded vector space $\K[\PBT^b_\infty]=\bigoplus_{n\ge 0}\K[\PBT^b_\infty]$.
The anti-isomorphism between LR and LR$'$ is described in Section~\ref{pattern sec}.

\subsection{A pattern-avoidance description}\label{pattern sec}
In \cite[Section~4]{cambrian}, a map $\eta$ is described from permutations in $S_n$ to triangulations of a convex $(n+2)$-gon $Q$.
(When reading \cite[Section~4]{cambrian} solely for the definition of $\eta$, one may safely skip any paragraph containing the phrase ``fiber polytope'' and take the few remaining paragraphs as a description of $Q$ and $\eta$.)
The vertices of $Q$ are labeled $0$ through $n+1$, with certain conditions on the configuration of the vertices, and the map $\eta$ depends on the labeling/configuration.
We are first interested in a specific choice of $Q$ (and below, in another specific choice).
Namely, take the vertices $0$ and $n+1$ to form a horizontal edge of $Q$, with all of the remaining vertices above that edge, with labels increasing left to right.
The combinatorics of this choice is summarized, in the notation of \cite[Section~4]{cambrian}, by putting an upper-bar on each index in $[n]$.

Given a permutation $x\in S_n$, one readily verifies from the definition of $\eta$ that the dual planar binary tree to $\eta(x)$, when properly ``straightened out,'' is $\rho_b(x)$.
For an example (and as an illustration of the notion of a dual tree), compare Figure~\ref{eta rho b} to Figure~\ref{rho b fig}.
Since a triangulation of $Q$ is determined uniquely by its dual tree, we can rephrase (and specialize) results on $\eta$ from \cite[Section~5]{cambrian} to obtain a characterization of the fibers of $\rho_b$.

\begin{figure}[t]
\scalebox{1.1}{\includegraphics{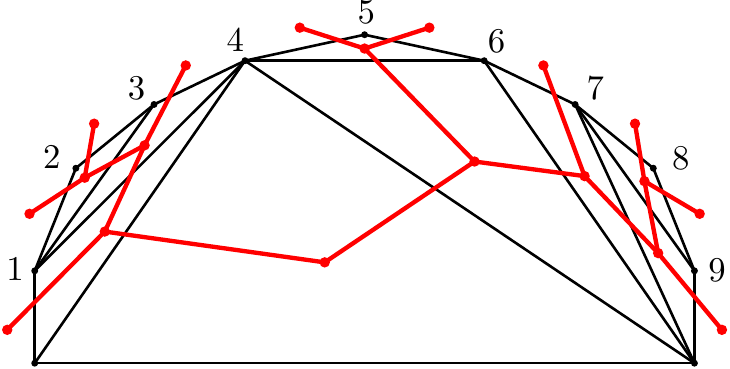}}
\caption{$\eta(467198352)$ and its dual tree $\rho_b(467198352)$}
\label{eta rho b}
\end{figure}

This characterization uses the fact that the weak order on $S_n$ is a lattice.
A congruence on a lattice is an equivalence relation respecting meet and join in the same sense that a congruence of rings respects addition and multiplication.
The proposition below follows from \cite[Theorem~5.1]{cambrian}.

\begin{prop}\label{rho b cong}
The fibers of $\rho_b$ are the congruence classes of a congruence on the weak order on $S_n$.
\end{prop}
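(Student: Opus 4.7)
The plan is to reduce the statement to \cite[Theorem~5.1]{cambrian} by exploiting the identification, already sketched in the paragraph immediately preceding the proposition, between $\rho_b(x)$ and the dual tree of $\eta(x)$. Concretely, I would proceed in three steps. First, verify carefully that for every $x\in S_n$ the dual planar binary tree of the triangulation $\eta(x)$ of $Q$, oriented so that the edge $\{0,n+1\}$ lies at the bottom, coincides with the bottom planar binary tree $\rho_b(x)$. Second, observe that a triangulation of a convex polygon is uniquely determined by its dual tree, so the fibers of $\rho_b$ coincide with the fibers of $\eta$. Third, invoke \cite[Theorem~5.1]{cambrian}, which states precisely that the fibers of $\eta$ are the congruence classes of a lattice congruence on the weak order on $S_n$.

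Step~1 is best carried out by induction on $n$, matching the recursive structure of both maps. In the construction of $\rho_b(x)$, the first entry $x_1$ labels the unique space between the left subtree and the right subtree of the root; the subsequent entries, partitioned according to whether they lie in the intervals $[1,x_1-1]$ or $[x_1+1,n]$, then recursively build those two subtrees. On the triangulation side, the edge of $Q$ from vertex $0$ to vertex $x_1$ and the edge from vertex $x_1$ to vertex $n+1$ cut $Q$ into two smaller polygons together with a single triangle meeting the base; this triangle corresponds to the root of the dual tree, and inductively the two smaller polygons carry triangulations dual to the two subtrees of $\rho_b(x)$. The example in Figure~\ref{eta rho b} illustrates exactly this correspondence, so the induction is largely a bookkeeping exercise.

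The main obstacle is Step~1, and it is essentially a matter of reconciling conventions: the definition of $\eta$ in \cite[Section~4]{cambrian} is phrased in terms of a labelled configuration of vertices of $Q$ and a sweeping/fiber-polytope construction, whereas the recursive description of $\rho_b$ above is presented purely combinatorially. One must be careful that the choice of $Q$ specified in the paragraph preceding the proposition (vertices $0$ and $n+1$ on a horizontal edge below, remaining vertices above with labels increasing left to right) matches the left/right/bottom conventions chosen here for drawing $\rho_b(x)$, and in particular that no unintended reflection or rotation is introduced. Once this identification is pinned down, Steps~2 and~3 are immediate consequences of the quoted results.
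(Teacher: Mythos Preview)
Your proposal is correct and follows exactly the approach the paper takes: the paper simply states that the proposition ``follows from \cite[Theorem~5.1]{cambrian},'' relying on the identification (established in the preceding paragraph) of $\rho_b(x)$ with the dual tree of $\eta(x)$ and the fact that a triangulation is determined by its dual tree. Your three-step outline is a faithful expansion of that one-line citation, and the care you take in Step~1 about matching conventions is exactly the ``bookkeeping'' the paper leaves to the reader.
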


By a general fact on congruences of finite lattices (see, for example, \cite[Section~2]{con_app}), Proposition~\ref{rho b cong} implies that each fiber of $\rho_b$ is an interval in the weak order.
In particular, each fiber has a unique minimal representative and a unique maximal representative.
These representatives can be described by \emph{pattern avoidance}.
For $1\le k \le n$, let $x=x_{1}x_{2}\cdots x_{k}$ be a permutation in $S_{k}$ and let $y=y_1y_2\cdots y_n$ be a permutation in $S_{n}$.
We say $y$ contains the pattern $x_1$--\,$x_2$--\,$\cdots$--\,$x_k$ if there exist indices $1\le i_{1}<i_{2}< \cdots < i_{k}\le n$ such that $y_{i_{j}}<y_{i_{l}}$ if and only if $x_{j}<x_{l}$ for all $j,l \in [k]$.  
Otherwise we say $y$ \emph{avoids} the pattern $x_1$--\,$x_2$--\,$\cdots$--\,$x_k$.  
Here, we are using the generalized pattern notation of~\cite{Gen Pat}.
In the more common notation, the dashes in $x_1$--\,$x_2$--\,$\cdots$--\,$x_k$ are omitted and one speaks of $y$ containing or avoiding $x_1x_2\cdots x_k$.
The reasons for this more involved notation are explained later, in Section~\ref{tBax sec}.
See \cite{Wilf} for more details on pattern avoidance.
The proposition below follows from \cite[Proposition~5.7]{cambrian}.

\begin{prop}\label{rho b avoid}

\noindent
\begin{enumerate}
\item A permutation is minimal in its $\rho_b$-fiber if and only if it avoids $2$-$3$-$1$.
\item A permutation is maximal in its $\rho_b$-fiber if and only if it avoids $2$-$1$-$3$.
\end{enumerate}
\end{prop}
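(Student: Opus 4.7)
The plan is to leverage Proposition~\ref{rho b cong}: since each $\rho_b$-fiber is an interval in the weak order, a permutation $x$ is minimal (resp.\ maximal) in its fiber if and only if no cover relation descending from (resp.\ ascending to) $x$ stays inside the fiber. So part~(1) reduces to the local question: given $x \in S_n$ with $x_i > x_{i+1}$, when does swapping those two entries to produce $y$ yield $\rho_b(x) = \rho_b(y)$? Part~(2) is entirely dual.

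The central lemma I would establish is: such a swap preserves $\rho_b$ if and only if there exists $j < i$ with $x_{i+1} < x_j < x_i$. I would prove this by induction on $n$ using the recursive description of $\rho_b$: the first entry $x_1$ becomes the root, the left subtree is $\rho_b$ applied to the standardization of the subsequence of $x_2 \cdots x_n$ with values less than $x_1$, and the right subtree similarly for values greater than $x_1$. If $i = 1$ the root itself changes, so $\rho_b(x) \neq \rho_b(y)$, and no earlier entry exists. If $i \geq 2$ and both $x_i, x_{i+1}$ lie on the same side of $x_1$ in value, then the swap descends to an adjacent swap inside one of the subtree permutations, and the inductive hypothesis gives the lemma: a witness in the subtree pulls back to a witness in $x$ on the same side of $x_1$. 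If $x_{i+1} < x_1 < x_i$, then $x_i$ and $x_{i+1}$ go to different subtrees, a direct check shows that both subtree permutations are unchanged by the swap, so $\rho_b(x) = \rho_b(y)$, and $x_1$ itself serves as the witness with $j = 1$.

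With the lemma in hand, part~(1) follows. One direction is immediate: any witness $(j, i, i+1)$ is precisely an occurrence of the pattern $2$-$3$-$1$, so if $x$ is not minimal it contains $2$-$3$-$1$. For the converse, suppose $x$ contains $2$-$3$-$1$ via some $j < k < l$ with $x_l < x_j < x_k$. Take $i$ to be the largest index in $[k, l-1]$ with $x_j < x_i$; such an $i$ exists because $k$ is in the set, and maximality forces $x_{i+1} < x_j$ (using $x_l < x_j$ when $i+1 = l$). Then $(j, i, i+1)$ satisfies $x_{i+1} < x_j < x_i$, i.e., an adjacent $2$-$3$-$1$ at the descent $(i, i+1)$, and the lemma makes $x$ non-minimal. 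Part~(2) is the mirror argument: a swap at an ascent $x_i < x_{i+1}$ preserves $\rho_b$ iff some $j < i$ has $x_i < x_j < x_{i+1}$, i.e., a $2$-$1$-$3$ pattern, and the same filling-in trick promotes any $2$-$1$-$3$ instance to an adjacent one.

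The main obstacle I anticipate is the split case of the inductive lemma: verifying carefully that when $x_i$ and $x_{i+1}$ straddle $x_1$ in value, the adjacent swap commutes with the two independent restrictions to values below and above $x_1$, so that both subtree permutations are genuinely unchanged. Once that bookkeeping is done, the rest of the argument is a clean induction combined with the combinatorial choice of $i$ that promotes an arbitrary forbidden pattern to an adjacent one.
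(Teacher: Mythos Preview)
Your argument is correct. Your ``central lemma'' is exactly Proposition~\ref{rho b move}, and your inductive proof of it via the recursive structure of $\rho_b$ (first entry determines the root split, values below and above $x_1$ recurse into the left and right subtrees) goes through cleanly; the bookkeeping in the split case $x_{i+1}<x_1<x_i$ is straightforward once one observes that the two restricted subsequences are literally unchanged by the swap. Your promotion of an arbitrary $2$-$3$-$1$ instance to one with the ``$3$'' and ``$1$'' adjacent is also correct and is the standard trick.

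The paper takes a different route: it does not argue directly but imports both Proposition~\ref{rho b avoid} and Proposition~\ref{rho b move} as specializations of results in~\cite{cambrian} about the map $\eta$ to triangulations, via the identification of $\rho_b$ with a dual-tree version of $\eta$. Your approach has the advantage of being entirely self-contained and of proving Proposition~\ref{rho b move} along the way, at the cost of a short induction that the paper avoids by citation. The paper's route, by contrast, situates the result inside the general Cambrian-lattice framework, which is what later sections draw on anyway.
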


A complete description of the fibers is as follows:
Suppose $x\covered y$ in the weak order.   
Then there exist a pair of entries $a$ and $c$ in $y$ with $a$ immediately following $c$ in $y$ and $a<c$, such that $x$ agrees with $y$ everywhere except that $a$ and $c$ are swapped.
We say that $x$ is obtained from $y$ by a \emph{$(231\to213)$-move} if the entries $a$ and $c$ are the ``$1$'' and ``$3$'' in some $231$-pattern, or in other words, if there exists an entry $b$ with $a<b<c$ such that $b$ occurs somewhere left of $ca$ in $y$.
We also say that $y$ is obtained from $x$ by a \emph{$(213\to231)$-move}.
The proposition below follows from \cite[Proposition~5.3]{cambrian}.

\begin{prop}\label{rho b move}
Let $x\covered y$ in the weak order.
Then $\rho_b(x)=\rho_b(y)$ if and only if $x$ is obtained from $y$ by a $(231\to213)$-move.
\end{prop}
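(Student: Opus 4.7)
The plan is to recognize $\rho_b$ as the construction of a binary search tree (BST) by sequential left-to-right insertion, and then analyze when swapping two consecutive insertions preserves the resulting tree. First I would verify by induction on $k$ that the partial tree produced after processing $x_1, \ldots, x_k$ is precisely the BST obtained by inserting these entries into an initially empty tree, identifying the space labeled $v$ in the $\rho_b$ construction with the node of value $v$ in the BST. The inductive step amounts to observing that reading $x_{k+1}$ places a new internal vertex in the unique subtree of the current partial tree whose space labels include $x_{k+1}$, and this subtree is exactly the one reached by the BST insertion path of $x_{k+1}$ (going left at node $v$ when $x_{k+1} < v$ and right when $x_{k+1} > v$).

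Now fix $x \covered y$ with $y_i = c$, $y_{i+1} = a$ (and $a < c$), while $x$ agrees with $y$ except for the swap at these two positions. Let $T$ be the BST built from the common prefix $y_1 \cdots y_{i-1}$. Since $x$ and $y$ also agree after position $i+1$, we have $\rho_b(x) = \rho_b(y)$ if and only if inserting $a$ then $c$ into $T$ yields the same BST as inserting $c$ then $a$: the subsequent insertions depend only on the BST currently in hand, so equal trees at this stage yield equal final trees.

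The core lemma is then: the two insertion orders for $\{a, c\}$ produce the same BST if and only if some node of $T$ has value in the open interval $(a, c)$. For the ``if'' direction, let $v$ be the node of $T$ closest to the root with $a < v < c$; every strict ancestor of $v$ lies outside $[a, c]$, so $a$ and $c$ make identical choices at each such ancestor, and both insertion paths reach $v$. At $v$, $a$ turns left and $c$ turns right, so they are placed into disjoint subtrees of $v$, and the order of insertion is immaterial. For the ``only if'' direction, if no node of $T$ lies in $(a, c)$, then $a$ and $c$ make identical choices at every node they encounter, so their insertion paths never diverge; the second-inserted value becomes a descendant of the first, and the two orders yield distinct BSTs (one placing $c$ in the right subtree of $a$, the other placing $a$ in the left subtree of $c$).

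Finally, translating the lemma: a node of $T$ with value in $(a, c)$ corresponds exactly to an entry $b$ of $y$ with $a < b < c$ appearing in positions $1, \ldots, i-1$, which is precisely the condition defining a $(231 \to 213)$-move between $x$ and $y$. The main obstacle is the ``if'' direction of the lemma, where one must exhibit \emph{some} element of $T \cap (a, c)$ that actually lies on the common insertion path; focusing on the root-closest such element, and using the fact that its strict ancestors all lie outside $[a, c]$, resolves this cleanly.
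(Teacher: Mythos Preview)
Your argument is correct. The identification of $\rho_b$ with left-to-right binary search tree insertion is accurate (the paper's description places $x_1$ at the root split, then each subsequent $x_k$ in the appropriate subtree, which is exactly BST insertion), and your core lemma and its proof are sound. One small point worth tightening: when you pick ``the node of $T$ closest to the root with $a<v<c$,'' it is worth noting that such a node is unique and lies on the common insertion path of $a$ and $c$; this follows because any two distinct nodes of $T$ with values in $(a,c)$ have a common ancestor whose value is strictly between them, hence also in $(a,c)$, forcing a unique minimum-depth node, and the argument you give shows both $a$ and $c$ reach it.

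Your route differs from the paper's. The paper does not prove this proposition directly; it observes that $\rho_b$ is (dual to) a special case of the map $\eta$ from permutations to triangulations studied in~\cite{cambrian}, and then imports the result from \cite[Proposition~5.3]{cambrian}, which characterizes the covers collapsed by $\eta$ in the general Cambrian setting. That approach situates the statement within a uniform framework covering all Cambrian congruences (in particular yielding the companion results for $\rho_t$ by the same stroke), at the cost of invoking external machinery. Your BST argument is entirely self-contained and elementary, and is essentially the classical description of the sylvester congruence (cf.\ \cite{HNT}); it gives this particular case cleanly but would need to be redone for $\rho_t$ (by a symmetric argument) rather than obtained as another specialization of a single general theorem.
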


Let $\Theta_{231}$ be the congruence whose classes are the fibers of $\rho_b$.
Let $\pidown^{231}$ stand for the map which sends an element $x\in S_n$ to the minimal element of its $\Theta_{231}$-class.
Let $\piup_{231}$ be the map sending $x$ to the maximal element of its $\Theta_{231}$-class.
Propositions~\ref{rho b avoid} and~\ref{rho b move}, along with the fact that $\Theta_{231}$-classes are intervals, imply the following recursive characterizations of the maps $\pidown^{231}$ and $\piup_{231}$.

\begin{prop}\label{pidown 231 char}
Let $y\in S_n$.
If some permutation $x\in S_n$ can be obtained from~$y$ by a $(231\to213)$-move, then $\pidown^{231}(y)=\pidown^{231}(x)$.
Otherwise, $\pidown^{231}(y)=y$. 
\end{prop}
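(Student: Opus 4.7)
The plan is to combine Proposition~\ref{rho b move} with the structural fact, noted right after Proposition~\ref{rho b cong}, that every $\Theta_{231}$-class is an interval in the weak order on $S_n$. The first implication will follow from a single application of Proposition~\ref{rho b move}; the second will use the interval structure of $\Theta_{231}$-classes to produce a covering relation inside the class, which Proposition~\ref{rho b move} can then identify as a $(231\to213)$-move.

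For the first assertion, I would observe that if $x$ is obtained from $y$ by a $(231\to213)$-move, then by the very definition of the move one has $x\lessdot y$ in the weak order: the adjacent descent $c,a$ in $y$ is replaced by the adjacent ascent $a,c$ in $x$, with all other entries unchanged. Proposition~\ref{rho b move} then gives $\rho_b(x)=\rho_b(y)$, so $x$ and $y$ lie in the same $\Theta_{231}$-class, whence $\pidown^{231}(y)=\pidown^{231}(x)$.

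For the second assertion I would argue the contrapositive: if $y\neq\pidown^{231}(y)$, then some $(231\to213)$-move can be applied to $y$. Let $I$ denote the $\Theta_{231}$-class of $y$, an interval in the weak order. Since $\pidown^{231}(y)<y$ in this interval, there is a saturated chain inside $I$ from $\pidown^{231}(y)$ up to $y$. Its penultimate element $x$ satisfies $x\lessdot y$ inside $I$, and hence inside $S_n$, because cover relations in any interval of a poset coincide with cover relations in the ambient poset. As $x\in I$ we have $\rho_b(x)=\rho_b(y)$, and Proposition~\ref{rho b move} then forces $x$ to be obtained from $y$ by a $(231\to213)$-move.

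The only real obstacle I anticipate is notational bookkeeping: one must invoke the ambient fact that $\Theta_{231}$-classes are intervals (rather than merely congruence classes) and must explicitly note that covers restrict from the weak order to an interval. There is no new combinatorial content to be extracted; once Proposition~\ref{rho b move} is in hand the recursive characterization is essentially a formal consequence.
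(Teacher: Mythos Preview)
Your proposal is correct and follows essentially the route the paper indicates: the paper does not write out a proof but simply states that Propositions~\ref{rho b avoid} and~\ref{rho b move}, together with the fact that $\Theta_{231}$-classes are intervals, imply the result. Your argument makes this explicit, and in fact you only need Proposition~\ref{rho b move} and the interval property---the reference to Proposition~\ref{rho b avoid} is not strictly necessary for this particular statement, so your streamlined version is entirely adequate.
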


\begin{prop}\label{piup 231 char}
Let $x\in S_n$.
If some permutation $y\in S_n$ can be obtained from~$x$ by a $(213\to231)$-move, then $\piup_{231}(x)=\piup_{231}(y)$.
Otherwise, $\piup_{231}(x)=x$. 
\end{prop}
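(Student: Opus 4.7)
My plan is to prove both halves by directly dualizing Proposition~\ref{pidown 231 char}, using Proposition~\ref{rho b move} together with the general fact, cited in the paragraph following Proposition~\ref{rho b cong}, that classes of a finite lattice congruence are intervals.

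First I would handle the forward direction. Observe that ``$y$ is obtained from $x$ by a $(213\to 231)$-move'' is, by definition, the same as ``$x$ is obtained from $y$ by a $(231\to 213)$-move,'' since the move exchanges an adjacent ascent $ac$ in $x$ for a descent $ca$ in $y$, so in particular $x\covered y$ in the weak order. Proposition~\ref{rho b move} then immediately yields $\rho_b(x)=\rho_b(y)$, so $x$ and $y$ lie in the same $\Theta_{231}$-class and therefore have equal maximal representatives, $\piup_{231}(x)=\piup_{231}(y)$.

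For the contrapositive of the ``Otherwise'' clause, I would suppose $\piup_{231}(x)\ne x$, so that $x$ is not the maximum of its $\Theta_{231}$-class. By Proposition~\ref{rho b cong} the class is an interval, so any saturated chain from $x$ up to $\piup_{231}(x)$ lies entirely inside the class; its first step gives a cover $y\covers x$ in the weak order with $\rho_b(x)=\rho_b(y)$. Proposition~\ref{rho b move} then tells us $x$ is obtained from $y$ by a $(231\to 213)$-move, equivalently $y$ is obtained from $x$ by a $(213\to 231)$-move, contradicting the hypothesis of the ``Otherwise'' clause.

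I expect no real obstacle; the only slightly subtle step is appealing to the interval structure of the classes in order to produce a cover $y$ that actually lies in the same class as $x$, but this is a standard consequence of $\Theta_{231}$ being a lattice congruence on a finite lattice. In particular, no pattern-avoidance analysis extracting a $2$-$1$-$3$ occurrence into an adjacent form is required, because Proposition~\ref{rho b move} automatically converts the cover inside the class into the desired local move.
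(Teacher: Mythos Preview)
Your proposal is correct and matches the paper's own justification: the paper simply states that Propositions~\ref{rho b avoid} and~\ref{rho b move}, together with the fact that $\Theta_{231}$-classes are intervals, imply Propositions~\ref{pidown 231 char} and~\ref{piup 231 char}, which is precisely the argument you spell out (indeed you only need Proposition~\ref{rho b move} and the interval property, not Proposition~\ref{rho b avoid}).
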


The concatenation of Proposition~\ref{pidown 231 char} with the previous two propositions implies in particular that $y$ admits a $(231\to213)$-move if and only if it contains the pattern $2$-$3$-$1$.
This is easily checked directly.  (See, for example, \cite[Lemma~5.5]{cambrian}.)
A similar statement holds for the pattern $2$-$1$-$3$.

These considerations allow us to think of LR$'$ as the Hopf algebra of ($2$-$3$-$1$)-avoiding permutations, using the map $\pidown^{231}$ in place of $\rho_b$.
The quotient of the weak order modulo the congruence $\Theta_{231}$ is isomorphic \cite{NonpureII,cambrian} to the well-known \emph{Tamari lattice} first defined in~\cite{Tamari}.
By another general fact about finite lattices (found, for example, in \cite[Section~2]{con_app}), this quotient is isomorphic to the subposet induced by the set of minimal congruence class representatives.
In other words, it is the restriction of the weak order to ($2$-$3$-$1$)-avoiding permutations.
This restriction is also a sublattice of the weak order~\cite{NonpureII}.
The Tamari lattice plays a role for LR$'$ analogous to the role the weak order plays for MR.

All of this development can be adapted to the map $\rho_t$, which corresponds to a different instance of the map $\eta$.  
In this case, we define a polygon similarly to the polygon $Q$ described above, with the vertices $1,\ldots,n$ occurring \textbf{below} the horizontal edge $0$---$n+1$.
This corresponds, in the notation of \cite[Section~4]{cambrian}, to putting a lower-bar on each index in $[n]$.
Using results of \cite[Section~5]{cambrian} in a similar way, we obtain analogous results for $\rho_t$.

\begin{prop}\label{rho t cong}
The fibers of $\rho_t$ are the congruence classes of a congruence on the weak order on $S_n$.
\end{prop}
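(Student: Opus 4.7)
The plan is to mirror the proof of Proposition~\ref{rho b cong}, invoking \cite[Theorem~5.1]{cambrian}, but for the alternative instance of the map $\eta$ described in the paragraph immediately preceding the statement. Concretely, I would fix the convex polygon $Q'$ whose vertices $0$ and $n+1$ form a horizontal edge with the remaining vertices $1,\ldots,n$ lying below that edge and labeled left-to-right; in the notation of \cite[Section~4]{cambrian}, this is the all-lower-bar instance, producing a map $\eta':S_n\to\{\text{triangulations of }Q'\}$.

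The crucial combinatorial step is to verify that the dual planar binary tree of $\eta'(x)$, straightened so that its root is above and its leaves lie along a horizontal line, coincides with $\rho_t(x)$. This is the direct analog of the identification between $\eta$ and $\rho_b$ illustrated by comparing Figures~\ref{eta rho b} and~\ref{rho b fig}. Since a triangulation of $Q'$ is determined by its dual tree, this identification shows that the fibers of $\eta'$ coincide with the fibers of $\rho_t$. Applying \cite[Theorem~5.1]{cambrian} to $Q'$ then immediately yields that these fibers form the congruence classes of a lattice congruence on the weak order on $S_n$.

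The main obstacle is the verification in the second paragraph, which is a careful side-by-side traversal of the two constructions. Intuitively, flipping $Q$ upside-down to obtain $Q'$ exchanges ``above'' and ``below'' in the definition of $\eta$, which reverses the order in which the dual tree is assembled as the permutation is read left-to-right. Since $\rho_b$ processes entries from the root outward (the first entry labels the space between the two main subtrees) while $\rho_t$ processes them from the leaves inward (the first entry joins the two adjacent leaves), this reversal is exactly what turns the straightened dual tree of $\eta'(x)$ into $\rho_t(x)$ rather than $\rho_b(x)$. All other aspects of the proof transfer verbatim from the $\rho_b$ case; no new lattice-theoretic input is needed beyond the cited theorem.
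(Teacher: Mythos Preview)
Your proposal is correct and follows essentially the same approach as the paper: the paper states that ``all of this development can be adapted to the map $\rho_t$'' by taking the polygon with vertices $1,\ldots,n$ below the horizontal edge (the all-lower-bar instance in the notation of \cite[Section~4]{cambrian}) and then invoking the results of \cite[Section~5]{cambrian} in the same way as for $\rho_b$. Your write-up is in fact more explicit than the paper's, which leaves the verification that the dual tree of $\eta'(x)$ is $\rho_t(x)$ entirely to the reader.
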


\begin{prop}\label{rho t avoid}

\noindent
\begin{enumerate}
\item A permutation is minimal in its $\rho_t$-fiber if and only if it avoids $3$-$1$-$2$.
\item A permutation is maximal in its $\rho_t$-fiber if and only if it avoids $1$-$3$-$2$.
\end{enumerate}
\end{prop}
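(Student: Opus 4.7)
The plan is to deduce this result from the parallel statement for $\rho_b$ (Proposition~\ref{rho b avoid}) by exploiting a symmetry between the two constructions mediated by the antiautomorphism $\rp$ of the weak order from Section~\ref{auto sec}.

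The first step is to establish the identity $\rho_t(x) = \sigma(\rho_b(\rp(x)))$ for every $x\in S_n$, where $\sigma$ denotes the vertical reflection sending a top tree to the bottom tree with the same combinatorial structure. The construction of $\rho_t(x)$ in Figure~\ref{rho t fig} adds internal vertices starting from the leaves and working upward, in the order $x_1,x_2,\ldots,x_n$; vertically reflecting each intermediate step produces a bottom tree being built from the root downward in the same order of entries. Reversing the order in which entries are read, that is, replacing $x$ by $\rp(x)=x_nx_{n-1}\cdots x_1$, then aligns this reflected process with the construction of $\rho_b$ shown in Figure~\ref{rho b fig}. A careful induction on $n$, tracking how the space labels propagate and are updated, confirms that the two procedures yield the same tree.

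Once the identity is in hand, the fibers of $\rho_t$ are precisely the $\rp$-images of the fibers of $\rho_b$. Since $\rp$ is an antiautomorphism of the weak order, it interchanges the minimum and maximum of each fiber. Consequently, $x$ is minimal in its $\rho_t$-fiber if and only if $\rp(x)$ is maximal in its $\rho_b$-fiber, and $x$ is maximal in its $\rho_t$-fiber if and only if $\rp(x)$ is minimal in its $\rho_b$-fiber. It is immediate from the definition of $\rp$ that this map sends the pattern $2$-$1$-$3$ to $3$-$1$-$2$ and the pattern $2$-$3$-$1$ to $1$-$3$-$2$; hence a permutation $y$ avoids $2$-$1$-$3$ if and only if $\rp(y)$ avoids $3$-$1$-$2$, and likewise for the other pair. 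Combining these observations with Proposition~\ref{rho b avoid} yields both parts of the proposition.

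The main technical obstacle is the first step: a rigorous verification of the identity $\rho_t(x) = \sigma(\rho_b(\rp(x)))$, which requires carefully comparing two inductive constructions whose surface descriptions look quite different. An alternative route, explicitly indicated by the authors in the paragraph preceding the proposition, is to appeal directly to \cite[Proposition~5.7]{cambrian} applied to the polygon in which the labeled vertices $1,\ldots,n$ sit below the edge $0$---$(n+1)$ (corresponding to the lower-bar convention); that approach sidesteps the direct combinatorial identity on trees and instead makes the symmetry visible on the polygon side, with the change of patterns being a direct consequence of the bar conventions in \cite{cambrian}.
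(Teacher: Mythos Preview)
Your proposal is correct. The paper itself offers no independent argument for this proposition: it simply notes that $\rho_t$ corresponds to the instance of the map $\eta$ from \cite{cambrian} in which the vertices $1,\ldots,n$ lie \emph{below} the edge $0$---$(n{+}1)$ (the lower-bar convention), and then cites \cite[Section~5]{cambrian}---in particular \cite[Proposition~5.7]{cambrian}---for the result, exactly the route you describe in your final paragraph.

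Your primary approach, reducing to Proposition~\ref{rho b avoid} via the antiautomorphism $\rp$, is a genuinely different and self-contained alternative. The tree identity $\rho_t(x)=\sigma(\rho_b(\rp(x)))$ is indeed correct; a clean way to verify it is to check that an internal node $i$ is an ancestor of an internal node $j$ in $\rho_t(x)$ if and only if $i$ occurs \emph{last} among the values in $[\min(i,j),\max(i,j)]$ in $x$, which is precisely the $\rp$-image of the standard ``occurs first'' characterization for $\rho_b$. The paper does exploit essentially the same symmetry later (at the end of Section~\ref{pattern sec}), but there it uses the automorphism $\rv\circ\rp$ together with Propositions~\ref{rho b move} and~\ref{rho t move} rather than a direct tree identity, and it does so to relate LR and LR$'$, not to prove the present proposition. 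Your route has the advantage of staying entirely inside the paper and avoiding the external reference; the paper's route has the advantage of dispatching Propositions~\ref{rho t cong}--\ref{rho t move} uniformly with a single citation.
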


Modifying the definitions above in the obvious ways to obtain the definitions of \emph{$(312\to132)$-moves}, \emph{$(132\to312)$-moves}, $\Theta_{312}$, $\pidown^{312}$, and $\piup_{312}$, we also have the following results.

\begin{prop}\label{rho t move}
Let $x\covered y$ in the weak order.
Then $\rho_t(x)=\rho_t(y)$ if and only if $x$ is obtained from $y$ by a $(312\to132)$-move.
\end{prop}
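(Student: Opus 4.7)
The plan is to prove Proposition~\ref{rho t move} in parallel with the proof of Proposition~\ref{rho b move}. Just as \ref{rho b move} was obtained by specializing \cite[Proposition~5.3]{cambrian} to the ``upper-bar'' instance of the map $\eta$ (in which the $n$ non-boundary vertices of the polygon $Q$ lie above the edge $0$---$(n+1)$), Proposition~\ref{rho t move} should be obtained by specializing the same result to the ``lower-bar'' instance (those vertices lying below the edge). First I would check, as was done for $\rho_b$ just before Proposition~\ref{rho b cong}, that in the lower-bar configuration the dual planar binary tree of $\eta(x)$, properly straightened out, coincides with $\rho_t(x)$. Since a triangulation is determined by its dual tree, this would yield $\rho_t(x) = \rho_t(y)$ iff $\eta(x) = \eta(y)$, and \cite[Proposition~5.3]{cambrian} characterizes when this equality holds across a cover relation.

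The remaining work is to translate that characterization into permutation-pattern language. The expected outcome is that a cover $x \covered y$ collapses under $\rho_t$ if and only if $y$ has an adjacent descending pair $c,a$ (with $a < c$) that form the ``$3$'' and ``$1$'' of some $3$-$1$-$2$ pattern in $y$, with the ``$2$'' witness occurring to the right of the pair---precisely a $(312 \to 132)$-move. The main obstacle is careful geometric bookkeeping: in the upper-bar case the witness of the relevant $2$-$3$-$1$ pattern lies to the \emph{left} of the swapped pair, whereas in the lower-bar case the reflected geometry forces the witness of the analogous $3$-$1$-$2$ pattern to lie to the \emph{right}, and this direction-flipping must be tracked.

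A conceptually cleaner alternative, avoiding any revisit to \cite{cambrian}, is to reduce \ref{rho t move} to \ref{rho b move} via the antiautomorphism $\rp$ of the weak order. A direct comparison of the step-by-step constructions of $\rho_t$ and $\rho_b$ shows that $\rho_t(x)$ and $\rho_b(\rp(x))$ are the same planar binary tree (modulo the top/bottom orientation convention), so that $\rho_t(x) = \rho_t(y)$ iff $\rho_b(\rp(x)) = \rho_b(\rp(y))$. Because $\rp$ is an antiautomorphism, the cover $x \covered y$ corresponds to the cover $\rp(y) \covered \rp(x)$; and a direct check from the definitions shows that a $(312 \to 132)$-move from $y$ to $x$ corresponds under $\rp$ to a $(231 \to 213)$-move from $\rp(x)$ to $\rp(y)$, since $\rp$ preserves adjacent descending pairs while converting ``witness to the right'' into ``witness to the left''. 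Proposition~\ref{rho b move} then yields the desired equivalence.
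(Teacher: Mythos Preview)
Your primary approach---specializing \cite[Proposition~5.3]{cambrian} to the lower-bar configuration of $Q$, after checking that the dual tree of $\eta(x)$ in that configuration is $\rho_t(x)$---is exactly what the paper does. The paper simply states that ``using results of \cite[Section~5]{cambrian} in a similar way, we obtain analogous results for $\rho_t$,'' with Proposition~\ref{rho t move} among them.

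Your alternative route via $\rp$ is correct and genuinely different from the paper's argument. The identity $\rho_t(x)\cong\rho_b(\rp(x))$ (up to the top/bottom flip) does hold: reading $x$ from the leaves up in $\rho_t$ is the same as reading $\rp(x)$ from the root down in $\rho_b$, since the last entry read by $\rho_t$ determines the root split, which is the first entry read by $\rho_b$. Your check that $\rp$ carries $(312\to132)$-moves to $(231\to213)$-moves is right, and together with Proposition~\ref{rho b move} this yields the result without revisiting \cite{cambrian}. The paper in fact uses a closely related symmetry argument later (with $\rv\circ\rp$ rather than $\rp$) to deduce that $\mathrm{LR}$ and $\mathrm{LR}'$ are anti-isomorphic, but it derives that \emph{from} Propositions~\ref{rho b move} and~\ref{rho t move} rather than using it to \emph{prove} Proposition~\ref{rho t move}. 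Your alternative is slightly more economical in that it needs only one appeal to \cite{cambrian}; the paper's approach keeps the two cases logically parallel and independent.
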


\begin{prop}\label{pidown 312 char}
Let $y\in S_n$.
If some permutation $x\in S_n$ can be obtained from $y$ by a $(312\to132)$-move, then $\pidown^{312}(y)=\pidown^{312}(x)$.
Otherwise, $\pidown^{312}(y)=y$. 
\end{prop}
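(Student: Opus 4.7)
The plan is to mirror the (implicit) proof of Proposition~\ref{pidown 231 char}, substituting $\rho_t$, $\Theta_{312}$, and the pattern $3$-$1$-$2$ for $\rho_b$, $\Theta_{231}$, and $2$-$3$-$1$. The two main inputs are Proposition~\ref{rho t cong} (fibers of $\rho_t$ are congruence classes, hence intervals in the weak order) and Proposition~\ref{rho t move} (cover relations collapsed by $\rho_t$ are exactly the $(312\to132)$-moves).

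First I would dispatch the easy direction. If $x$ is obtained from $y$ by a $(312\to132)$-move, then by definition $x\covered y$ in the weak order, and Proposition~\ref{rho t move} gives $\rho_t(x)=\rho_t(y)$. Hence $x$ and $y$ lie in the same $\Theta_{312}$-class, so they share the same minimum representative, yielding $\pidown^{312}(y)=\pidown^{312}(x)$.

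Next I would handle the converse by contrapositive: assuming $\pidown^{312}(y)\ne y$, I produce a $(312\to132)$-move from $y$. By Proposition~\ref{rho t cong} and the general fact (invoked earlier in the excerpt, after Proposition~\ref{rho b cong}) that every congruence class of a finite lattice is an interval, the $\Theta_{312}$-class of $y$ is an interval $[y_{\min},y_{\max}]$ in the weak order with $y_{\min}=\pidown^{312}(y)$. Since $y\ne y_{\min}$, a saturated chain from $y_{\min}$ up to $y$ inside this interval furnishes an element $x\covered y$ with $\rho_t(x)=\rho_t(y)$, and Proposition~\ref{rho t move} then exhibits $x$ as being obtained from $y$ by a $(312\to132)$-move.

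There is no serious obstacle: the proof is a direct two-line deduction from Propositions~\ref{rho t cong} and~\ref{rho t move}. The only step worth highlighting is the use of the interval structure of congruence classes in the converse, since this is the feature that lets a single downward cover in the same class be extracted from the mere inequality $y\ne y_{\min}$. Once that is in hand, the rest is mechanical, and the same outline proves Proposition~\ref{piup 231 char} and its $\rho_t$-analog by reversing inequalities.
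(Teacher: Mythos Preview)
Your proposal is correct and follows essentially the same route the paper indicates: the paper states that Propositions~\ref{rho b avoid} and~\ref{rho b move}, together with the fact that $\Theta_{231}$-classes are intervals, imply Proposition~\ref{pidown 231 char}, and then declares the $\rho_t$-analogs (including Proposition~\ref{pidown 312 char}) by the same argument. Your write-up actually makes the deduction slightly more economical, since you extract both directions from Proposition~\ref{rho t move} and the interval property alone, without separately invoking the pattern-avoidance characterization of Proposition~\ref{rho t avoid}.
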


\begin{prop}\label{piup 312 char}
Let $x\in S_n$.
If some permutation $y\in S_n$ can be obtained from $x$ by a $(132\to312)$-move, then $\piup_{312}(x)=\piup_{312}(y)$.
Otherwise, $\piup_{312}(x)=x$. 
\end{prop}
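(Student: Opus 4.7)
The plan is to reduce the statement to the structural facts about $\Theta_{312}$ already in hand, in direct parallel with the reasoning that precedes Proposition~\ref{piup 231 char}. By Proposition~\ref{rho t cong} the $\Theta_{312}$-classes are precisely the fibers of $\rho_t$, and the general fact on congruences of finite lattices cited earlier from \cite[Section~2]{con_app} tells us that each class is an interval in the weak order and so has a unique maximum. This makes $\piup_{312}(x)$ unambiguously equal to the top of the class of $x$, and the problem reduces to identifying exactly when $x$ is itself that maximum.

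For the first assertion, I would observe that if $y$ is obtained from $x$ by a $(132\to312)$-move, then by the definition of such a move $x\covered y$ in the weak order. Proposition~\ref{rho t move} then gives $\rho_t(x)=\rho_t(y)$, so $x$ and $y$ lie in the same $\Theta_{312}$-class and hence share the same class maximum.

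For the second assertion I would argue by contrapositive: assume $\piup_{312}(x)\neq x$. Since the class of $x$ is an interval with $\piup_{312}(x)$ strictly above $x$, there must exist some cover $x\covered y$ with $y$ still in that class. Proposition~\ref{rho t move} then identifies $x$ as arising from $y$ by a $(312\to132)$-move, equivalently $y$ as arising from $x$ by a $(132\to312)$-move, which supplies precisely the witness whose existence was being denied.

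There is no substantive obstacle here; the one point worth a quick sanity check, rather than a genuine difficulty, is that a $(132\to312)$-move really does go \emph{up} in the weak order — that is, it swaps an adjacent ascending pair into an adjacent descending pair. This is immediate from unpacking the definitions of $(312\to132)$- and $(132\to312)$-moves in parallel with the $231/213$ case, so the whole argument is essentially a transcription of the reasoning already used implicitly for Proposition~\ref{piup 231 char}.
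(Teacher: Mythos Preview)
Your proof is correct and follows essentially the same approach the paper indicates: the paper states that Propositions~\ref{rho t avoid} and~\ref{rho t move}, together with the fact that $\Theta_{312}$-classes are intervals, yield Propositions~\ref{pidown 312 char} and~\ref{piup 312 char} in direct analogy with the $\Theta_{231}$ case. Your argument spells out exactly this reasoning, using Proposition~\ref{rho t move} and the interval structure of congruence classes; you do not invoke Proposition~\ref{rho t avoid}, but it is not actually needed for the contrapositive direction once you have the interval fact and the move characterization.
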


In particular, LR is the Hopf algebra of ($3$-$1$-$2$)-avoiding permutations.
The quotient of the weak order modulo $\Theta_{312}$ is the restriction of the weak order to ($3$-$1$-$2$)-avoiding permutations.
This quotient is also isomorphic to the Tamari lattice, is also a sublattice of the weak order, and plays a role for LR analogous to the role the weak order plays for MR.

The characterization of LR and LR$'$ in terms of pattern-avoidance now makes it easy to prove the earlier assertion that they are anti-isomorphic.
Recall from Section~\ref{auto sec} that the map $\rv\circ\rp$ is an antiautomorphism of MR.
One easily verifies that $\rv\circ\rp$ maps a $(231\to213)$-move to a $(312\to132)$-move.
Thus by Propositions~\ref{rho b move} and~\ref{rho t move} (which completely characterize the fibers of $\rho_b$ and $\rho_t$ because congruence classes are intervals), we see that $\rv\circ\rp$ maps the fibers of $\rho_b$ to the fibers of $\rho_t$.
Thus the involution $\rv\circ\rp$ restricts to an anti-isomorphism from LR$'$ to LR.

\subsection{The Hopf algebra NSym}\label{NSym sec}
Just as LR and LR$'$ were defined using maps from permutations to planar binary trees, the Hopf algebra of subsets is also defined using a map.
In this case, we map to pairs $(U,n)$ such that $n\ge 0$ and $U$ is a set of positive integers strictly less than $n$.
In particular, $U$ is empty when $n=0$ or $n=1$. 
Given a permutation $x\in S_n$, define $\sigma(x)=(U(x),n)$, where $U(x)$ is the set of positive integers $a<n$ such that $a+1$ occurs somewhere before $a$ in $x_1x_2\cdots x_n$.

The map $\sigma$ restricts to a lattice homomorphism from the weak order on $S_n$ to the lattice of subsets of $[n-1]$.
To see why, recall a convenient characterization of the meet and join operations on the weak order.
A permutation $x\in S_n$ is uniquely determined by its \emph{(left) inversion set}: the set of pairs $(a,b)$ such that $a>b$ and $a$ occurs somewhere before $b$ in $x_1x_2\cdots x_n$.
Interpreting this set of pairs as a relation, the inversion set of $x\join y$ is the transitive closure of the union of the inversion sets of $x$ and $y$.
The set $U(x)$ is the intersection of the inversion set of $x$ with the set $\set{(a+1,a):a\in[n-1]}$.
It is thus easily seen that $U(x\join y)=U(x)\cup U(y)$.
The meet in the weak order has a similar characterization involving the transitive closure of the union of sets of ``non-inversions,'' and we easily conclude that $U(x\meet y)=U(x)\cap U(y)$.

Since $\sigma$ is a lattice homomorphism, the fibers of $\sigma$ are a lattice congruence, which we represent by the symbol $\Theta_{\operatorname{sub}}$.
This congruence is the join of $\Theta_{231}$ and $\Theta_{312}$, as we now explain.
The \emph{congruence lattice} of a lattice $L$ is the set of all congruences, partially ordered as an induced subposet of the lattice of set partitions of $L$.
In fact, this subposet is a sublattice of the lattice of set partitions, and furthermore, is a distributive lattice.
(See, for example, \cite[Theorem~II.3.11]{Gratzer}.) 
The following fact is immediate from the definition of $\sigma$:

\begin{prop}\label{sigma move}
Let $x\covered y$ in the weak order, so that $x$ is obtained from $y$ by swapping a pair of entries in adjacent positions.
Then $\sigma(x)=\sigma(y)$ if and only if the two entries differ in value by two or more.
\end{prop}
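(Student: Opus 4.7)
The plan is to unpack the definitions directly; no machinery beyond what appears in the preceding paragraphs is needed. Write $x = x_1 x_2 \cdots x_n$, and let $i$ be the position where the covering swap occurs. Since $x \covered y$, the definition of a cover in the weak order gives $x_i < x_{i+1}$, and $y$ is obtained from $x$ by interchanging the values $a := x_i$ and $b := x_{i+1}$ (with $a < b$) in positions $i$ and $i+1$. The crucial observation is that this swap changes the relative order of the pair of values $\{a,b\}$ and of no other pair of values: for any two values $u \neq v$ with $\{u,v\} \neq \{a,b\}$, the order in which $u$ and $v$ appear in the one-line notation is the same in $x$ as in $y$.

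Recall that, by definition, $c \in U(x)$ precisely when $c+1$ occurs before $c$ in the one-line notation of $x$, a condition depending only on the relative order of the two values $c$ and $c+1$. Combining this with the previous observation, the only value of $c$ whose membership in $U$ can possibly differ between $x$ and $y$ is one for which $\{c, c+1\} = \{a, b\}$; such a $c$ exists if and only if $b = a+1$, in which case $c = a$.

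From here the two cases of the proposition fall out. First I would handle the case $b - a \ge 2$: no $c$ as above exists, so $U(x) = U(y)$, and since $x,y \in S_n$ we get $\sigma(x) = (U(x), n) = (U(y), n) = \sigma(y)$. Then in the case $b = a+1$: in $x$, the value $a$ stands immediately left of $a+1$, so $a \notin U(x)$; in $y$ the positions of $a$ and $a+1$ are swapped, so $a+1$ stands immediately left of $a$ and $a \in U(y)$. Hence $U(x) \neq U(y)$ and $\sigma(x) \neq \sigma(y)$.

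There is no real obstacle; the argument is a one-line verification once one notes that a cover in the weak order permutes exactly one pair of values and that $U(x)$ is defined entirely through pairs of consecutive values. The whole proof is essentially the content of the two cases above.
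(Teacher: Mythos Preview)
Your proof is correct and is precisely the direct verification the paper has in mind: the paper states that the proposition ``is immediate from the definition of $\sigma$'' and gives no further argument, and your unpacking of which pairs of consecutive values change relative order under a single adjacent transposition is exactly that immediate verification.
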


Proposition~\ref{sigma move} can be restated as follows:  
If $x\covered y$ in the weak order, then $\sigma(x)=\sigma(y)$ if and only $x$ is obtained from $y$ by either a $(231\to213)$-move or a $(312\to132)$-move.
Comparing with Propositions~\ref{rho b move} and~\ref{rho t move}, and keeping in mind that $\Theta_{\operatorname{sub}}$-classes are intervals, we conclude that $\Theta_{\operatorname{sub}}$ is the join of $\Theta_{231}$ and $\Theta_{312}$ in the lattice of partitions of $S_n$, and therefore also in the congruence lattice of the weak order on $S_n$.

The map $\sigma$ shares the remarkable Hopf-algebraic properties of $\rho_b$ and $\rho_t$: 
The sums over the fibers of $\sigma$ are a graded basis for a graded Hopf subalgebra of MR.
The Hopf subalgebra, denoted here by NSym, is better known as the Hopf algebra of noncommutative symmetric functions~\cite{GKLLRT}.
The lattice of subsets play the analogous role for NSym as the weak order and Tamari lattice play for MR and LR.
Alternately, the basis elements of NSym can be indexed by permutations avoiding both $2$-$3$-$1$ and $3$-$1$-$2$, or equivalently, permutations such that each entry is at most one greater than the entry to its right.
The considerations at the end of Section~\ref{pattern sec} show that the map $\rv\circ\rp$ is an antiautomorphism of NSym.
The same argument that shows that $\Theta_{\operatorname{sub}}=\Theta_{231}\join\Theta_{312}$ also shows that NSym is the intersection of LR and LR$'$, as Hopf subalgebras of MR.

\subsection{Some quotients of MR}\label{quot sec}
Passing to graded duals turns Hopf subalgebra relationships into quotient relationships.
Thus the self-duality of MR means that, for each Hopf subalgebra of MR, there is a corresponding quotient Hopf algebra of MR (or, in other words, a homomorphic image of MR). 
In particular, the duals LR$^*$, $($LR$')^*$ and NSym$^*$ are quotients of MR$\cong$MR$^*$.

Consider the map LR$\into$MR$\stackrel{\cong}{\longleftrightarrow}$MR$^*\onto$LR$^*$, where the left arrow is the inclusion defined in Section~\ref{lr sec}, the right arrow is its dual surjection, and the middle arrow is the isomorphism $\inv$.
This is a homomorphism of graded Hopf algebras, because each of its three steps is a homomorphism.
It is also an invertible map of graded vector spaces~\cite{Quelques,HNT}, and thus it is an isomorphism.
Therefore LR is self-dual.  
The Hopf algebra NSym is not self-dual; its dual NSym$^*$ is better known as the Hopf algebra of quasi-symmetric functions.

\section{The Hopf algebra tBax of twisted Baxter permutations}\label{tBax Hopf sec}
The realizations, given in Section~\ref{sub Hopf sec}, of the Hopf subalgebras LR, LR$'$ and NSym of MR all ultimately rest on natural combinatorial maps.
Furthermore, these maps are lattice homomorphisms from the weak order to natural lattice structures.
In~\cite{con_app}, a unification and generalization of this construction is given.
We begin this section with a brief, undetailed summary of the approach and results of~\cite{con_app}.
We then describe in more detail a particular new Hopf algebra which arose from the generalization.

\subsection{$\H$-Families}\label{H fam sec}
For each $n\ge 0$, let $\Theta_n$ be a lattice congruence on the weak order on $S_n$.
Let $(\pidown)_n$ be the map taking each $x\in S_n$ to the minimal representative of its $\Theta_n$-class.
Let $Z^\Theta_n=(\pidown)_n(S_n)$ be the set of minimal elements of $\Theta_n$-classes.
(Thus, by a general theorem mentioned earlier, the weak order restricted to $Z^\Theta_n$ is isomorphic to the lattice quotient of $S_n$ modulo $\Theta_n$.)
Define a graded vector space $\K[Z_\infty^\Theta]=\bigoplus_{n\ge 0}\K[Z^\Theta_n]$.
For each fixed choice of a sequence of congruences, define a map $c^\Theta:\K[Z^\Theta_\infty]\to\K[S_\infty]$ by sending each element of $Z^\Theta_n$ to the sum (with coefficient 1) over its $\Theta_n$-class.
Define another map $r^\Theta:\K[S_\infty]\to\K[Z^\Theta_\infty]$ which, for each $x\in S_n$, fixes $x$ if $x\in Z^\Theta_n$ and otherwise sends $x\in S_n$ to zero.
The letters $c$ and $r$ suggest ``class'' and ``representative.''
The map $c$ is an injection, while $r$ is a surjection, and the composition $r\circ c$ is the identity on $\K[Z^\Theta_\infty]$, while $c\circ r$ is the identity on $c^\Theta(\K[Z^\Theta_\infty])$.

The graded vector spaces LR, LR$'\!$, and NSym can all be realized as $\K[Z^\Theta_\infty]$ for choices of $\Theta_n$ described in Section~\ref{sub Hopf sec}.
Moreover, in each case, the elements of the form $c^\Theta(z)$ form a graded basis of a graded Hopf subalgebra of MR.
The main result of the last half of~\cite{con_app} is a characterization of families $\set{\Theta_n}_{n\ge 0}$ of congruences having this same property.
A family $\set{\Theta_n}_{n\ge 0}$ is said to be an $\H$-family if it satisfies certain conditions given in~\cite{con_app}.
If $\set{\Theta_n}_{n\ge 0}$ is an $\H$-family, then the elements $c^\Theta(z)$ form a graded basis of a graded Hopf subalgebra of MR.
In this case, $\K[Z^\Theta_\infty]$ has a unique Hopf algebra structure such that $c^\Theta$ is an injective homomorphism of Hopf algebras.
The product $x\bullet_Zy$ is $r^\Theta(x\bullet_Sy)$.
In other words, $x\bullet_Zy$ is obtained by summing over all shifted shuffles of $x$ and $y$, but ignoring those shuffles that are not in some $Z^\Theta_n$.
The coproduct is $\Delta_Z=(r^\Theta\otimes r^\Theta)\circ\Delta_S\circ c^\Theta$.
In other words, the coproduct has the following formula, which corrects an unfortunate typographical error\footnote{In Equation (2) of \cite[Section~8]{con_app}, the symbols $x_1,\ldots,x_p$ and $x_{p+1},\ldots,x_n$ should be replaced by $y_1,\ldots,y_p$ and $y_{p+1},\ldots,y_n$.} of Equation (2) of \cite[Section~8]{con_app}.
\[\Delta_Z(x)=\sum_{\substack{y\in S_n\\\pidown y=x}}\sum_{i=0}^n\std^\Theta(y_1,\ldots,y_i)\otimes\std^\Theta(y_{i+1},\ldots,y_n).\]
Here $\pidown y$ is short for $(\pidown)_n(y)$, and $\std^\Theta$ is the composition $r\circ\std$.
In other words, $\std^\Theta$ standardizes the sequence, and sends the standardization to zero if it is not in some $Z^\Theta_n$.

For any $\H$-family $\set{\Theta_n}_{n\ge 0}$, the elements of $Z_n^\Theta$ are described by a pattern-avoidance condition given explicitly in \cite[Theorem~9.3]{con_app}.
Special cases of the condition have been seen in Sections~\ref{pattern sec} and~\ref{NSym sec}.
Another special case is described in Section~\ref{tBax sec}.
Each map $(\pidown)_n$ is a lattice homomorphism from the weak order on all of $S_n$ to the weak order restricted to $Z^\Theta_n$, the set of permutations satisfying the pattern-avoidance condition.

\subsection{Twisted Baxter permutations}\label{tBax sec}
We now define twisted Baxter permutations in the generalized pattern notation of~\cite{Gen Pat}.
An occurrence of the pattern $3$-$41$-$2$ in a permutation $x=x_1x_2\cdots x_n$ is a subsequence $cdab$ of $x_1x_2\cdots x_n$ with $a<b<c<d$ such that $d$ occurs immediately previous to $a$ in $x_1x_2\cdots x_n$.
This differs from an occurrence of the pattern $3$-$4$-$1$-$2$ in $x$, in the sense defined in Section~\ref{pattern sec}, which is any subsequence $cdab$ of $x_1x_2\cdots x_n$ with $a<b<c<d$, with no requirement that $d$ and $a$ be adjacent.
Thus for example, the pattern $3$-$4$-$1$-$2$ occurs in $45312\in S_5$ but the pattern $3$-$41$-$2$ does not.
Similarly, one can define $2$-$41$-$3$-patterns, $3$-$14$-$2$-patterns, and $2$-$14$-$3$-patterns.

\emph{Twisted Baxter permutations} are permutations that avoid both $3$-$41$-$2$ and \mbox{$2$-$41$-$3$}.
They were introduced in~\cite{con_app} as the basis of a new Hopf algebra arising from the construction described in Section~\ref{H fam sec}.
They were conjectured in~\cite{con_app} to be in bijection with the \emph{(reduced) Baxter permutations} of~\cite{CGHK}, and this conjecture was soon after proved by West~\cite{West pers} using generating trees.
In Section~\ref{Bax tBax bij}, we provide a different proof.
A permutation is a Baxter permutation if and only if it avoids both $3$-$14$-$2$ and $2$-$41$-$3$.
Notice that the definition of the twisted Baxter permutations is a very slight modification of the definition of Baxter permutations.
Let $\tBax_n$ be the set of twisted Baxter permutations in $S_n$ and let $\Bax_n$ be the set of Baxter permutations in $S_n$.

The patterns $3$-$41$-$2$, $2$-$41$-$3$, $3$-$14$-$2$, and $2$-$14$-$3$ have the following special property:
\begin{prop}\label{23 adj}
If a permutation $x$ contains the pattern $3$-$41$-$2$, then $x$ contains an instance of the pattern $3$-$41$-$2$ such that the ``3'' and the ``2'' differ in value by exactly 1.
The same is true of the patterns $2$-$41$-$3$, $3$-$14$-$2$, and $2$-$14$-$3$.
\end{prop}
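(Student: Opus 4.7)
The plan is to prove the statement first for $3$-$41$-$2$ by induction on the value gap $c-b$, where the four values of an occurrence are $a<b<c<d$ (so that $c$ plays the role of the ``3'' and $b$ plays the role of the ``2''). Fix an occurrence $cdab$ of $3$-$41$-$2$ in $x$, meaning a subsequence whose values are $c,d,a,b$ (with $a<b<c<d$) and for which $d$ is immediately followed by $a$ in $x_1x_2\cdots x_n$. If $c-b=1$, there is nothing to prove; otherwise pick any value $v$ with $b<v<c$.

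Now consider the position $j$ of $v$. Since $v$ is distinct from each of $a,b,c,d$, its position is distinct from the four positions of the occurrence, and in particular $j$ is either strictly before the position of $d$ or strictly after the position of $a$. If $j$ precedes $d$, the quadruple $v,d,a,b$ forms a valid subsequence of $x$ whose middle pair is still adjacent and whose values satisfy $a<b<v<d$; thus $vdab$ is an occurrence of $3$-$41$-$2$ with value gap $v-b<c-b$. If instead $j$ follows $a$, the quadruple $c,d,a,v$ is a valid subsequence (again with adjacent middle pair) whose values satisfy $a<v<c<d$; thus $cdav$ is an occurrence of $3$-$41$-$2$ with value gap $c-v<c-b$. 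Either way, the induction hypothesis furnishes an occurrence with gap $1$.

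The arguments for the patterns $2$-$41$-$3$, $3$-$14$-$2$, and $2$-$14$-$3$ are entirely analogous: in each case the four values of an occurrence can still be written as $a<b<c<d$ with $c$ playing the ``3'' and $b$ playing the ``2,'' and an intermediate value $v$ with $b<v<c$ either lies before the leftmost of the adjacent middle pair or after the rightmost of that pair, producing in each situation a new occurrence of the same pattern with strictly smaller value gap. Alternatively, one can reduce the other three cases to $3$-$41$-$2$ using the antiautomorphisms $\rv$ and $\rp$ from Section~\ref{auto sec}, which permute these four patterns while preserving the gap between the entries playing the ``3'' and the ``2.'' No serious obstacle is expected; the only point requiring care is verifying that each modified quadruple really is a legitimate occurrence, i.e.\ that its four positions are distinct and that the middle pair remains adjacent, which is immediate from the strict inequalities $b<v<c$.
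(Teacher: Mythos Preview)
Your proof is correct and is essentially the same argument as the paper's: the paper picks an occurrence minimizing $c-b$ and, assuming $c>b+1$, examines where the value $b+1$ lies to obtain a contradiction, whereas you phrase the same idea as an induction on $c-b$ using an arbitrary intermediate value $v$. These are the same proof in slightly different clothing.
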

The assertion of the proposition is that if $x=x_1\cdots x_n$ contains $3$-$41$-$2$ then $x_1\cdots x_n$ contains a subsequence $(b+1)dab$ with $a<b<b+1<d$ and $d$ adjacent to $a$ in $x_1\cdots x_n$.
\begin{proof}
Suppose $cdab$ is a subsequence of $x_1\cdots x_n$ with $a<b<c<d$, such that $d$ is adjacent to $a$.
Furthermore, among all such subsequences, choose $cdab$ to minimize $c-b$.
Suppose for the sake of contradiction that $c\neq b+1$.
If $b+1$ occurs to the left of $d$, then $(b+1)dab$ is an instance of the pattern $3$-$41$-$2$, contradicting our choice of $cdab$.
Otherwise $b+1$ occurs to the right of $a$, and $cda(b+1)$ is an instance of the pattern $3$-$41$-$2$, again contradicting our choice of $cdab$.
This contradiction in both cases proves that $c=b+1$.
We have proved the statement for the pattern $3$-$41$-$2$.
The proof for the other patterns is essentially identical.
\end{proof}

Proposition~\ref{23 adj} has the following corollary, which explains why the definition above appears to define the inverses of the Baxter permutations defined in~\cite{CGHK}.  

\begin{cor}\label{inv Bax}
A permutation $x$ is a Baxter permutation if and only if the inverse permutation $x^{-1}$ is a Baxter permutation.
\end{cor}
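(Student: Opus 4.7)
The plan is to show that the set of forbidden patterns defining Baxter permutations, namely $3$-$14$-$2$ and $2$-$41$-$3$, is preserved as a pair under the operation $x\mapsto x^{-1}$. More precisely, I will prove that $x^{-1}$ contains a $3$-$14$-$2$ pattern if and only if $x$ contains a $2$-$41$-$3$ pattern, and symmetrically with the two patterns interchanged. Since a permutation is Baxter exactly when it avoids both patterns, the equivalence of the avoidance conditions for $x$ and $x^{-1}$ is then immediate.

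The underlying observation is that inversion swaps positions with values. If $y=x^{-1}$ contains a subsequence at positions $j_1<j_2<j_3<j_4$ with values $c,a,d,b$, where $a<b<c<d$ (a $3$-$1$-$4$-$2$ pattern as a word), then in $x=y^{-1}$ the positions $a<b<c<d$ carry the values $j_2,j_4,j_1,j_3$, which form a $2$-$4$-$1$-$3$ pattern. Under this correspondence, position-adjacency in $y$ translates to value-adjacency in $x$, and value-adjacency in $y$ translates to position-adjacency in $x$.

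Here is where Proposition~\ref{23 adj} does the real work. If $y$ contains $3$-$14$-$2$ (position-adjacency of the ``1'' and ``4''), then by the proposition it contains such an instance in which additionally the ``3'' and ``2'' are consecutive integers (value-adjacency of the outermost entries). Transporting this doubly-adjacent instance to $x$ via the correspondence above, the value-adjacency of the ``3'' and ``2'' in $y$ translates into position-adjacency in $x$ of the two entries that play the roles of ``4'' and ``1'' in the resulting $2$-$4$-$1$-$3$ pattern. This is precisely an instance of $2$-$41$-$3$ in $x$. An entirely symmetric argument produces a $3$-$14$-$2$ pattern in $x$ from any $2$-$41$-$3$ pattern in $y$.

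Applying these two implications with $y$ replaced by $x^{-1}$ (using $(x^{-1})^{-1}=x$), and combining with the same implications applied to $x$ itself, shows that $x$ contains either forbidden pattern if and only if $x^{-1}$ does, which is the corollary. The main obstacle, such as it is, is purely bookkeeping: one must track carefully that under the inversion correspondence, the outer entries (``3'' and ``2'') of a pattern in $y$ play the roles of the middle entries (``4'' and ``1'') in the resulting pattern in $x$, so that value-adjacency of the former in $y$ is exactly the position-adjacency needed to witness the companion pattern in $x$.
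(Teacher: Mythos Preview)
Your proposal is correct and follows essentially the same approach as the paper's proof: use Proposition~\ref{23 adj} to upgrade an instance of one forbidden pattern so that the ``3'' and ``2'' are consecutive integers, and then read off the companion pattern in the inverse permutation via the position/value duality. The paper carries this out with concrete subsequences $(b+1)adb$ and $bda(b+1)$, while you phrase the same idea more conceptually, but the arguments are the same.
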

\begin{proof}
If $x$ contains the pattern $3$-$14$-$2$, then by Proposition~\ref{23 adj}, $x$ contains a subsequence $(b+1)adb$ with $a<b<b+1<d$ and $a$ adjacent to $d$.
Write $x^{-1}=y_1\cdots y_n$.
Then the subsequence $y_ay_by_{b+1}y_d$ of $y_1\cdots y_n$ is an instance of the pattern $2$-$41$-$3$.
If $x$ contains $2$-$41$-$3$, then by Proposition~\ref{23 adj}, $x$ contains a subsequence $bda(b+1)$ with $a<b<b+1<d$ and $d$ adjacent to $a$.
In this case, the subsequence $y_ay_by_{b+1}y_d$  is an instance of the pattern $3$-$14$-$2$.
In either case, $x^{-1}$ is not a Baxter permutation.
\end{proof}


A further property of twisted Baxter permutations and Baxter permutations is helpful.
Given a permutation $x=x_1\cdots x_n\in S_n$ and a subset $V\subseteq [n]$, the \emph{restriction of $x$ to the values $V$} is the subsequence of $x_1\cdots x_n$ consisting of entries in $V$.
Note that this is a restriction of $x$, thought of as a sequence, not a restriction of the map $x:[n]\to[n]$.

\begin{prop}\label{tBax restrict}
If $x\in \tBax_n$ and $0\le p\le n$, then the restriction of $x$ to the values $1,\ldots,p$ is in $\tBax_p$, and the standardization of the restriction of $x$ to the values $p+1,\ldots,n$ is in $\tBax_{n-p}$.
\end{prop}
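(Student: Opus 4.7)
The plan is to prove the contrapositive: if the restriction (or the standardization of the restriction) contains one of the forbidden patterns $3$-$41$-$2$ or $2$-$41$-$3$, then $x$ itself contains that pattern. The key observation is that the relative order of entries is trivially preserved under both restriction-to-values and standardization, so the only way the restriction could acquire a forbidden pattern that $x$ lacks is through the adjacency requirement between the ``$4$'' and the ``$1$'' in these generalized patterns: removing entries from $x$ can only bring two entries closer together, so adjacencies can appear where they did not exist before.

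The strategy is therefore: given an unwanted adjacency in the restriction, find a substitute entry in $x$ lying in the gap between the ``$4$'' and the ``$1$'' that still plays the correct role in the pattern. First I would handle restriction to the values $\{1,\ldots,p\}$. Suppose the restriction $y$ contains $3$-$41$-$2$, so it has a subsequence $c\,d\,a\,b$ with $a<b<c<d\le p$ and $d$ adjacent to $a$ in $y$. In $x$, every entry strictly between $d$ and $a$ belongs to $\{p+1,\ldots,n\}$, hence exceeds $d$. Let $d'$ be the entry immediately preceding $a$ in $x$; then $d'\geq d$, so $a<b<c<d'$ still holds, $c$ still precedes $d'$ in $x$ (either because $d'=d$ or because $d'$ lies strictly between $d$ and $a$ in $x$), and $d'$ is adjacent to $a$ in $x$ by construction. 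So $c\,d'\,a\,b$ is an occurrence of $3$-$41$-$2$ in $x$, a contradiction.

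Next I would handle the standardization of the restriction to values $\{p+1,\ldots,n\}$. Since standardization preserves relative order and adjacency, a forbidden pattern in the standardization lifts to a subsequence $c\,d\,a\,b$ of the unstandardized restriction with $a<b<c<d$, each value exceeding $p$, and $d$ adjacent to $a$ in the restriction. This time every entry strictly between $d$ and $a$ in $x$ lies in $\{1,\ldots,p\}$ and is therefore less than $a$. Let $a'$ be the entry immediately following $d$ in $x$; then $a'\leq a<b$, and $a'$ still precedes $b$ in $x$. So $c\,d\,a'\,b$ is a $3$-$41$-$2$ pattern in $x$, a contradiction. The case of $2$-$41$-$3$ is essentially identical: exactly the same substitutions work, because the adjacent pair in that pattern is again the ``$4$'' (played by $d$) and the ``$1$'' (played by $a$), and the relative order conditions we need are preserved by the same inequalities.

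I do not expect a genuine obstacle. The only thing to be careful about is bookkeeping: verifying in each case that the new subsequence really occurs in the required order in $x$ (this uses only that $d'\geq d$ or $a'\leq a$ and that the other entries of the pattern are not disturbed) and that the substitute entry satisfies the strict inequality it needs to. Proposition~\ref{23 adj} is not required here; the argument works directly from the definitions of the patterns.
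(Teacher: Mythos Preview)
Your proof is correct and follows essentially the same approach as the paper's own proof: both argue by contradiction, locating a forbidden pattern in the restriction and then substituting either the ``$4$'' or the ``$1$'' by the adjacent entry in $x$ coming from the complementary set of values. The only difference is notational (the paper writes the subsequence as $abcd$ in position order rather than labeling by value), and the paper handles both patterns $3$-$41$-$2$ and $2$-$41$-$3$ simultaneously in a single sentence, but the substance is identical.
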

\begin{proof}
To prove the first assertion, suppose to the contrary that the restriction of $x$ to $1,\ldots,p$ contains a subsequence $abcd$ which is an instance either of the pattern $3$-$41$-$2$ or the pattern $2$-$41$-$3$.
Since $x$ avoids both $3$-$41$-$2$ and $2$-$41$-$3$, there are entries of $x$ between $b$ and $c$.
These entries are all $>p$.
If $e$ is the rightmost such entry, then $aecd$ is either a $3$-$41$-$2$-pattern or a $2$-$41$-$3$-pattern in $x$, and this contradiction proves the assertion.
The second assertion is proved similarly, except that we choose $e$ to be the leftmost entry between $b$ and $c$ and conclude that $abed$ is a forbidden pattern in $x$.
\end{proof}

An essentially identical argument proves the following proposition.

\begin{prop}\label{Bax restrict}
If $x\in \Bax_n$ and $0\le p\le n$, then the restriction of $x$ to the values $1,\ldots,p$ is in $\Bax_p$, and the standardization of the restriction of $x$ to the values $p+1,\ldots,n$ is in $\Bax_{n-p}$.
\end{prop}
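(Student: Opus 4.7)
The plan is to mirror the proof of Proposition~\ref{tBax restrict} almost verbatim, adapting the choice of auxiliary entry to the slightly different forbidden patterns. I would argue by contrapositive: assume the relevant restriction (or its standardization) contains one of the patterns $3$-$14$-$2$ or $2$-$41$-$3$, and exhibit a forbidden pattern in $x$ itself, contradicting $x\in\Bax_n$.

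For the first assertion, suppose the restriction of $x$ to values $1,\ldots,p$ contains a subsequence $abcd$ which is an instance of $3$-$14$-$2$ or $2$-$41$-$3$. The two middle entries of this pattern are adjacent in the restriction but not necessarily in $x$; by definition of the restriction, every entry strictly between them in $x$ has value $>p$, and is therefore larger than all four entries of the pattern. If there are no such intervening entries, the forbidden pattern already occurs in $x$ and we are done. Otherwise, I pick a single intervening entry $e$ that sits adjacent in $x$ to the appropriate end of the pattern's middle block: the leftmost such $e$ when the pattern is $3$-$14$-$2$ (so that $e$ plays the role of the ``$4$'' in a new $3$-$14$-$2$), and the rightmost such $e$ when the pattern is $2$-$41$-$3$ (so that $e$ plays the role of the ``$4$'' in a new $2$-$41$-$3$). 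Because $e$ is larger than every entry of the original pattern, a direct check of ranks shows that substituting $e$ for the corresponding middle entry yields an instance of the same forbidden pattern in $x$ with the required adjacency.

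The second assertion is handled by a symmetric argument. A forbidden pattern in the standardization of the restriction to $p+1,\ldots,n$ corresponds to the same forbidden subsequence in that restriction, since standardization preserves relative order. This time the intervening entries in $x$ have values $\le p$ and are therefore smaller than every entry of the pattern, so I would pick $e$ to play the role of the ``$1$'' in a forbidden pattern of $x$: the rightmost intervening entry for a $3$-$14$-$2$ occurrence and the leftmost for a $2$-$41$-$3$ occurrence, so that $e$ becomes adjacent (in $x$) to the ``$4$'' of the pattern.

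The essential content is really identical to that of Proposition~\ref{tBax restrict}. The one place that requires care is bookkeeping: unlike the twisted Baxter patterns $3$-$41$-$2$ and $2$-$41$-$3$, which both place the adjacent block on the same side, the two Baxter patterns $3$-$14$-$2$ and $2$-$41$-$3$ carry the adjacency on opposite sides of the central block. Consequently, I cannot treat the two patterns uniformly and must choose between leftmost and rightmost $e$ according to both the pattern and the assertion being proved. I expect this case split to be the only aspect of writing the proof out fully that needs any attention; once the cases are laid out, each boils down to a routine comparison of ranks.
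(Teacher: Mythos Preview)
Your proposal is correct and follows exactly the approach the paper intends; the paper itself simply says ``an essentially identical argument'' to Proposition~\ref{tBax restrict} proves this, and you have correctly worked out the one extra wrinkle, namely that the choice of leftmost versus rightmost intervening entry $e$ must depend on which of the two Baxter patterns is under consideration (since the adjacent block sits on opposite sides in $3$-$14$-$2$ and $2$-$41$-$3$). Your case analysis is accurate in each of the four sub-cases.
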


\subsection{The Hopf algebra tBax}\label{tBax Hopf subsec}
The definition of twisted Baxter permutations is a special case of the general pattern-avoidance condition of \cite[Theorem~9.3]{con_app}.  
Thus, the twisted Baxter permutations are the minimal congruence class representatives of an $\H$-family of congruences.
We refer to the congruences in this family collectively as $\Theta_{\tB}$.
The associate downward projection map, taking $x\in S_n$ to the minimal element of its $\Theta_{\tB}$-class, is denoted by $\pidown^{\tB}$, and the analogous upward projection map is $\piup_{\tB}$.
Let $y\in S_n$.
We say that $x$ is obtained from $y$ by a \emph{$(3412\to3142)$-move} if there exists an instance $cdab$ of the pattern $3$-$41$-$2$ in $y$ and if $x$ is obtained from $y$ by exchanging $d$ and $a$.
In this case, in particular $x\covered y$.  
We also say, in this case, that $y$ is obtained from $x$ by a \emph{$(3142\to3412)$-move}.
The notions of \emph{$(2413\to2143)$-moves} and \emph{$(2143\to2413)$-moves} are defined analogously.

As a special case of results of~\cite[Section~9]{con_app}, the congruences $\Theta_{\tB}$ have the following properties, the first of which is a repetition for emphasis.

\begin{prop}\label{tBax props}

\noindent
\begin{enumerate}
\item \label{tBax bottoms}
A permutation is the minimal element in its $\Theta_{\tB}$-class if and only if it is a twisted Baxter permutation (\emph{i.e.}, if and only if it avoids $3$-$41$-$2$ and $2$-$41$-$3$). 
\item A permutation is the maximal element in its $\Theta_{\tB}$-class if and only if it avoids $3$-$14$-$2$ and $2$-$14$-$3$.
\item \label{tBax moves}
Suppose $x\covered y$ in the weak order.  Then $x\equiv y$ modulo $\Theta_{\tB}$ if and only if $x$ is obtained from $y$ by either a $(3412\to3142)$-move or a $(2413\to2143)$-move.
\item Let $y\in S_n$.
If some permutation $x\in S_n$ can be obtained from $y$ by a $(3412\to3142)$-move or a $(2413\to2143)$-move, then $\pidown^{\tB}(y)=\pidown^{\tB}(x)$.
Otherwise, $\pidown^{\tB}(y)=y$. 
\item \label{tBax piup}
Let $x\in S_n$.
If some permutation $y\in S_n$ can be obtained from $x$ by a $(3142\to3412)$-move or a $(2143\to2413)$-move, then $\piup_{\tB}(x)=\piup_{\tB}(y)$.
Otherwise, $\piup_{\tB}(x)=x$.
\end{enumerate}
\end{prop}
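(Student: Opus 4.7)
The plan is to realize $\set{\Theta_{\tB,n}}_{n\ge 0}$ explicitly as an $\H$-family in the sense of~\cite{con_app} and then to deduce all five parts as specializations of~\cite[Theorem~9.3]{con_app} together with the accompanying results of~\cite[Section~9]{con_app}. I would define $\Theta_{\tB,n}$ to be the finest lattice congruence on the weak order on $S_n$ that identifies every cover relation corresponding to a $(3412\to3142)$-move or a $(2413\to2143)$-move, and then verify the $\H$-family compatibility axioms. These axioms require that the family behave well with respect to standardization and to the restriction of a permutation to an initial or final value segment, and the verification is morally the content of Proposition~\ref{tBax restrict}: a $3$-$41$-$2$- or $2$-$41$-$3$-pattern in such a substring always lifts to a pattern of the same type in the full permutation, and vice versa.

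Once this setup is in place, part~(\ref{tBax bottoms}) is a direct application of the pattern-avoidance description in~\cite[Theorem~9.3]{con_app}: the basic moves are labeled by $3$-$41$-$2$- and $2$-$41$-$3$-patterns in which the ``$3$'' and ``$2$'' are adjacent in value, and Proposition~\ref{23 adj} lets us replace the ``adjacent in value'' condition by arbitrary instances of the unqualified patterns. Part~(2) follows by an identical argument applied to the reverse moves $(3142\to3412)$ and $(2143\to2413)$; alternatively, it can be deduced from part~(1) via the antiautomorphism $\rv\circ\rp$ of Section~\ref{auto sec}, which sends the pair of patterns $3$-$41$-$2$, $2$-$41$-$3$ to the pair $3$-$14$-$2$, $2$-$14$-$3$.

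Part~(\ref{tBax moves}) is the technical heart. The forward implication is built into the definition of the family. For the converse, $\Theta_{\tB,n}$-classes are intervals in the weak order, so any cover $x\covered y$ with $x\equiv y$ occurs inside a single class, and the general description of within-class covers for a lattice congruence on a finite lattice (cf.~\cite[Section~2]{con_app}) then forces such a cover to coincide with one of the prescribed basic moves. Parts~(4) and~(\ref{tBax piup}) drop out immediately: by~(\ref{tBax moves}), iterating $(3412\to3142)$- and $(2413\to2143)$-moves downward from $y$ strictly decreases $y$ in the weak order while remaining in its $\Theta_{\tB}$-class, so the process terminates precisely at $\pidown^{\tB}(y)$, and the symmetric argument handles $\piup_{\tB}$. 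The main obstacle I anticipate is the verification of the $\H$-family axioms themselves; once they are in place, every remaining step is a formal consequence of the general machinery.
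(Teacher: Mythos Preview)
Your overall plan---recognize the family as an $\H$-family and read everything off~\cite[Section~9]{con_app}---is exactly what the paper does; indeed the paper offers no proof beyond the sentence ``As a special case of results of~\cite[Section~9]{con_app}.''

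There is, however, a genuine gap in your sketch of part~(\ref{tBax moves}). You argue the converse (if $x\covered y$ and $x\equiv y$ then the cover is one of the designated moves) by appealing to ``the general description of within-class covers for a lattice congruence'' in~\cite[Section~2]{con_app}. No such general description exists. For an arbitrary finite lattice, the finest congruence generated by a set of cover relations can and typically does collapse additional covers beyond the generators; nothing in general lattice theory prevents this. The fact that here \emph{only} the generating moves occur as within-class covers is a substantive statement about this particular congruence, not a formal consequence of the interval property.

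The correct route is either (a) to invoke the specific machinery of~\cite[Section~9]{con_app}, where the within-class covers of an $\H$-family congruence are characterized directly, or (b) to use Proposition~\ref{10.2}: since $\Theta_{\tB}=\Theta_{231}\wedge\Theta_{312}$, a cover $x\covered y$ lies in a $\Theta_{\tB}$-class iff it lies in both a $\Theta_{231}$-class and a $\Theta_{312}$-class, i.e.\ (by Propositions~\ref{rho b move} and~\ref{rho t move}) iff there is an intermediate value both to the left and to the right of the swapped pair. A one-line check then shows this is equivalent to the swap being a $(3412\to3142)$- or $(2413\to2143)$-move. Either fix closes the gap; as written, your argument for~(\ref{tBax moves}) does not.
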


The twisted Baxter permutations constitute a graded basis for a Hopf subalgebra tBax of MR.
As a special case of the description in Section~\ref{H fam sec}, the product and coproduct in tBax have the following descriptions.
The product $x\bullet_{\tB}y$ of two twisted Baxter permutations $x$ and $y$ is the sum of all twisted Baxter permutations that are obtained as shifted shuffles of $x$ and $y$.
Thus, for example, 
\begin{align*}
21\bullet_{\tB} 21&=2143+\som{2413} +2431+4213+4231+4321\\
	&=2143+2431+4213+4231+4321\\
21\bullet_{\tB} 132&=21354 + 23154+\som{23514}+23541\\& \qquad+ \,32154 +\som{32514}+32541+\som{35214}+\som{35241}+35421\\
&=21354 + 23154+23541+ 32154 +32541+35421
\end{align*}

For a twisted Baxter permutation $z$, the coproduct $\Delta_\tB(z)$ is obtained in two steps.
First sum $\Delta_S(w)$ over all permutations $w$ in the $\Theta_\tB$-class of $z$.
Then delete all terms which contain permutations that are not twisted Baxter.
For example, let $z=532641\in\tBax_6$.  
By Proposition~\ref{tBax props}, we calculate the $\Theta_\tB$-class of $z$ to be $\set{532641,536241, 563241}$.
\begin{align*}
\Delta_S(563241)&=\emptyset\otimes563241+1\otimes53241+12\otimes3241\\
	&\qquad+\,231\otimes231+3421\otimes21+45213\otimes1+563241\otimes\emptyset\\
\Delta_S(536241)&=\emptyset\otimes536241+1\otimes35241+21\otimes4231\\
	&\qquad+\,213\otimes231+3241\otimes21+42513\otimes1+536241\otimes\emptyset\\
\Delta_S(532641)&=\emptyset\otimes532641+1\otimes32541+21\otimes2431\\
	&\qquad+\,321\otimes321+3214\otimes21+42153\otimes1+532641\otimes\emptyset.
\end{align*}
Thus, 
\begin{align*}
\Delta_\tB(532641)&=\som{\emptyset\otimes563241} + 1\otimes53241 + 12\otimes3241 + 231\otimes231 + 3421\otimes21\\
	&\qquad +\, \som{45213\otimes1} +  \som{563241\otimes\emptyset} + \som{\emptyset\otimes536241} + \som{1\otimes35241}\\
	&\qquad + \,21\otimes4231 + 213\otimes231 + 3241\otimes21 + \som{42513\otimes1} \\
	&\qquad+ \,\som{536241\otimes\emptyset} +\emptyset\otimes532641 + 1\otimes32541 + 21\otimes2431 \\
	&\qquad+ \,321\otimes321 + 3214\otimes21 + 42153\otimes1 + 532641\otimes\emptyset \\
&=\emptyset\otimes532641 + 1\otimes(53241+32541)  + 12\otimes3241 \\
	&\qquad+\,21\otimes(4231+2431)  + (213+231)\otimes231 + 321\otimes321 \\
	&\qquad+ (3421+ 3241+3214)\otimes21 + 42153\otimes1 + 532641\otimes\emptyset	
\end{align*}

Notice that all terms in $\Delta_\tB(532641)$ occur with coefficient 1.
This turns out to be true in general.
\begin{prop}\label{perm coeff 1}
Let $z\in\tBax_n$.
Then each term occurring with nonzero coefficient in $\Delta_\tB(z)$ occurs with coefficient 1.
\end{prop}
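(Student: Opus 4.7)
The plan is to prove the stronger uniqueness statement that distinct elements of a $\Theta_\tB$-class have distinct splittings $\eta_p(w) := (\std(w_1\cdots w_p), \std(w_{p+1}\cdots w_n))$ at every position $p$. Since the coefficient of $x \otimes y$ in $\Delta_\tB(z)$ is the number of $w \equiv z \pmod{\Theta_\tB}$ with $\eta_{|x|}(w) = (x,y)$, this uniqueness immediately yields the proposition.

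The base case is a single cover. If $w \covered w'$ and $w \equiv w' \pmod{\Theta_\tB}$, then by Proposition~\ref{tBax props}(\ref{tBax moves}) the cover swaps adjacent entries $a < d$ at positions $j, j+1$, and some intermediate value $c \in (a,d)$ must sit among the first $j-1$ entries (since the move is induced by a $3$-$14$-$2$- or $2$-$14$-$3$-pattern). I would verify by direct rank computation, in each of the three regimes $j+1 \le p$, $j \ge p+1$, and $j = p$, that $\eta_p(w) \neq \eta_p(w')$: the first two regimes produce adjacent transpositions in the left or right standardization, and in the boundary case the presence of $c$ among the first $p-1$ entries forces a nontrivial shift in the rank of the $p$th entry of the left standardization.

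The main obstacle is extending from single covers to arbitrary pairs in the class, since $\eta_p$ changes at each cover but could in principle drift and then return along a longer chain. The plan is to control this with the identity
\[\Sigma_p(w) := \sum_{i=1}^p w_i = \inv(w) - \inv(\std(w_1\cdots w_p)) - \inv(\std(w_{p+1}\cdots w_n)) + \binom{p+1}{2},\]
obtained by classifying the inversions of $w$ as left-block, right-block, or crossing. Non-boundary $\Theta_\tB$-covers preserve both $V_p(w):=\{w_1,\ldots,w_p\}$ and $\Sigma_p(w)$; boundary covers strictly increase $\Sigma_p$ and raise $V_p$ in the Gale order. For comparable $w < w'$ in the class with $\eta_p(w) = \eta_p(w')$, the identity forces $\Sigma_p(w') > \Sigma_p(w)$, hence at least one boundary cover along any up-chain from $w$ to $w'$; tracking how each boundary move cyclically rotates the last $\alpha + 1 \ge 2$ entries of the left standardization against the adjacent transpositions from non-boundary moves then contradicts the hypothesis that $\eta_p$ returns to $(x,y)$ at $w'$. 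The incomparable case reduces to the comparable case by passing to $w \vee w'$ and $w \wedge w'$ inside the class (both of which lie in the class because congruence classes of a lattice are intervals), together with an induction on the length of $[w \wedge w', w \vee w']$ that uses the single-cover result and the fact that distinct upward $\Theta_\tB$-moves from a common lower bound produce distinct changes to $\eta_p$.
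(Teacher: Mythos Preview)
Your setup is sound: the identity $\Sigma_p(w) = \inv(w) - \inv(\std(w_1\cdots w_p)) - \inv(\std(w_{p+1}\cdots w_n)) + \binom{p+1}{2}$ is correct, the single-cover analysis is fine, and you correctly deduce that if $w<w'$ are congruent with $\eta_p(w)=\eta_p(w')$ then any saturated chain between them must contain a boundary cover. But the argument breaks down at the step you summarize as ``tracking how each boundary move cyclically rotates the last $\alpha+1\ge 2$ entries of the left standardization.'' A boundary up-move replaces $a$ in position $p$ by $d$; in $\std(L)$ this sends the value at position $p$ from its rank $r$ to a larger rank $r'$ and decreases by one the value at each position $i<p$ with $w_i\in(a,d)$. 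That is a cyclic permutation of the \emph{values} $r,r+1,\ldots,r'$, not of the last $r'-r+1$ \emph{positions}; the affected positions can be scattered anywhere in $[1,p]$. With the corrected description, boundary up-moves move $\std(L)$ strictly down in weak order while non-boundary left moves move it up by a cover, so there is no monotone quantity forcing a contradiction, and you have not supplied one. The reduction for incomparable $w,w'$ is likewise only a hope: knowing $\eta_p(w)=\eta_p(w')$ tells you nothing directly about $\eta_p(w\wedge w')$ or $\eta_p(w\vee w')$, and ``distinct upward $\Theta_\tB$-moves from a common lower bound produce distinct changes to $\eta_p$'' does not by itself prevent the two branches from re-merging later.

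The paper's proof avoids chain-tracking entirely. It sets $y=v\wedge w$ (which lies in the class), finds the first position $i$ where $v$ or $w$ departs from $y$, shows directly that $v_i\neq w_i$ (else one could build a larger lower bound), and then, assuming $p\ge i$ and $v_i>w_i$, follows a saturated chain from $y$ to $v$ to locate the cover at which $v_i$ and $w_i$ swap; Proposition~\ref{tBax props}(\ref{tBax moves}) supplies an intermediate value to the left, and a short maximality argument pins that value among $v_1,\ldots,v_{i-1}$, forcing $\std(v_1\cdots v_p)\neq\std(w_1\cdots w_p)$. This is essentially your single-cover boundary argument, but applied once at a carefully chosen location rather than accumulated along a chain.
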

\begin{proof}
Suppose $v$ and $w$ are distinct permutations in the $\Theta_\tB$-class of $z$.
It is enough to show that, for any $p$ with $0\le p\le n$, either $\std(v_1,\ldots,v_p)\neq\std(w_1,\ldots,w_p)$ or $\std(v_{p+1},\ldots,v_n)\neq\std(w_{p+1},\ldots,w_n)$ or both.

Fix such a $p$ and let $y=v\meet w$, the meet of $v$ and $w$ in the weak order.
Since $\Theta_\tB$ is a lattice congruence, $y$ is also in the $\Theta_\tB$-class of $z$.
Choose $i$ to be the smallest positive index such that either $v_i$ differs from $y_i$ or $w_i$ differs from $y_i$ or both.
Since $v$ and $w$ are distinct, such an $i$ exists and is less than $n$.
We claim that $v_i\neq w_i$.

Suppose to the contrary that $v_i=w_i$ and let $k$ be such that $y_k=v_i$.
Then $k>i$, because $v$ and $y$ agree in positions left of position $i$.
For every $j$ with $i\le j<k$, the element $y_j$ precedes the entry $v_i$ in $y$ but follows $v_i$ in $v$.
Since $y\le v$ in the weak order, and since the weak order is the containment order on inversion sets, we conclude that $y_j<v_i=y_k$.
Let $y'$ be the permutation $y_1\cdots y_{i-1}y_ky_i\cdots y_{k-1}y_{k+1}\cdots y_n$, obtained by removing $y_k$ from $y$ and re-inserting it between $y_{i-1}$ and $y_i$.
Passing from $y$ to $y'$ creates some inversions, but destroys no inversions, so $y'>y$.
The inversions created in $y'$ are inversions of $v$, so $y'\le v$.
By the same argument, $y'\le w$.
This contradicts our choice of $y$ to be the meet of $v$ and $w$, thus proving the claim.

If $p<i$ then $\std(v_{p+1},\ldots,v_n)\neq\std(w_{p+1},\ldots,w_n)$ because $v$ and $w$ are distinct, so assume $p\ge i$.
By the claim, we take, without loss of generality, $v_i>w_i$.
We complete the proof by showing that there exists $l$ with $1\le l<i$ such that $w_i<v_l<v_i$.
Since $w_l=v_l$, this implies that $\std(v_1,\ldots,v_p)\neq\std(w_1,\ldots,w_p)$.

Since $(v_i,w_i)$ is not an inversion in $w$, it is also not an inversion in $y$.
Consider a saturated chain in the weak order from $y$ to $v$.
Since $(v_i,w_i)$ is not an inversion in $y$ but is an inversion of $v$, as we follow the chain from $y$ up to $v$, we eventually come to an edge $x\covered x'$ such that $x$ and $x'$ differ only in the order of the adjacent entries $v_i$ and $w_i$.
Since $\Theta_\tB$-classes are intervals, the entire chain is contained in the same $\Theta_\tB$-class, so Proposition~\ref{tBax props}.\ref{tBax moves} says in particular that there exists an entry $a$ to the left of $w_i$ and $v_i$ in $x$ and $x'$ with $w_i<a<v_i$.
Choose $a$ to be the largest entry to the left of $w_i$ and $v_i$ with $w_i<a<v_i$.
If $a$ is not $v_l$ for some $1\le l<i$, then as we proceed further up the chain from $x'$ to $v$, we eventually come to an edge $u\covered u'$ corresponding to swapping adjacent entries $a$ and $v_i$.
But then Proposition~\ref{tBax props}.\ref{tBax moves} says that there exists an entry $a'$ to the left of $a$ and $v_i$ in $u$ and $u'$ with $a<a'<v_i$.
Since $u$ is weakly higher up the chain than $x'$ and since $(v_i,a')$ is not an inversion in $u$, we conclude that $(v_i,a')$ is not an inversion in $x$ and $x'$.
Thus $a'$ is to the left of $v_i$ in $x$ and $x'$, contradicting our choice of $a$ to be maximal.
This contradiction shows that $a=v_l$ for some $1\le l<i$, thus completing the proof.
\end{proof}

The Hopf algebra tBax is closely related to LR and LR$'$.
Recall from Section~\ref{NSym sec} that the congruence $\Theta_{\operatorname{sub}}$, which defines the Hopf algebra NSym, is the join of the congruences $\Theta_{231}$ and $\Theta_{312}$ in the lattice of partitions of $S_n$, and thus in the lattice of congruences of the weak order on $S_n$.
The congruence $\Theta_\tB$, which defines the Hopf subalgebra tBax, is just the opposite.
The following proposition is \cite[Proposition~10.2]{con_app}.
\begin{prop}\label{10.2}
The congruence $\Theta_\tB$ is the meet of $\Theta_{231}$ and $\Theta_{312}$ in the lattice of congruences of the weak order on $S_n$ (or equivalently in the lattice of partitions of $S_n$).
\end{prop}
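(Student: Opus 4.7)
The plan is to reduce the statement to a cover-by-cover comparison of which pairs each congruence collapses. Two standard facts support the reduction. First, the intersection of two lattice congruences, viewed as equivalence relations, is itself a lattice congruence; hence it is their meet both in the congruence lattice and in the lattice of partitions (which settles the parenthetical in the statement). Second, any lattice congruence on a finite lattice is determined by the set of cover pairs it collapses. So it suffices to show, for every cover pair $x\covered y$ in the weak order on $S_n$, that $x\equiv y$ modulo $\Theta_\tB$ if and only if $x\equiv y$ modulo each of $\Theta_{231}$ and $\Theta_{312}$.

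Fix such a cover pair; it comes from swapping a pair of adjacent entries $d$ and $a$ with $d>a$ and $d$ immediately preceding $a$ in $y$. I would next translate the three collapse conditions using Propositions~\ref{rho b move}, \ref{rho t move}, and~\ref{tBax props}.\ref{tBax moves}: the pair lies in $\Theta_{231}$ iff $y$ contains some entry $b'$ to the left of $d$ with $a<b'<d$; it lies in $\Theta_{312}$ iff $y$ contains some entry $b''$ to the right of $a$ with $a<b''<d$; and it lies in $\Theta_\tB$ iff the swap is a $(3412\to3142)$- or a $(2413\to2143)$-move. The forward inclusion falls out immediately from the definitions: in a $(3412\to3142)$-move on an instance $cdab$, the entry $c$ is a witness on the left and $b$ is a witness on the right, and the $(2413\to2143)$ case is the same with the roles of $b$ and $c$ swapped.

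The substance is the converse. Given witnesses $b'$ to the left of $d$ and $b''$ to the right of $a$, both strictly between $a$ and $d$ in value, adjacency of $d$ and $a$ forces $b'$ and $b''$ to occupy distinct positions in $y$, so they are distinct entries. A case split on whether $b'>b''$ or $b'<b''$ then exhibits the subsequence $b'\,d\,a\,b''$ as an instance of the pattern $3$-$41$-$2$ (ranks $3,4,1,2$) or of $2$-$41$-$3$ (ranks $2,4,1,3$) respectively, with $d$ immediately before $a$; in either case the swap of $d$ and $a$ is one of the two allowed $\Theta_\tB$-moves.

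I do not anticipate a serious obstacle. The reduction to cover pairs is routine, and the two-case analysis comparing $b'$ with $b''$ is the only step with combinatorial content; everything else amounts to quoting the cover-pair characterizations already recorded in the paper.
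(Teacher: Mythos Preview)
Your argument is correct. The paper does not prove this proposition itself; it simply cites it as \cite[Proposition~10.2]{con_app}. Your self-contained proof works: the reduction to cover pairs is justified because a congruence on a finite lattice is determined by the covers it collapses (congruence classes are intervals, so any saturated chain inside a class witnesses the equivalence cover-by-cover), and your two-case comparison of the left witness $b'$ with the right witness $b''$ correctly identifies the conjunction of a $(231\to213)$-move and a $(312\to132)$-move with the presence of a $3$-$41$-$2$ or $2$-$41$-$3$ instance on the swapped pair.
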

That is, two permutations are congruent modulo $\Theta_\tB$ if and only if they are congruent modulo $\Theta_{231}$ and modulo $\Theta_{312}$.

\begin{remark}\label{smallest sub Hopf}
Proposition~\ref{10.2} is a weaker statement than the statement that tBax is the smallest Hopf subalgebra of MR containing LR and LR$'$.
However, we have no evidence against the stronger statement.
\end{remark}

\begin{remark}\label{tBax auto}
Several automorphisms and antiautomorphisms of tBax are inherited from MR.
The algebra automorphism/coalgebra antiautomorphism $\rp$ of MR takes $(3412\to3142)$-moves to $(2143\to2413)$-moves and takes $(2413\to2143)$-moves to $(3142\to3412)$-moves, and thus, by Proposition~\ref{tBax props}.\ref{tBax moves}, restricts to an algebra automorphism/coalgebra antiautomorphism of tBax.
The algebra antiautomorphism/coalgebra automorphism $\rv$ of MR has the same effect on $(3412\to3142)$-moves and $(2413\to2143)$-moves, and thus restricts to an algebra antiautomorphism/coalgebra automorphism of tBax.
Their composition $\rv\circ\rp$ is a Hopf algebra antiautomorphism of tBax.
\end{remark}

\begin{remark}\label{tBax dual}
Recall from Section~\ref{quot sec} that the Hopf algebra LR is self-dual, and that its self-duality can be proved by verifying that the Hopf algebra homomorphisms LR$\into$MR$\stackrel{\cong}{\longleftrightarrow}$MR$^*\onto$LR$^*$ is an invertible map of graded vector spaces.
We remark that the analogous argument fails when applied to tBax.
That is, the map tBax$\into$MR$\stackrel{\cong}{\longleftrightarrow}$MR$^*\onto$tBax$^*$ is not invertible.
For example, one can verify that the matrix of the restriction of this map to $\tBax_4$ has two identical rows.
Of course, the failure of this argument does not rule out the existence of some other isomorphism between tBax and tBax$^*$.
\end{remark}

\section[The Hopf algebra dRec of diagonal rectangulations]{The Hopf algebra of diagonal rectangulations}\label{rec sec}
As described above, the Hopf algebra of twisted Baxter permutations has, like LR, LR$'$, and NSym, a realization in terms of pattern avoiding permutations.
The product and coproduct are calculated by computing products and coproducts in MR, and then deleting terms which don't satisfy the avoidance conditions.
However, LR,  LR$'$, and NSym admit intrinsic descriptions of their products and coproducts in terms of the fundamental combinatorial objects (planar binary trees and subsets) indexing their bases, without first embedding into a larger Hopf algebra \cite{LRorder,GKLLRT}.
In this section, we supply a similar combinatorial realization of the Hopf algebra tBax.
Specifically, we define diagonal rectangulations and define a product and coproduct on the graded vector space spanned by diagonal rectangulations.
The product and coproduct on diagonal rectangulations define a Hopf algebra, which we call dRec.
The proof that dRec is a Hopf algebra appears in Section~\ref{isom sec}, where it is shown that dRec is isomorphic to the Hopf algebra tBax.

\subsection{Diagonal rectangulations}\label{diag rec sec}
The general notion of a rectangulation of a rectangle arises in integrated circuit design.  
(See references in \cite[Section~1]{ABP}.)
We are interested in a particular class of rectangulations of a square which we call \emph{diagonal rectangulations}.
Ackerman, Barequet, and Pinter \cite[Lemma~5.2]{ABP} showed that diagonal rectangulations are counted by the Baxter number, by showing that they satisfy a recurrence from~\cite{CGHK}.
(A proof using nonintersecting lattice paths is given in \cite{FFNO}.)
Thus diagonal rectangulations emerge as a candidate to index the basis of a Hopf algebra isomorphic to tBax.

Let $S$ be a square with sides parallel to the coordinate axes.  
Consider a subdivision of $S$ into finitely many rectangles.
Necessarily, all the sides of the rectangles are also parallel to the coordinate axes. 
We define the \emph{walls} of the decomposition to be the maximal line segments contained in the union of the boundaries of the rectangles, excepting the four edges of $S$, which are not considered to be walls of the decomposition.
The interior of the line segment connecting the top-left corner of~$S$ to the bottom-right corner is called the \emph{diagonal} of $S$.
Let $X$ be a set of $n-1$ distinct points on the diagonal of $S$.  
A diagonal rectangulation of $(S,X)$ is a subdivision of $S$ into rectangles such that no two walls intersect in their interiors, such that  every wall of the decomposition contains a point of $X$, and such that every point of $X$ lies on a wall.  

\begin{prop}\label{n n-1}
A diagonal rectangulation of $(S,X)$ with $|X|=n-1$ has exactly $n-1$ walls and has exactly $n$ rectangles.
\end{prop}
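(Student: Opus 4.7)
The plan is to prove the two claims separately: first the count of walls, then the count of rectangles.

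For the wall count, the key geometric observation is that each wall is horizontal or vertical (since walls lie in boundaries of axis-parallel rectangles), while the diagonal of $S$ has slope $-1$. Thus each wall meets the diagonal in at most one point, so each wall contains at most one point of $X$. Combined with the hypothesis that every wall contains a point of $X$, each wall contains exactly one such point. Conversely, the no-interior-intersection condition forces each point of $X$ to lie in the interior of at most one wall, since two walls both containing $p$ in their interiors would intersect in their interiors. Combined with the hypothesis that every point of $X$ lies on a wall, this yields a bijection between walls and points of $X$, so the number of walls equals $|X|=n-1$.

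For the rectangle count, I would apply Euler's formula to the planar graph formed by the rectangulation, whose vertices are the corners of rectangles (including corners of $S$ and T-junctions), whose edges are the maximal segments between adjacent vertices along rectangle sides, and whose faces are the rectangles together with one outer face. The no-interior-intersection condition ensures that every interior vertex is a T-junction, meaning the endpoint of one wall lies in the interior of another. Writing $w=n-1$, the $2w$ wall endpoints break into three types: $i$ at interior T-junctions, $b$ at T-junctions on the boundary of $S$, and $c$ at corners of $S$, with $i+b+c=2w$. Then $V=4+i+b$, since the four corners are already counted. For edges, the boundary cycle of $S$ is subdivided by its $b$ boundary T-junctions into $4+b$ edges, and each wall is subdivided by the T-junctions in its interior, contributing $w+i$ edges in total (since the sum over walls of the number of interior T-junctions on each wall is $i$). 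Euler's formula $V-E+F=2$ then yields $F=2+w$, and subtracting the outer face gives $n=w+1$ rectangles.

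The main obstacle is the careful bookkeeping in the edge and vertex counts, in particular verifying that wall endpoints at corners of $S$ are handled correctly. The cleanness of the final count relies on the observation that $c$ cancels out: wall endpoints at corners neither create new vertices nor subdivide the boundary, so they contribute to neither $V$ nor $E$, and the final count depends only on $w$.
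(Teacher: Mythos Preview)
Your wall-count argument has a gap. You correctly observe that each point of $X$ lies in the \emph{interior} of at most one wall, but this does not imply that each point lies on at most one wall: a point $p\in X$ could be an endpoint of a wall $W_1$ while lying in the interior of a perpendicular wall $W_2$ (a T-junction at $p$), in which case both $W_1$ and $W_2$ contain $p$ and your wall-to-point correspondence fails to be injective. The paper's proof confronts this by arguing that a shared diagonal point forces an endpoint configuration that is locally inconsistent with a decomposition into rectangles; you need some such geometric argument to close the case where $p$ is an endpoint of a wall.

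Your rectangle count via Euler's formula is correct and takes a genuinely different route from the paper. The paper removes an arbitrary wall $L$, extends the $k$ walls that terminate on $L$ so as to restore a rectangulation with one fewer wall, and checks that the $k+2$ rectangles incident to $L$ become $k+1$ rectangles; induction on the number of walls then gives $n$ rectangles from $n-1$ walls. Your Euler computation is a clean one-shot alternative once the vertex and edge bookkeeping is in place, and the observation that the corner count $c$ cancels is a nice touch. The paper's approach, by contrast, avoids the planar-graph machinery entirely and is more elementary. One small omission in your argument: Euler's formula in the form $V-E+F=2$ requires the graph to be connected, which you should note follows because every wall endpoint lies either on $\partial S$ or in the interior of another wall, so every wall is linked (through a finite chain of T-junctions) to the boundary cycle.
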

\begin{proof}
Let $R$ be a diagonal rectangulation.
Because the points of $X$ are distinct points on the diagonal, every wall defining $R$ contains exactly one point in $X$.
Suppose two walls contain the same point $x$ in $X$.
Since the walls have disjoint interiors, $x$ is an endpoint of both.
By the definition of the walls of $R$, the two walls are not contained in the same line, and in particular, there are only two walls containing $x$.
But now, considering $R$ locally about $x$, we see that $R$ is not a decomposition into rectangles, and this contradiction shows that every point in $X$ is on exactly one wall of $R$.
Thus $R$ has $n-1$ walls.

We conclude the proof by arguing that any decomposition of $S$ into rectangles with $n-1$ interior-disjoint walls has exactly $n$ rectangles.
Indeed, let $L$ be any wall in such a decomposition.
There are two walls (or edges of $S$) whose interiors contain the endpoint of $L$, and some walls $L_1,\ldots,L_k$ whose endpoints are contained in the interior of $L$.
The situation is illustrated in Figure~\ref{wall}.a, where $L$ is the red (or gray) wall.
\begin{figure}[t]
\begin{tabular}{cccc}
\scalebox{1}{\includegraphics{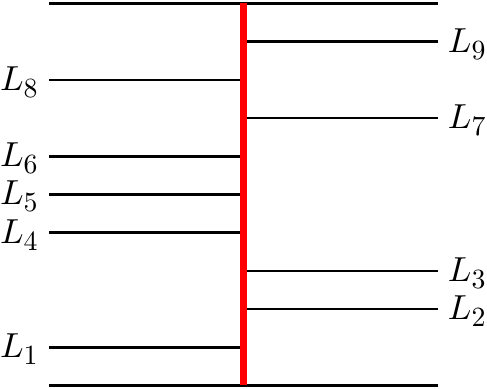}}&&&\scalebox{1}{\includegraphics{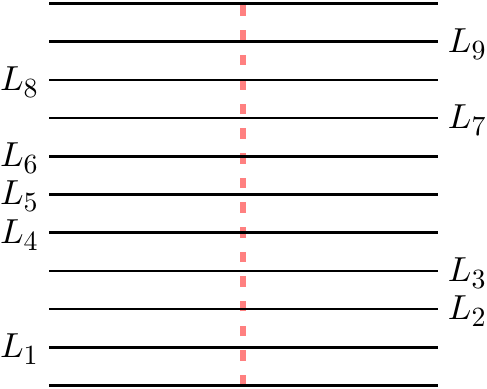}}\\
(a)&&&(b)
\end{tabular}
\caption{An illustration for the proof of Proposition~\ref{n n-1}.}
\label{wall}
\end{figure}
The number of rectangles incident to the interior of $L$ is $k+2$.
Removing $L$ and extending the walls $L_1,\ldots,L_k$, as illustrated in Figure~\ref{wall}.b, we obtain a decomposition of $S$ into rectangles with $n-2$ walls.
By induction, there are $n-1$ rectangles in the new decomposition.
The number of rectangles in the new decomposition intersecting the interior of $L$ is $k+1$, and thus the original decomposition has $n$ rectangles.  
\end{proof}

The precise size of $S$ and the exact choice of points in $X$ is irrelevant to the combinatorics of diagonal rectangulations.
Indeed, given a rectangulation $R$ of $(S,X)$, any dilation applied to $S$ can also be applied to $X$ and $R$ to give a well-defined rectangulation of the dilated square.
Furthermore, given any choice $X'$ of $n-1$ points on the diagonal of $S$, there is a natural choice of a rectangulation $R'$ of $(S,X')$ corresponding to $R$.
(This is most easily understood by considering shifting one point of $X$ at a time, without moving it past any other point of $X$.)
These considerations define an equivalence relation on the set of all diagonal rectangulations of pairs $(S,X)$, where $S$ and $X$ vary.
The fundamental combinatorial object is an equivalence class of such rectangulations.
We call such an equivalence class a \emph{diagonal rectangulation of size $n$}. 
For convenience, we blur the distinction between diagonal rectangulations of size $n$ (\emph{i.e.}\ equivalence classes) and equivalence-class representatives.
For definiteness, we often consider the \emph{integral representative} of the equivalence class.
This is the representative such that $S=[0,n]\times[0,n]$ and $X$ is the set of integer points on the diagonal.
In particular, we often specify a diagonal rectangulation by a representative picture.
Figure~\ref{rec ex} shows (the integral representative of) a diagonal rectangulation of size 20.
The diagonal is shown in gray.
\begin{figure}[t]
\centerline{\scalebox{1}{\includegraphics{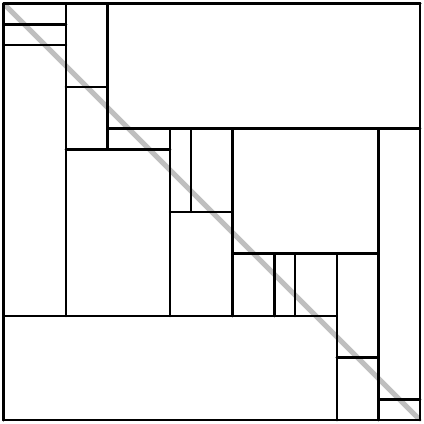}}}
\caption{A rectangulation of size 20}
\label{rec ex}
\end{figure}

An alternate characterization of diagonal rectangulations is useful. 
\begin{prop}\label{alt char}
A decomposition of a square into rectangles is a diagonal rectangulation of size $n$ if and only if it has $n$ rectangles and each rectangle's interior intersects the diagonal.
\end{prop}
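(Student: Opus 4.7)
The plan is to prove each direction of the biconditional separately. For the forward direction, given a diagonal rectangulation of $(S, X)$ with $|X| = n - 1$, Proposition~\ref{n n-1} yields $n$ rectangles and $n-1$ walls. Each wall is axis-parallel while the diagonal has slope $-1$, so each wall meets the diagonal in at most one point; combined with the defining requirement that every wall contain a point of $X$ and every point of $X$ lie on a wall, each wall meets the diagonal in exactly one point, namely its unique point of $X$. Tracing the diagonal from the top-left corner of $S$ to the bottom-right corner, each exit from a rectangle is a wall-crossing at a point of $X$, and by convexity of rectangles no rectangle is entered twice. The $n-1$ crossings therefore partition the diagonal into $n$ subsegments, each lying in a distinct rectangle's interior, so every rectangle's interior meets the diagonal.

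For the converse, suppose the decomposition has $n$ rectangles, each with interior meeting the diagonal. The key geometric obstruction is that neither a $4$-corner nor a T-junction on the diagonal can occur. At any $4$-corner $p = (p_x, p_y)$, the northeast rectangle's interior lies in the open quadrant $\{x > p_x, y > p_y\}$, on which $x + y > p_x + p_y$, while the southwest's interior satisfies $x + y < p_x + p_y$; the diagonal value $x + y = n$ cannot be attained in both open quadrants, so at least one of the four incident rectangles fails to meet the diagonal, contradicting the hypothesis. A case analysis over the four orientations of an on-diagonal T-junction gives the same contradiction: at each such configuration one of the three incident rectangles has its interior strictly on one side of the line $x + y = n$.

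Having excluded $4$-corners, every interior vertex is a degree-$3$ T-junction, so Euler's formula on the planar subdivision collapses (after a standard count of vertices, edges, and faces via the numbers of boundary T-junctions, interior T-junctions, and walls) to $w = n - 1$, where $w$ is the number of walls. Now define $X$ to be the set of points where the diagonal meets some wall. Since no T-junction lies on the diagonal, each point of $X$ lies in the interior of a unique wall, giving an injection $X \hookrightarrow \{\text{walls}\}$. By convexity, the diagonal's intersection with each rectangle is a sub-segment; the hypothesis forces all $n$ sub-segments to be nonempty, so the diagonal is partitioned into $n$ consecutive pieces with $n - 1$ transitions between them, each transition contributing a point to $X$. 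Thus $|X| = n - 1 = w$, the injection is a bijection, and every wall contains exactly one point of $X$, verifying the three axioms of a diagonal rectangulation of $(S, X)$. The main obstacle is the geometric case analysis ruling out $4$-corners and on-diagonal T-junctions; both reduce to the same open-half-plane observation, after which Euler's formula and counting-injection finish the proof.
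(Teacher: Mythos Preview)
Your proof is correct, but both directions take a longer route than the paper's.

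For the forward direction, the paper argues by contradiction: if some rectangle $U$ lies entirely below the diagonal, then of the two walls meeting at the top-right corner of $U$, one terminates at that corner and therefore contains no point of $X$; the case of a rectangle above the diagonal is symmetric. Your counting argument (trace the diagonal, the $n-1$ wall-crossings cut it into $n$ pieces lying in $n$ distinct rectangles) is valid but needs a bit more care to be airtight---in particular, you should note that each point of $X$ lies in the interior of its wall (else it would sit on two walls, which Proposition~\ref{n n-1} rules out), so that each of the $n-1$ points of $X$ really is a transition point.

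For the converse, the paper's argument is much shorter. Having defined $X$ as the $n-1$ transition points along the diagonal, the paper simply shows that every wall crosses the diagonal: if some wall did not, its endpoint closest to the diagonal would be the corner of a rectangle whose interior misses the diagonal, contradicting the hypothesis. This single observation dispatches the converse without any Euler-formula count of walls, any T-junction case analysis, or any bijection argument. Your approach---excluding $4$-corners and on-diagonal T-junctions, invoking Euler's formula to obtain $w = n-1$, then matching walls to points of $X$ by cardinality---reaches the same conclusion with heavier machinery. One genuine payoff of your route is that the $4$-corner exclusion explicitly verifies the ``no two walls intersect in their interiors'' clause of the definition, which the paper's write-up leaves implicit (it follows from the same half-plane observation you give, since a $4$-corner forces one incident rectangle off the diagonal).
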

\begin{proof}
Suppose $R$ is a diagonal rectangulation of size $n$.
Then Proposition~\ref{n n-1} says that $R$ has $n$ rectangles.
If one of these rectangles $U$ is contained completely below the diagonal, then consider the two walls of $R$ containing the top-right corner of~$U$.
One of these walls ends at the corner, and in particular that wall contains no point of $X$.
This contradiction, and the analogous contradiction for rectangles contained above the diagonal, complete one direction of the proof.

Conversely, suppose that $R$ is a decomposition of $S$ into $n$ rectangles, with each rectangle's interior intersecting the diagonal.
Moving along the diagonal of $S$, we visit each rectangle exactly once, and there are exactly $n-1$ points where we cross from one rectangle to another.
Let $X$ be the set of these $n-1$ points.
It is immediate that every point in $X$ is contained in a wall of $R$. 
To complete the proof, we need to show that every wall of $R$ contains a point in $X$.
Equivalently, we need to show that every wall of $R$ crosses the diagonal.
But if some wall of $R$ does not cross the diagonal, then its closest point to the diagonal is the corner of some rectangle whose interior does not intersect the diagonal.
\end{proof}

\begin{remark}\label{mosaic}
Diagonal rectangulations are in bijection with the \emph{Mosaic floorplans} defined in~\cite{HHCGDCG} in connection with a proposed floorplanning algorithm for VLSI circuit design.
Consider the class of (not necessarily diagonal) rectangulations with exactly $n-1$ walls, such that the interiors of walls are disjoint.
Mosaic floorplans of size $n$ are equivalence classes of such rectangulations under a certain equivalence relation which is coarser than topological equivalence.
In~\cite{YCCG}, mosaic floorplans are shown to be counted by the Baxter number.
As a special case of \cite[Theorem~4]{ABP}, each mosaic floorplan has exactly one representative that is a diagonal rectangulation.
\end{remark}

\subsection{Product and coproduct}\label{Rec operations sec}
Let $\dRec_n$ stand for the set of rectangulations of size $n$.
The set $\dRec_0$ has a single element, the rectangulation of a $0\times0$ square having no rectangles.
This empty rectangulation is represented by the symbol $\emptyset$.
The set $\dRec_1$ also has a single element, the ``decomposition'' of a square into a single square.
The Hopf algebra dRec is the graded vector space $\K[\dRec_\infty]=\bigoplus_{n\ge 0}\K[\dRec_n]$, with the product and coproduct that we now describe.

Given a set of interior-disjoint, axis-parallel line segments in $S$, a diagonal rectangulation~$R$ is a \emph{completion} of the set of line segments if every line segment in the set is contained in a wall of $R$.
A \emph{principal subsquare} of the square $S$ is a square~$S'$ contained in $S$ such that the diagonal of $S'$ is contained in the diagonal of $S$.
Given a diagonal rectangulation $R$ of $S$ and a principal subsquare $S'$ of $S$, the \emph{restriction} $R'$ of $R$ to $S'$ is the following decomposition of $S'$ into rectangles:
Each rectangle of $R'$ is an intersection $U\cap S'$ such that $U$ is a rectangle of $R$ and $U\cap S'$ has nonempty interior.
Each rectangle of $R'$ crosses the diagonal of $S'$, and thus $R'$ is a diagonal rectangulation by Proposition~\ref{alt char}.

Let $p$ be an integer with $0\le p\le n$.
Consider the integral representative of a diagonal rectangulation $R$, as defined in Section~\ref{diag rec sec}.
(Thus $S=[0,n]\times[0,n]$ and $X$ is the set of integer points on the diagonal.)
Define the \emph{$p\th$ top-left restriction} of $R$ to be the diagonal rectangulation $\TL_p(R)$ of size $p$ obtained as the restriction of $R$ to the principal subsquare $[0,p]\times[n-p,n]$.
For an integer~$q$ with $0\le q\le n$, define the \emph{$q\th$ bottom-right restriction} of $R$ to be the diagonal rectangulation $\BR_q(R)$ of size~$q$ obtained as the restriction of $R$ to the principal subsquare $[n-q,n]\times[0,q]$.

The product in dRec, written $\bullet_\dR$ or $\bullet$\,, is described as follows:  
If $R_1\in \dRec_p$ and $R_2\in \dRec_q$, where $p+q=n$, then $R_1\bullet_\dR R_2$ is the sum over all rectangulations $R\in\dRec_n$ such that $\TL_p(R)=R_1$ and $\BR_q(R)=R_2$.
For the purpose of computations, the product $R_1\bullet_\dR R_2$ can also be described as follows.
Represent $R_1$ and $R_2$ by their integral representations on the respective squares $[0,p]\times[0,p]$ and $[0,q]\times[0,q]$, and represent rectangulations appearing in $R_1\bullet_\dR R_2$ by their integral representations on $[0,n]\times[0,n]$.
Take the $p-1$ walls of $R_1$ and translate them upwards~$q$ units, and take the $q-1$ walls of $R_2$ and translate them $p$ units to the right.
The product is the sum of those diagonal rectangulations of size $n$ which are completions of the union of the two sets of line segments, such that the $n-1$ walls of the completion are obtained by extending (if necessary) the $n-2$ segments already present and adding a new wall containing the diagonal point $(p,n-p)$.
We give two examples in Figures~\ref{prod fig 1} and~\ref{prod fig 2}, marking, for the sake of clarity, the point $(p,n-p)$ in each diagonal rectangulation in the product. 

\begin{figure}
$\begin{array}{l}
\raisebox{-5 pt}{\includegraphics{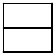}}\,\bullet\,\raisebox{-5 pt}{\includegraphics{prod_ex_1_LHS}}
=\mbox{sum of completions of}\quad\raisebox{-12 pt}{\includegraphics{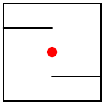}}\\[20 pt]
\qquad\qquad=
\raisebox{-12 pt}{\includegraphics{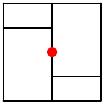}}+ 
\raisebox{-12 pt}{\includegraphics{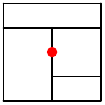}}+ 
\raisebox{-12 pt}{\includegraphics{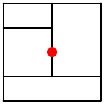}}+ 
\raisebox{-12 pt}{\includegraphics{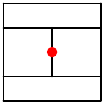}}+ 
\raisebox{-12 pt}{\includegraphics{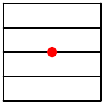}}
\end{array}
$
\caption{A product calculation in the Hopf algebra dRec}
\label{prod fig 1}
\end{figure}
\begin{figure}
$\begin{array}{l}
\raisebox{-5 pt}{\includegraphics{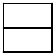}}\,\bullet\,\raisebox{-7.5 pt}{\includegraphics{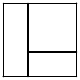}}
=\mbox{sum of completions of}\quad\raisebox{-15 pt}{\includegraphics{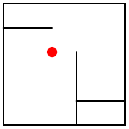}}\\[23 pt]
\quad=
\raisebox{-15 pt}{\includegraphics{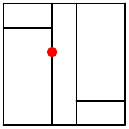}}+ 
\raisebox{-15 pt}{\includegraphics{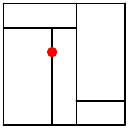}}+ 
\raisebox{-15 pt}{\includegraphics{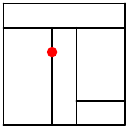}}+ 
\raisebox{-15 pt}{\includegraphics{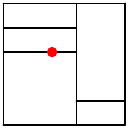}}+ 
\raisebox{-15 pt}{\includegraphics{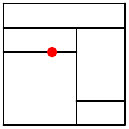}}+
\raisebox{-15 pt}{\includegraphics{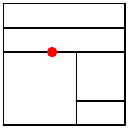}}\,.
\end{array}
$
\caption{A product calculation in the Hopf algebra dRec}
\label{prod fig 2}
\end{figure}

The coproduct on rectangulations is written $\Delta_\dR$ or simply $\Delta$.
Let $R$ be a diagonal rectangulation of size $n$ and consider any path $\gamma$ from the top-left corner of $S$ to the bottom-right corner of $S$, traveling only along edges of rectangles, and traveling only downwards and to the right.
The path $\gamma$ divides the rectangles of $R$ into two groups:  There are $p$ rectangles below/left of the path and~$q$ rectangles above/right of the path, with $p+q=n$.
If $p>0$, then by an easy induction on $p$, we conclude that there are exactly $p-1$ walls of $R$ that intersect the interior of the region in $S$ below/left of $\gamma$.
Similarly, if $q>0$, then there are exactly $q-1$ walls of $R$ that intersect the interior of the region in $S$ above/right of $\gamma$.

We associate to the path $\gamma$ an element $A_\gamma\otimes B_\gamma$ of dRec$\otimes$dRec.
If $p=0$ then the element $A_\gamma$ is $\emptyset$.
Otherwise, $A_\gamma$ is obtained as follows:  
First, delete from $R$ all parts of walls that lie on or above/right of $\gamma$.
Exactly $p-1$ line segments remain.
The element $A_\gamma$ is the sum over all completions of the collection of $p-1$ line segments to a diagonal rectangulation of size $p$.
The walls of any such completion are obtained by extending, if necessary, the line segments already present, without adding any additional walls.
Similarly, $B_\gamma=\emptyset$ if $q=0$, and otherwise $B_\gamma$ is obtained by deleting from $R$ all part of walls that lie on or below/left of $\gamma$, and then summing over all completions of the resulting collection of $q-1$ line segments to a diagonal rectangulation of size~$q$.
The coproduct of $R$ is the sum, over all paths $\gamma$, of $A_\gamma\otimes B_\gamma$.

\begin{figure}
\begin{tabular}{c|c|c}
\large$\gamma$&\large$A_\gamma$&\large$B_\gamma$\\
\hline&&\\[-5 pt]
\raisebox{-19 pt}{\includegraphics{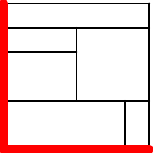}}\,\,
&$\emptyset$
&c.\,\,\raisebox{-19 pt}{\includegraphics{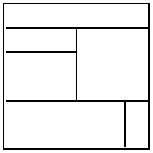}}$\,\,=\,\,$\raisebox{-19 pt}{\includegraphics{coprod_ex_B1}}\\[23 pt]
\hline&&\\[-5 pt]
\raisebox{-19 pt}{\includegraphics{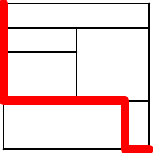}}\,\,
&c.\,\,\raisebox{-19 pt}{\includegraphics{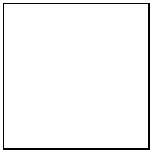}}$\,\,=\,\,$\raisebox{-1 pt}{\includegraphics{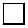}}
&\,c.\,\,\raisebox{-19 pt}{\includegraphics{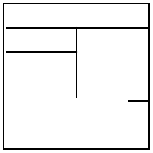}}$\,\,=\,\,$\raisebox{-13 pt}{\includegraphics{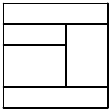}}$\,\,+\,\,$\raisebox{-13 pt}{\includegraphics{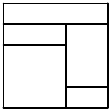}}\\[23 pt]
\hline&&\\[-5 pt]
\raisebox{-19 pt}{\includegraphics{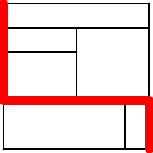}}\,\,
&c.\,\,\raisebox{-19 pt}{\includegraphics{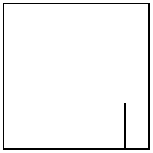}}$\,\,=\,\,$\raisebox{-3.5 pt}{\includegraphics{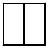}}
&c.\,\,\raisebox{-19 pt}{\includegraphics{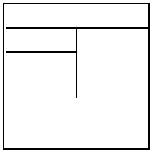}}$\,\,=\,\,$\raisebox{-9 pt}{\includegraphics{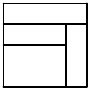}}\\[23 pt]
\hline&&\\[-5 pt]
\raisebox{-19 pt}{\includegraphics{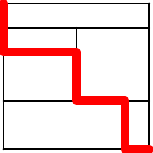}}\,\,
&c.\,\,\raisebox{-19 pt}{\includegraphics{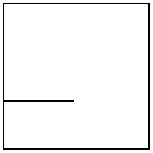}}$\,\,=\,\,$\raisebox{-3.5 pt}{\includegraphics{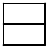}}
&c.\,\,\raisebox{-19 pt}{\includegraphics{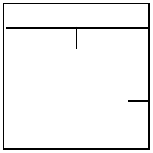}}$\,\,=\,\,$\raisebox{-9 pt}{\includegraphics{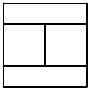}}$\,\,+\,\,$\raisebox{-9 pt}{\includegraphics{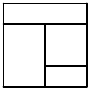}}\\[23 pt]
\hline&&\\[-5 pt]
\raisebox{-19 pt}{\includegraphics{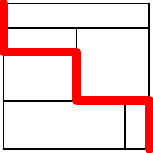}}\,\,
&c.\,\,\raisebox{-19 pt}{\includegraphics{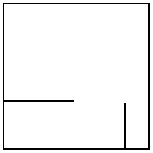}}$\,\,=\,\,$\raisebox{-6.5 pt}{\includegraphics{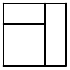}}$\,\,+\,\,$\raisebox{-6.5 pt}{\includegraphics{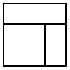}}
&c.\,\,\raisebox{-19 pt}{\includegraphics{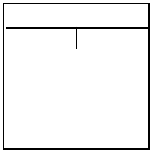}}$\,\,=\,\,$\raisebox{-6.5 pt}{\includegraphics{coprod_ex_RHS_3_2}}\\[23 pt]
\hline&&\\[-5 pt]
\raisebox{-19 pt}{\includegraphics{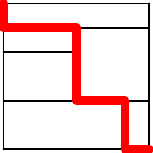}}\,\,
&c.\,\,\raisebox{-19 pt}{\includegraphics{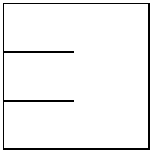}}$\,\,=\,\,$\raisebox{-6.5 pt}{\includegraphics{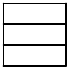}}
&c.\,\,\raisebox{-19 pt}{\includegraphics{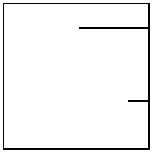}}$\,\,=\,\,$\raisebox{-6.5 pt}{\includegraphics{coprod_ex_RHS_3_3}}\\[23 pt]
\hline&&\\[-5 pt]
\raisebox{-35 pt}[0 pt][0 pt]{\includegraphics{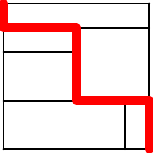}}\,\,
&c.\,\,\raisebox{-19 pt}{\includegraphics{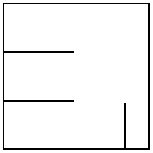}}\qquad\qquad\qquad\qquad&\\[25 pt]
&\quad$\,\,=\,\,$\raisebox{-9 pt}{\includegraphics{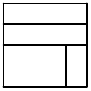}}$\,\,+\,\,$\raisebox{-9 pt}{\includegraphics{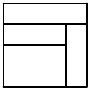}}$\,\,+\,\,$\raisebox{-9 pt}{\includegraphics{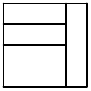}}
&\raisebox{28 pt}[0 pt][0 pt]{c.\,\,\raisebox{-19 pt}{\includegraphics{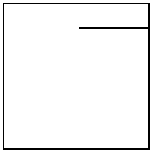}}$\,\,=\,\,$\raisebox{-3.5 pt}{\includegraphics{coprod_ex_RHS_2_2}}}\\[12 pt]
\hline&&\\[-5 pt]
\raisebox{-19 pt}{\includegraphics{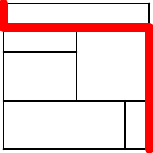}}\,\,
&c.\,\,\raisebox{-19 pt}{\includegraphics{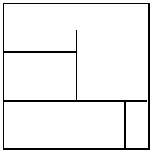}}$\,\,=\,\,$\raisebox{-13 pt}{\includegraphics{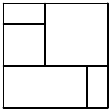}}
&c.\,\,\raisebox{-19 pt}{\includegraphics{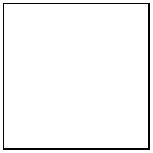}}$\,\,=\,\,$\raisebox{-1 pt}{\includegraphics{coprod_ex_RHS_1}}\\[23 pt]
\hline&&\\[-5 pt]
\raisebox{-19 pt}{\includegraphics{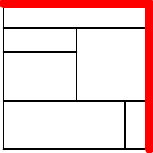}}\,\,
&c.\,\,\raisebox{-19 pt}{\includegraphics{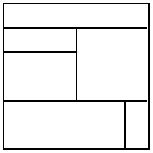}}$\,\,=\,\,$\raisebox{-19 pt}{\includegraphics{coprod_ex_B1}}
&$\emptyset$
\end{tabular}
\caption{A coproduct calculation in the Hopf algebra dRec}
\label{coprod fig}
\end{figure}
Consider, for example, the coproduct of 
\[\raisebox{-19 pt}{\includegraphics{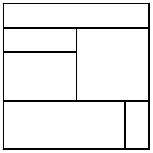}}\,\,.\]
Figure~\ref{coprod fig} shows each path $\gamma$ and the associated $A_\gamma$ and $B_\gamma$.
The figure uses the abbreviation ``c.''\ to mean ``the sum over all completions of appropriate size.''  

The example in Figure~\ref{coprod fig} illustrates an important property of the coproduct $\Delta_\dR$, which we now prove in general.
\begin{prop}\label{rec coeff 1}
Let $R$ be a diagonal rectangulation.
Then each term occurring with nonzero coefficient in $\Delta_\dR(R)$ occurs with coefficient 1.
\end{prop}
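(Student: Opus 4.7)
The plan is to reduce to showing that the splitting path $\gamma$ is uniquely determined by the triple $(R,R_1,R_2)$. For a fixed path $\gamma$, distinct completions of the segments below-left (resp.\ above-right) of $\gamma$ are distinct diagonal rectangulations, so $R_1\otimes R_2$ appears in $A_\gamma\otimes B_\gamma$ with coefficient $0$ or $1$. Summing over paths, the coefficient of $R_1\otimes R_2$ in $\Delta_\dR(R)$ equals the number of paths $\gamma$ for which $R_1$ is a completion of the below-left segments and $R_2$ is a completion of the above-right segments. It therefore suffices to show this number is at most one.

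I would prove uniqueness of $\gamma$ by showing that $(R,R_1,R_2)$ determines the partition of the rectangles of $R$ into the below-left set $\Omega$ and its complement: once the partition is known, $\gamma$ is recovered as the boundary of the staircase region $\bigcup_{U\in\Omega}\bar U$. The rectangles of $\Omega$ are in natural bijection with those of $R_1$ via the diagonal order, and the wall-adjacency structure of $R$ restricted to $\Omega$ determines (and is determined by) the wall-adjacency structure of $R_1$; the analogous statements hold for $\Omega^c$ and $R_2$.

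I would then induct on $n$. Consider the first rectangle $U_1$ of $R$ along the diagonal. Every path $\gamma$ begins at the top-left corner of $U_1$, and its first edge either proceeds rightward along the top of $U_1$---in which case $U_1\in\Omega$ and $U_1$ corresponds to the first rectangle of $R_1$---or it proceeds downward along the left of $U_1$, in which case $U_1\in\Omega^c$ and corresponds to the first rectangle of $R_2$. Exactly one of these two alternatives is consistent with $(R_1,R_2)$, because the orientation and placement of the wall of $R$ adjacent to $U_1$ matches the first-wall data of exactly one of $R_1$ or $R_2$. Removing $U_1$ from $R$ (absorbing it into its neighbor across the appropriate wall), and removing the corresponding first rectangle from $R_1$ or from $R_2$, produces a smaller triple whose coproduct contains the reduced pair via a path of length one less; the inductive hypothesis then gives uniqueness of $\gamma$.

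The main obstacle is the first-step determination: making precise the matching between the wall of $R$ adjacent to $U_1$ and the first-rectangle data of $R_1$ or $R_2$, and verifying that exactly one of the two cases is consistent with the data. A secondary technical point is defining the removal operation on $R$ so that it yields a valid diagonal rectangulation whose coproduct inherits the reduced triple in the form required for the induction.
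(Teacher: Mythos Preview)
Your reduction is correct and matches the paper's: both reduce to showing that distinct paths $\gamma$ cannot yield the same term $R_1\otimes R_2$. But the paper's execution is quite different from yours and avoids the two obstacles you flag. Rather than induct on $n$, the paper argues directly: given distinct paths $\gamma_1,\gamma_2$, locate the first point $x$ where they diverge (say $\gamma_1$ continues downward and $\gamma_2$ rightward), follow $\gamma_1$ down to its first rightward turn at a point $y$, and compare the walls of the two completions that have an endpoint on a specific edge of $R$ through $y$. A short case analysis (according to whether $y$ lies in the interior of a vertical edge of a rectangle of $R$, in the interior of a horizontal edge, or on the boundary of $S$) shows that the completion coming from $\gamma_2$ always has strictly more such walls than the one coming from $\gamma_1$, so the two completions are distinct diagonal rectangulations. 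This in fact proves the stronger statement that $A_{\gamma_1}$ and $A_{\gamma_2}$ have disjoint supports; no induction and no deletion operation are needed.

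Your acknowledged obstacles are genuine, not merely bookkeeping. For the first-step determination, the assertion that ``the wall of $R$ adjacent to $U_1$ matches the first-wall data of exactly one of $R_1,R_2$'' is not a proof: when $U_1\in\Omega$, the second element of $\Omega$ along the diagonal need not be $U_2$, so the wall of $R_1$ through its first diagonal point need not arise from the wall of $R$ through \emph{its} first diagonal point, and the matching you invoke is not well-posed as stated. For the removal step, absorbing $U_1$ into a neighboring rectangle produces a rectangle (and hence a smaller diagonal rectangulation) only when the separating wall is a full edge of both, which often fails; a correct deletion would have to rethread the remaining walls, and you would then need to verify that it interacts correctly with the completion process defining $A_\gamma$ and $B_\gamma$. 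These gaps are likely repairable, but the paper's divergence-point argument sidesteps them entirely and is considerably shorter.
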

\begin{proof}
By definition, for each fixed path $\gamma$ from the top-left corner of $R$ to the bottom-right corner of $R$, every nonzero coefficient in the sums $A_\gamma$ and $B_\gamma$ is 1.
Suppose $\gamma_1$ and $\gamma_2$ are distinct paths from the top-left corner of $R$ to the bottom-right corner of $R$.
The term $\emptyset\otimes R$ occurs in $\Delta_\dR(R)$ with coefficient 1, so we need not consider the case where $\gamma_1$ or $\gamma_2$ passes through the bottom-left corner of $S$.
Let $R_1$ be a decomposition of $S$ obtained as a completion of the segments below $\gamma_1$ and let $R_2$ be a decomposition of $S$ obtained as a completion of the segments below~$\gamma_2$.
To prove the proposition, it is enough to show that $R_1$ and $R_2$ are distinct diagonal rectangulations.

Starting from the top-left corner of $R$, consider the first point $x$ at which $\gamma_1$ and $\gamma_2$ diverge.
Without loss of generality, $\gamma_1$ continues downward from $x$ and $\gamma_2$ continues to the right from $x$.
At some point $y$ directly below $x$, the path $\gamma_1$ turns to the right.
There are four possible local configurations at $y$:
The point $y$ can be in the interior of the right edge of a rectangle of $R$, in the interior of the left edge of $S$, in the interior of the top edge of a rectangle of $R$, or in the interior of the bottom edge of $S$.  

Suppose $y$ is in the interior of the right edge of a rectangle $U$ of $R$.
The decomposition $R_1$ has a rectangle $U_1$ containing $U$, and the decomposition $R_2$ has a rectangle $U_2$ containing $U$.
Since $R_1$ and $R_2$ are identical below and to the left of $y$, if the decompositions $R_1$ and $R_2$ are the same diagonal rectangulation, then the rectangles $U_1$ and $U_2$ correspond.
For each horizontal wall $W$ of $R$ having its left endpoint contained in the right edge of $U$, strictly lower than the point $y$, there is a horizontal wall of $R_1$ having the same left endpoint (contained in the right edge of $U_1$), and furthermore, these are the only horizontal walls of $R_1$ having left endpoints contained in the right edge of $U_1$.
For each of these horizontal walls $W$ of $R$ with left endpoint in the right edge of $U$, strictly lower than $y$, there is also a horizontal wall of $R_2$ with the same left endpoint (contained in the right edge of $U_2$).
But there is at least one more horizontal wall whose left endpoint is contained in the right edge of $U_2$, namely a horizontal wall of $R_2$ whose left endpoint is $y$.
We conclude that $R_1$ and $R_2$ are not the same diagonal rectangulation.

If $y$ is in the interior of the left edge of $S$, then there is no rectangle of $R$ to the left of $y$, but by considering walls whose left endpoint is in the left edge of $S$, we reach the same conclusion.

Now suppose $y$ is in the interior of the top edge of a rectangle $U$ of $R$, let $U_1$ be the rectangle of $R_1$ containing $U$, and let $U_2$ be the rectangle of $R_2$ containing $U$.
Every vertical wall of $R_1$ whose bottom endpoint is contained in the top edge of $U_1$ corresponds to a vertical wall of $R$, lying to the left of $y$, whose bottom endpoint is contained in the top edge of $U$.
For each of these vertical walls of $R$ there is a vertical wall of $R_2$ whose bottom endpoint is contained in the top edge of $U_2$, but there is at least one additional vertical wall of $R_2$ with bottom endpoint in the top edge of $U_2$:  the wall whose bottom endpoint is $y$.
Thus $R_1$ and $R_2$ are not the same diagonal rectangulation.

If $y$ is in the interior of the bottom edge of $S$, then there is no rectangle of $R$ below $y$, but arguing similarly, we reach the same conclusion.
\end{proof}

\section{The isomorphism from tBax to dRec}\label{isom sec}
In this section, we give a bijection between twisted Baxter permutations and diagonal rectangulations and show that the product and coproduct on diagonal rectangulations, defined in Section~\ref{Rec operations sec}, coincide, via the bijection, with the product and coproduct on tBax.
In particular, dRec is a Hopf algebra.
We then consider some natural automorphisms and antiautomorphisms of the Hopf algebra dRec.

\subsection{Bijection}\label{bij sec}
The bijection from twisted Baxter permutations to diagonal rectangulations is the restriction of a surjection $\rho$ from $S_n$ to $\dRec_n$.
Given $x\in S_n$, the diagonal rectangulation $\rho(x)$ is obtained by ``concatenating'' the bottom planar binary tree $\rho_b(x)$ with the top planar binary tree $\rho_t(x)$ as we now make precise:
Construct both trees with their leaves evenly spaced at intervals of $\sqrt{2}$, with all lines drawn straight and all angles right.
Rotate both trees clockwise by an eighth-turn, place the root of $\rho_b(x)$ at the origin and place the root of $\rho_t(x)$ at the point $(n,n)$.
Inspection of the definitions of $\rho_b$ and $\rho_t$ leads immediately to the conclusion that the union of the edges of the two rotated and translated trees defines a decomposition of the square into $n$ rectangles, each of whose interiors intersects the diagonal.
By Proposition~\ref{alt char}, this is a diagonal rectangulation of size $n$.
Inspection of these definitions also leads immediately to the following direct description of the map $\rho$.

Let $x\in S_n$.
Start with $n-1$ diagonal points in the square $S$ and number the spaces between them $1,2,\ldots,n$ from top-left to bottom-right.
For convenience in the description of $\rho$, we also include the top-left and bottom-right corners of $S$ as ``diagonal points.''
We read $x_1\cdots x_n$ from left to right and draw a rectangle for each entry.
After $i-1$ steps in the construction, let $T_{i-1}$ be the union of the left and bottom edges of $S$ with the union of the rectangles drawn in the first $i-1$ steps.
Once the construction is described, it is apparent by induction that each $T_{i-1}$ is left- and bottom-justified.
We draw an additional rectangle whose top-left and bottom-right corners are described as follows:

Consider the label $x_i$ on the diagonal.
If the diagonal point $p$ immediately above/left of the label $x_i$ is not in $T_{i-1}$, then the top-left corner of the new rectangle is the rightmost point of $T_{i-1}$ that is directly left of $p$.
If $p$ is in $T_{i-1}$ (necessarily on the boundary of $T_{i-1}$, then the top-left corner of the new rectangle is the highest point of $T_{i-1}$ directly above $p$.
Similarly, if the diagonal point $p'$ immediately below/right of the label $x_i$ is not in $T_{i-1}$, then the bottom-right corner of the new rectangle is the highest point of $T_{i-1}$ that is directly below $p$.
If $p'$ is in $T_{i-1}$ then the bottom-right corner of the new rectangle is the rightmost point of $T_{i-1}$ that is directly to the right of $p'$.
Figure~\ref{rho fig} illustrates the steps in the construction of $\rho(x)$ when $x=467198352$.  
In each step, the new rectangle is shown in red (the darkest gray when not viewed in color), and the set $T_{i-1}$ is shaded in medium gray.
The part of $S$ not yet covered by rectangles is shaded in light gray.

\begin{figure}[t]
\begin{tabular}{ccccccc}
\scalebox{.95}{\includegraphics{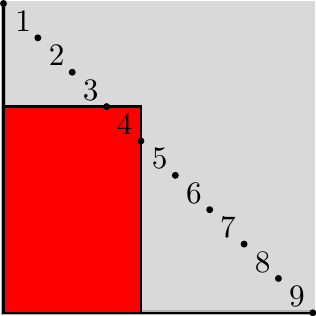}}&&&
\scalebox{.95}{\includegraphics{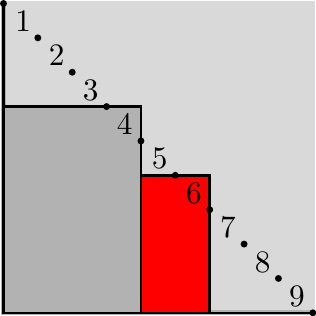}}&&&
\scalebox{.95}{\includegraphics{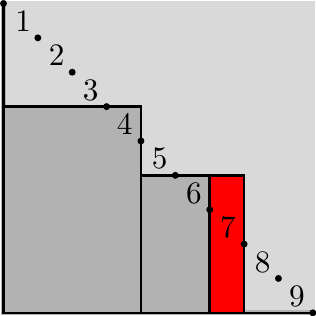}}\\[10 pt]
\scalebox{.95}{\includegraphics{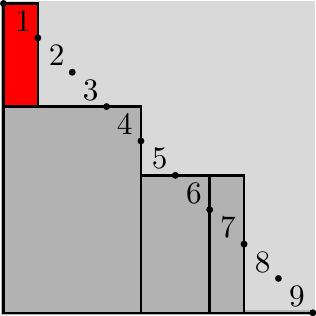}}&&&
\scalebox{.95}{\includegraphics{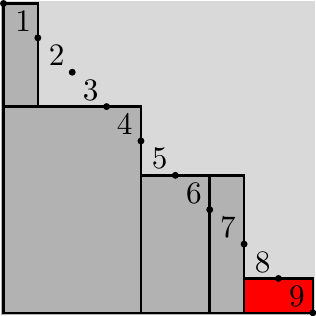}}&&&
\scalebox{.95}{\includegraphics{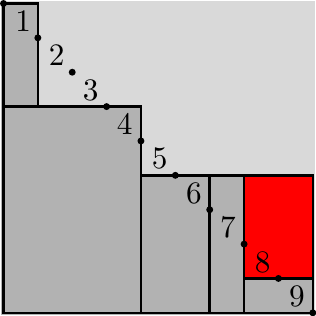}}\\[10 pt]
\scalebox{.95}{\includegraphics{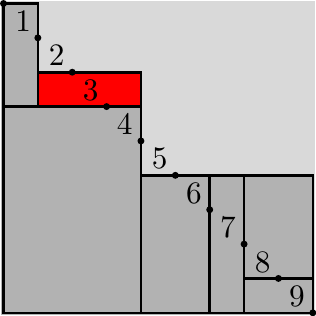}}&&&
\scalebox{.95}{\includegraphics{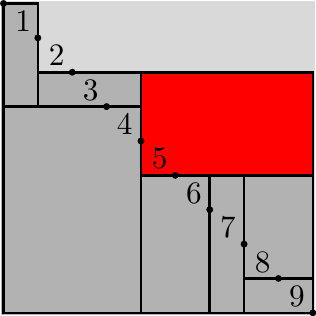}}&&&
\scalebox{.95}{\includegraphics{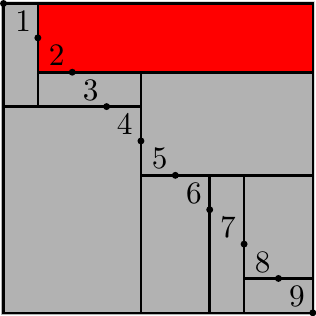}}
\end{tabular}
\caption{Steps in the construction of $\rho(467198352)$}
\label{rho fig}
\end{figure}

Our next goal is to prove the following theorem:

\begin{theorem}\label{rho bij}
The map $\rho$ restricts to a bijection between twisted Baxter permutations in $S_n$ and diagonal rectangulations of size $n$.
\end{theorem}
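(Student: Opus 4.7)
The plan is to characterize the fibers of $\rho$ as the $\Theta_\tB$-congruence classes on $S_n$, combine this with Proposition~\ref{tBax props}(\ref{tBax bottoms}) to identify the twisted Baxter permutations as the unique minimum representatives of these classes, and then verify surjectivity of $\rho$ onto $\dRec_n$ by explicit construction.

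First I would show that $\rho(x)=\rho(y)$ if and only if $\rho_b(x)=\rho_b(y)$ and $\rho_t(x)=\rho_t(y)$. The reverse implication is immediate from the construction of $\rho$ as the concatenation of the two rotated trees. For the forward implication, the rotated image of $\rho_b(x)$ lies in the closed lower-left triangle cut out by the diagonal of $S$, while the rotated image of $\rho_t(x)$ lies in the closed upper-right triangle; intersecting the walls of $\rho(x)$ with each triangle recovers the corresponding tree.

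Combining this with Propositions~\ref{rho b cong}, \ref{rho t cong}, and~\ref{10.2}, the fibers of $\rho$ coincide with the $\Theta_\tB$-classes. Since each such class is an interval in the weak order and hence has a unique minimum, and since by Proposition~\ref{tBax props}(\ref{tBax bottoms}) these minima are precisely the twisted Baxter permutations, the restriction of $\rho$ to $\tBax_n$ is injective with image equal to $\rho(S_n)$.

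It then remains to verify that $\rho(S_n)=\dRec_n$. Given $R\in\dRec_n$, I would build a preimage iteratively by running the $\rho$-process in reverse: at each stage pick a rectangle of $R$ whose inclusion properly extends the growing left-and-bottom-justified region $T_{i-1}$, and record its diagonal label as the next entry of $x$. The main obstacle will be precisely this surjectivity step, namely a geometric lemma asserting that at every stage such an admissible rectangle exists and that the resulting $x$ indeed satisfies $\rho(x)=R$; this should follow from a local analysis of the ``staircase'' boundary between the partially constructed region and its complement inside a diagonal rectangulation.
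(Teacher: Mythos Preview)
Your proposal is correct and follows essentially the same route as the paper: the paper proves the theorem via two propositions, one showing that the fibers of $\rho$ are the $\Theta_\tB$-classes (by exactly your argument combining Propositions~\ref{rho b cong}, \ref{rho t cong}, and~\ref{10.2} with the observation that $\rho(x)=\rho(y)$ iff $\rho_b(x)=\rho_b(y)$ and $\rho_t(x)=\rho_t(y)$), and one establishing surjectivity by the iterative construction you describe, with the existence of an admissible rectangle at each stage verified by a staircase argument along the boundary of $T_{i-1}$. Your identification of the surjectivity step as the main technical obstacle is accurate.
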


Theorem~\ref{rho bij} follows from two propositions.

\begin{prop}\label{rho surj}
The map $\rho:S_n\to\dRec_n$ is surjective.
\end{prop}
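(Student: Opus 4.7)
The plan is to construct, for each $R \in \dRec_n$, an explicit permutation $x \in S_n$ with $\rho(x) = R$ by listing the rectangles of $R$ in an order compatible with the drawing order implicit in the $\rho$-procedure. I would proceed greedily: let $U_1$ be the (unique) rectangle of $R$ containing the bottom-left corner of $S$, and at each subsequent stage pick any rectangle $U_{i+1}$ of $R$ not yet chosen whose left and bottom sides lie entirely in $T_i := U_1 \cup \cdots \cup U_i \cup (\text{left edge of } S) \cup (\text{bottom edge of } S)$. The existence of such a pickable rectangle when $T_i \neq S$ follows by taking the unvisited rectangle whose bottom-left corner is smallest in lexicographic order: any rectangle of $R$ touching its left side at a point $(c,y)$ with $c>0$ has bottom-left corner with $x$-coordinate strictly less than $c$, so by minimality must be one of the already-visited $U_1,\ldots,U_i$, putting $(c,y)$ in $T_i$; the same argument works along the bottom side.

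Given the enumeration $U_1, \ldots, U_n$, define $x \in S_n$ by letting $x_i$ be the diagonal label of $U_i$, namely the unique $k$ such that the $k$-th diagonal space of $S$ lies in $U_i$. I would then prove $\rho(x) = R$ by induction on $i$, showing simultaneously that at the end of step $i$ of the $\rho$-procedure on $x$ the procedure's own region $T_i$ agrees with the greedy $T_i$ above, and that the rectangle drawn at step $i+1$ is exactly $U_{i+1}$. For the base step, when $x_1 = j$ with $1 < j < n$, the two diagonal points flanking the $j$-th space lie outside $T_0$, so the $\rho$-procedure forces the first rectangle to have top-left corner $(0, n-j+1)$ and bottom-right corner $(j,0)$; Proposition~\ref{alt char} then says that $U_1$ must be $[0,j] \times [0, n-j+1]$ with diagonal label $j$, matching the computation. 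The boundary cases $j=1$ and $j=n$ are handled analogously using the convention that the top-left and bottom-right corners of $S$ count as diagonal points.

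The inductive step is the same calculation carried out on the staircase $T_i$ rather than on $T_0$. Because $U_{i+1}$'s left and bottom sides already lie in $T_i$, its top-left and bottom-right corners sit on the boundary of $T_i$ in specific positions. One verifies that the $\rho$-procedure's corner rules select exactly these two corners by checking the four cases based on whether each of the two diagonal points flanking the $x_{i+1}$-th space lies in $T_i$ or not: when the flanking diagonal point is not in $T_i$ the ``rightmost point directly left of $p$'' or ``highest point directly below $p'$'' rule applies, and when it is in $T_i$ the ``highest point directly above $p$'' or ``rightmost point directly right of $p'$'' rule applies, and in each subcase the prescribed point coincides with the relevant corner of $U_{i+1}$. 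This gives $T_{i+1} = T_i \cup U_{i+1}$ and completes the induction; at $i = n$ we have $T_n = S$ and $\rho(x) = R$.

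The main obstacle is organizing this inductive case analysis cleanly, since the four cases each require identifying the relevant segment of $T_i$'s boundary and matching it against an edge of $U_{i+1}$. These checks are routine geometry once one carefully tracks which rectangles of $T_i$ are adjacent to $U_{i+1}$ along its left and bottom sides, but they need to be written uniformly so the inductive step remains self-contained. A lesser technical point is making the lexicographic minimality argument for pickability fully rigorous at a corner where several rectangles meet, which requires checking that the ``adjacent rectangle on the west'' at each height $y$ along the left side of $V$ is indeed unique and shares a wall with $V$ there.
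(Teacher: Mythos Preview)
Your overall strategy matches the paper's: greedily list the rectangles of $R$ so that each has its left and bottom sides contained in the union $T_i$ of the earlier rectangles with the left and bottom edges of $S$, then verify inductively that the $\rho$-procedure reproduces these rectangles in order. Your verification step (the four-case check that $\rho$'s corner rules pick out the corners of $U_{i+1}$) is essentially the paper's argument; one small point is that the case $p \in T_i$ needs Proposition~\ref{alt char} and not merely adjacency bookkeeping, since you must rule out the top-left corner of $U_{i+1}$ sitting strictly below the highest point of $T_i$ directly above $p$.

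There is, however, a genuine gap in your existence argument. You propose to take the unvisited rectangle $V$ whose bottom-left corner $(c, d)$ is lexicographically smallest, and you correctly argue that any rectangle touching $V$'s left side from the west has bottom-left $x$-coordinate strictly less than $c$, hence is already visited. But ``the same argument works along the bottom side'' is false: a rectangle $W$ touching $V$'s bottom side from below has bottom-left $y$-coordinate strictly less than $d$, yet its bottom-left $x$-coordinate can exceed $c$, so $W$ may be lexicographically larger than $V$ and still unvisited. Concretely, for $n = 5$ take the diagonal rectangulation with rectangles $[0,4]\times[0,2]$, $[4,5]\times[0,2]$, $[0,1]\times[2,5]$, $[1,5]\times[2,3]$, and $[1,5]\times[3,5]$. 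Your procedure visits $[0,4]\times[0,2]$ (corner $(0,0)$), then $[0,1]\times[2,5]$ (corner $(0,2)$), and next selects $V = [1,5]\times[2,3]$ (corner $(1,2)$); but the segment $[4,5]\times\{2\}$ of $V$'s bottom side lies atop the still-unvisited $[4,5]\times[0,2]$ (corner $(4,0)$), so $V$ is not pickable. Switching to the other lexicographic order merely moves the problem to the left side.

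The paper handles existence by a different device: the staircase boundary of $T_{i-1}$ has inward corners where it turns from going down to going right, and the rectangles $U_1, \ldots, U_k$ at those corners (listed top-left to bottom-right) satisfy three simple observations: the left edge of $U_1$ is in $T_{i-1}$, the bottom edge of $U_k$ is in $T_{i-1}$, and whenever the bottom edge of $U_j$ fails to lie in $T_{i-1}$ the left edge of $U_{j+1}$ does. Scanning left to right therefore locates some $U_j$ with both edges in $T_{i-1}$. Replacing your lexicographic choice with this scan repairs the argument, after which the rest of your outline goes through.
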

\begin{proof}
Let $R$ be a diagonal rectangulation.
We construct a permutation $x\in S_n$ with $\rho(x)=R$.
By Proposition~\ref{alt char}, $R$ is a decomposition of the square into $n$ rectangles, each of whose interiors intersects the diagonal.
Label the rectangles of $R$ as $1,2,\ldots,n$ from top-left to bottom-right along the diagonal.

For $i\in[n]$, assume that the entries $x_1x_2\cdots x_{i-1}$ have been determined, and that steps 1 through $i-1$ in the description of $\rho$, applied to $x_1x_2\cdots x_{i-1}$ give exactly the rectangles of $R$ labeled by $\set{x_1x_2\cdots x_{i-1}}$.
We try to choose a rectangle whose label can be taken as $x_i$.
Let $T$ be the set $T_{i-1}$ in the description of the $i\th$ step of $\rho$.
That is, $T$ is the union of the left and bottom edges of $S$ with the rectangles of $R$ labeled by $\set{x_1x_2\cdots x_{i-1}}$.

The analogous set $T_i$ must in particular be left- and bottom-justified.
Thus the rectangle we choose in step $i$ must have both its entire left edge and its entire bottom edge contained in $T$.
We claim that this condition on the rectangle is also sufficient.
In other words, we claim that, as long as we take $x_i$ to label a rectangle $U$ whose left and bottom edges are in $T$, the $i\th$ step of $\rho$ reproduces $U$.
To see this, we need only verify that the top-left and bottom-right corners of $U$ are as described in the definition of $\rho$.

The boundary of $U$ intersects the diagonal in exactly two points.  
Let $p$ be the top-left of the two, and let $p'$ be the bottom-right of the two.
If $p\in T$ then necessarily $p$ is in the left edge of $U$.
If the top corner of $U$ is somewhere below the highest point of $T$ directly above $p$, then the top-left corner of $U$ is the bottom-left corner of some rectangle $U'$ of $R$, but then the interior of $U'$ does not intersect the diagonal, which would be a contradiction.
If $p\not\in T$, then since the left edge of $U$ is contained in $T$, the point $p$ is on the top edge of $U$.
Since the left edge of $U$ is in $T$, the top-left corner of $U$ is in $T$, and is necessarily the rightmost point of $T$ that is directly left of $p$.
In either case, we see that the top-left corner of $U$ is the point described in the direct description of $\rho$.
The symmetric argument proves the analogous fact about the bottom-right corner of $U$, and the claim is proved.

In light of the claim, it remains only to argue that there exists some rectangle $U$ of $R$ which is not contained in $T$ but whose bottom and left edges are contained in $T$.
The boundary of $T$ includes a polygonal path from the top-left corner of $S$ to the bottom-right corner of $S$, always moving directly right or directly down.
At each corner where the path turns from moving down to moving right, there is a rectangle of $R$ which is not contained in $T$.
Number these rectangles $U_1,\ldots,U_k$ from top-left to bottom-right.
We show that there is a $j\in[k]$ such that the left and bottom edges of $U_j$ are contained in $T$.
The following three simple observations are illustrated in Figure~\ref{edge fig}.  
\begin{figure}[t]
\psfrag{1}[cc][cc]{\large$U_1$}
\psfrag{j}[cc][cc]{\large$U_j$}
\psfrag{j1}[lc][lc]{\large$U_{j+1}$}
\psfrag{k}[cc][cc]{\large$U_k$}
\psfrag{T}[cc][cc]{\large$T$}
\centerline{\scalebox{.8}{\includegraphics{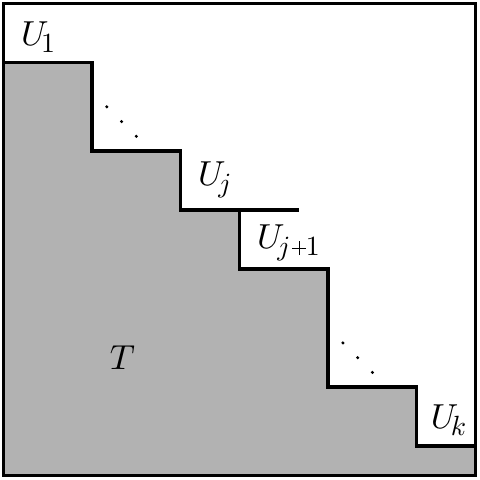}}}
\caption{An illustration for the proof of Proposition~\ref{rho surj}}
\label{edge fig}
\end{figure}
First, the left edge of $U_1$ is contained in $T$.
Second, for every $j\in[k-1]$, if the bottom edge of $U_j$ is not contained in $T$, then the left edge of $U_{j+1}$ is contained in $T$.
Third, the bottom edge of $U_k$ is contained in $T$.
Thus, considering the rectangles in order $U_1,\ldots,U_k$, we eventually find a rectangle $U_j$ whose left and bottom edges are contained in $T$.
\end{proof}

\begin{prop}\label{rho classes}
The fibers of the map $\rho:S_n\to\dRec_n$ are the congruence classes of $\Theta_\tB$.
\end{prop}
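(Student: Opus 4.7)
The plan is to exploit the direct geometric description of $\rho$ as a concatenation of $\rho_b$ and $\rho_t$, together with Proposition~\ref{10.2}, which identifies $\Theta_\tB$ as the meet $\Theta_{231}\meet\Theta_{312}$ in the congruence lattice of the weak order.

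The key intermediate step I would prove is a \emph{separation} statement: for all $x,y\in S_n$, we have $\rho(x)=\rho(y)$ if and only if $\rho_b(x)=\rho_b(y)$ and $\rho_t(x)=\rho_t(y)$. The ``if'' direction is immediate from the construction described at the beginning of Section~\ref{bij sec}, because $\rho(x)$ is literally built from $\rho_b(x)$ (rotated, with root at the origin) and $\rho_t(x)$ (rotated, with root at $(n,n)$). For the ``only if'' direction, I would show that the two trees can be read back off of $\rho(x)$: intersect each wall of $\rho(x)$ with the closed region weakly below/left of the diagonal to recover $\rho_b(x)$, and with the closed region weakly above/right of the diagonal to recover $\rho_t(x)$. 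In the construction, the edges of $\rho_b$ sit on the below/left side of the diagonal and the edges of $\rho_t$ sit on the above/right side, with the two trees meeting only at the diagonal points, so this splitting is canonical.

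Granting the separation statement, the fibers of $\rho$ are exactly the blocks of the common refinement of the fibers of $\rho_b$ and the fibers of $\rho_t$. By Propositions~\ref{rho b cong} and~\ref{rho t cong}, those fibers are the $\Theta_{231}$-classes and the $\Theta_{312}$-classes respectively. The common refinement of two partitions is, by definition, their meet in the partition lattice, which (as noted in Section~\ref{NSym sec}) coincides with their meet in the congruence lattice of the weak order. By Proposition~\ref{10.2}, this meet equals $\Theta_\tB$, completing the proof.

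The main obstacle is the separation statement, and specifically the geometric verification that the two trees embed into $\rho(x)$ without interfering. I would need to rule out the possibility that a wall of $\rho_b(x)$ and a collinear wall of $\rho_t(x)$ fuse into a single maximal wall of $\rho(x)$ in a way that obstructs the recovery of the individual trees, and to check that no edge of either tree straddles the diagonal other than at an endpoint in $X$. Once these geometric facts are pinned down from the direct description of $\rho$, the remainder of the argument is a formal consequence of results already established in the paper.
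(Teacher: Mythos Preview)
Your approach is essentially identical to the paper's: both argue that $\rho(x)=\rho(y)$ if and only if $\rho_b(x)=\rho_b(y)$ and $\rho_t(x)=\rho_t(y)$, invoke Propositions~\ref{rho b cong} and~\ref{rho t cong} to identify those fibers with $\Theta_{231}$ and $\Theta_{312}$, and then apply Proposition~\ref{10.2} to conclude that the common refinement is $\Theta_\tB$. The paper simply asserts the separation statement as evident from the construction, whereas you spell out the geometric recovery of the two trees more carefully; otherwise the arguments coincide.
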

\begin{proof}
Proposition~\ref{rho b cong} established that the fibers of $\rho_b$ are the congruence classes of a congruence $\Theta_{231}$.
Similarly, by Proposition~\ref{rho t cong}, the fibers of $\rho_t$ form a congruence $\Theta_{312}$.
Two permutations $x,y\in S_n$ have $\rho(x)=\rho(y)$ if and only if both $\rho_b(x)=\rho_b(y)$ and $\rho_t(x)=\rho_t(y)$.
Thus the fibers of $\rho$ constitute the meet, in the partition lattice, of $\Theta_{231}$ and $\Theta_{312}$.
Proposition~\ref{10.2} identifies this meet as $\Theta_\tB$.
\end{proof}

Now, each $\Theta_\tB$-class has a unique minimal representative, and the set of these minimal representatives is the set of twisted Baxter permutations by Proposition~\ref{tBax props}.\ref{tBax bottoms}.
This completes the proof of Theorem~\ref{rho bij}.

The proof of Theorem~\ref{rho bij} leads easily to an explicit description of the inverse of the restriction of $\rho$.
Let $R$ be a diagonal rectangulation.
The proof of Proposition~\ref{rho surj} constructs a permutation $x$, one entry at a time, such that $\rho(x)=R$.
At each step, there were choices.
Let $\tau(R)$ be the permutation obtained by always choosing the smallest possible entry at each step.
In other words, choose the top/leftmost allowable rectangle from the list $U_1,\ldots,U_k$.

\begin{prop}\label{rho inv}
For any diagonal rectangulation $R$, the permutation $\tau(R)$ is a twisted Baxter permutation.
Thus, the map $\tau$ is the inverse of the restriction of $\rho$ to twisted Baxter permutations.
\end{prop}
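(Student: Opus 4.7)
The plan is to identify $\tau(R)$ as the weak-order minimum of the fiber $\rho^{-1}(R)$. By Proposition~\ref{rho classes}, this fiber is a $\Theta_\tB$-class, and by the general fact that congruence classes of a finite lattice are intervals, it is an interval in the weak order. Its unique minimum is, by Proposition~\ref{tBax props}.\ref{tBax bottoms}, the unique twisted Baxter permutation in the fiber. Therefore, once I show that the greedy permutation $\tau(R)$ coincides with this minimum, both assertions of the proposition follow: $\tau(R)$ is twisted Baxter, and since $\rho$ is bijective on twisted Baxter permutations (Theorem~\ref{rho bij}), $\tau$ must be the inverse of that restriction. Note that $\tau(R)\in\rho^{-1}(R)$ is immediate from the construction, because the proof of Proposition~\ref{rho surj} shows that every valid sequence of greedy choices reproduces $R$ under $\rho$.

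To show $\tau(R)$ equals the weak-order minimum $z$ of $\rho^{-1}(R)$, I would argue by induction on position: assume $z_1\cdots z_{i-1} = \tau(R)_1\cdots\tau(R)_{i-1}$ and deduce $z_i = \tau(R)_i$. Since the first $i-1$ entries agree, the same set of rectangles has been placed after $i-1$ steps in either construction, so the set $A$ of allowable rectangles at step $i$ is the same. By the greedy definition of $\tau$, $\tau(R)_i = \min A$. Suppose for contradiction that $z_i = m'$ with $m' > m := \min A$. A crucial observation is that once a rectangle's left and bottom edges lie in the current $T$, it remains allowable in every subsequent step, since $T$ only grows. Consequently $m$ must appear at some position $j>i$ in $z$, making $(m',m)$ an inversion of $z$. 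In $\tau(R)$, by contrast, $m$ sits at position $i$, while $m'$---which is neither among the matched entries $z_1,\ldots,z_{i-1}$ nor equal to $\tau(R)_i=m$---sits at some later position, so $(m',m)$ is \emph{not} an inversion of $\tau(R)$. Since the weak order is containment of inversion sets and $\tau(R)\in\rho^{-1}(R)$, this contradicts $z\le\tau(R)$. Thus $z_i = m = \tau(R)_i$, and the induction is complete.

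The only substantive step is the inversion-set comparison in the inductive case, and this reduces to the elementary monotonicity of the allowability criterion. No direct verification of the pattern-avoidance conditions on $\tau(R)$ is needed; its status as twisted Baxter is extracted purely from weak-order minimality via Proposition~\ref{tBax props}.\ref{tBax bottoms}, and the second assertion ($\tau = \rho^{-1}$ on twisted Baxter permutations) then falls out of Theorem~\ref{rho bij} with no further work.
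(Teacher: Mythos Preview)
Your proposal is correct and follows essentially the same approach as the paper: both identify $\tau(R)$ as the weak-order minimum of its $\Theta_\tB$-class via the greedy property, then invoke Proposition~\ref{tBax props}.\ref{tBax bottoms}. The only cosmetic difference is that the paper verifies minimality by checking that no lower cover of $\tau(R)$ lies in the fiber (if $x\covered\tau(R)$ with difference at positions $i,i{+}1$, then $x_i<\tau(R)_i$ would contradict greediness), whereas you compare $\tau(R)$ directly with the minimum $z$ via inversion-set containment; both arguments hinge on the same observation that $\tau(R)_i$ is the smallest possible $i$th entry given the agreed-upon prefix.
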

It is a simple and instructive exercise to suppose that the permutation $\tau(R)$ contains the pattern $3$-$41$-$2$ or the pattern $2$-$41$-$3$ and to obtain a contradiction (\emph{i.e.}\ to show that, at some step, we failed to choose the top/leftmost rectangle).
However, this proposition is also an easy consequence of Propositions~\ref{tBax props}.\ref{tBax bottoms} and~\ref{rho classes}.
\begin{proof}
Let $x=x_1\cdots x_n$ be any permutation covered by $\tau(R)=t_1\cdots t_n$ in the weak order, and let $i,i+1$ be the indices where $x$ and $\tau(R)$ differ.
In particular, $x_1\cdots x_{i-1}=t_1\cdots t_{i-1}$ and $x_i<t_i$.
But $t_i$ is, by definition, the minimal entry of any permutation starting with $t_1\cdots t_{i-1}$ and mapping to $R$ under $\rho$.
Thus $\rho(x)\neq R$.
By Proposition~\ref{rho classes}, the fibers of $\rho$ are the $\Theta_\tB$-class, and are in particular intervals.
We conclude that $\tau(R)$ is minimal in its $\Theta_\tB$-class so that $\tau(R)$ is a twisted Baxter permutation by Proposition~\ref{tBax props}.\ref{tBax bottoms}.
\end{proof}

\begin{remark}\label{alt desc}
Reflection in the diagonal permutes the set of diagonal rectangulations.
The map $\rho$ seems not to respect this symmetry, because it builds a diagonal rectangulation starting from the bottom-left corner.
However, we might just as well have built diagonal rectangulations from the top-right corner.
Indeed, if $\rho'$ is the map taking $x\in S_n$ to the diagonal reflection of $\rho(x)$, one easily verifies that $\rho'=\rho\circ\rp$, where $\rp$ is the ``reverse-positions'' map defined in Section~\ref{auto sec}.
Thus the original map $\rho$ has an alternate description, reading $x$ from right to left, and drawing a top-right-justified rectangle for each entry.
\end{remark}

\begin{remark}\label{credit remark}
As mentioned in Section~\ref{lr sec}, various versions of the maps from permutations to trees have appeared in various sources.
The idea of mapping a permutation to a pair of ``twin'' binary trees to get pairs counted by the Baxter number is due to \cite{DGStack}, where the pairs are also counted by a non-intersecting lattice path argument.
The idea of putting the two trees together to form a rectangulation appears to be due to \cite{ABP}.
A map essentially equivalent to $\rho$ is used in~\cite{FFNO} in a bijection between pairs of twin binary trees and Baxter permutations, via diagonal rectangulations.
More is said about the map from~\cite{FFNO} in Remark~\ref{FFNO rem}.
A map related to $\rho$ also appears in \cite[Section~3]{ABP2}. 
\end{remark}

\begin{remark}\label{adjacency poset}
Each diagonal rectangulation $R$ of size $n$ corresponds to a certain partial order on the set $[n]$.
Specifically, first define a partial order on the rectangles of $R$ as follows:
Put $U_1<U_2$ if the bottom or left edge of $U_2$ intersects the top or right edge of $U_1$.
The proof of Proposition~\ref{rho surj} serves to verify that this relation is acyclic, so we obtain a partial order on the rectangles of $R$ by taking the reflexive and transitive closure.
Since the rectangles of $R$ are labeled by $[n]$, this is a partial order on $[n]$ which we call the \emph{adjacency poset} of $R$.
The proof of Proposition~\ref{rho surj} also shows that the fiber $\rho^{-1}(R)$ is the set of all linear extensions of the adjacency poset of $R$.
It would be interesting to give an intrinsic characterization of the posets that arise as adjacency posets of diagonal rectangulations.
In particular, the Hopf algebra dRec (or the isomorphic Hopf algebra tBax) could then be given an alternate combinatorial realization on the graded vector space with graded basis indexed by adjacency posets.
\end{remark}

\subsection{Isomorphism}\label{isom subsec}
In this section, we prove the following theorem:
\begin{theorem}\label{tBax dRec isom}
The triple $\dRec=(\K[\dRec_\infty],\bullet_\dR,\Delta_\dR)$ is a graded Hopf algebra.
The map $\rho$ is an isomorphism of Hopf algebras from tBax to dRec. 
\end{theorem}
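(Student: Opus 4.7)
My plan is to combine the bijection $\rho: \tBax_n \to \dRec_n$ from Theorem~\ref{rho bij} with the fact, established via the $\H$-family construction, that tBax is already a graded Hopf algebra: if we can show that $\rho$ intertwines the products and the coproducts, then dRec inherits its graded Hopf algebra structure from tBax via $\rho$, and $\rho$ is automatically a Hopf algebra isomorphism.

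\emph{Product.} For $x \in \tBax_p$ and $y \in \tBax_q$ with $p + q = n$, both $\rho(x \bullet_\tB y) = \sum_z \rho(z)$ (sum over twisted Baxter shifted shuffles $z$ of $x$ and $y$) and $\rho(x) \bullet_\dR \rho(y) = \sum_R R$ (sum over $R \in \dRec_n$ with $\TL_p(R) = \rho(x)$ and $\BR_q(R) = \rho(y)$) are multiplicity-free sums, so it suffices to match their index sets. The key geometric lemma is that for \emph{any} shifted shuffle $z \in S_n$ of permutations $x' \in S_p$ and $y' \in S_q$, we have $\TL_p(\rho(z)) = \rho(x')$ and $\BR_q(\rho(z)) = \rho(y')$. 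This will follow from direct inspection of the construction of $\rho$ in Section~\ref{bij sec}: the rectangles of $\rho(z)$ with labels in $\{1, \ldots, p\}$ are exactly those built from the entries contributed by $x'$, they lie in the top-left principal subsquare, and restriction collapses the intrusions of large-value rectangles in precisely the way needed to recover $\rho(x')$, and symmetrically for $\{p+1, \ldots, n\}$. The forward inclusion of index sets is then immediate, and the reverse inclusion follows by applying Proposition~\ref{tBax restrict} to $\tau(R)$ together with the same lemma.

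\emph{Coproduct.} Propositions~\ref{perm coeff 1} and~\ref{rec coeff 1} again reduce the identity $(\rho \otimes \rho)\circ\Delta_\tB = \Delta_\dR\circ\rho$ to a bijection between the sets of nonzero summands on each side. On the tBax side the summands have the form $\rho(\std(w_1,\ldots,w_i)) \otimes \rho(\std(w_{i+1},\ldots,w_n))$ for $w$ in the $\Theta_\tB$-class of $z$ and $i$ such that both standardizations are twisted Baxter; to such a pair $(w, i)$ we associate the monotone path $\gamma_{w, i}$ formed by the top-right boundary of the region $T_i$ of rectangles drawn in the first $i$ steps of the construction of $\rho(w) = \rho(z)$. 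The content of the bijection is that $\rho(\std(w_1, \ldots, w_i))$ is one of the completions of the partial wall system below $\gamma_{w, i}$ summed in $A_{\gamma_{w, i}}$, that $\rho(\std(w_{i+1}, \ldots, w_n))$ is one of the completions summed in $B_{\gamma_{w, i}}$, and that as $(w, i)$ varies, every monotone path is realized and every pair of completions of its lower and upper partial wall systems arises exactly once.

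\emph{Main obstacle.} The coproduct step is the principal difficulty. The product identity reduces to one relatively local geometric lemma about how $\rho$ interacts with principal subsquare restriction, whereas the coproduct requires matching prefix and suffix standardizations of linear extensions of the $\Theta_\tB$-class of $z$ (equivalently, of the adjacency poset of $\rho(z)$; see Remark~\ref{adjacency poset}) against completions of partial wall systems. The hardest part will be surjectivity onto completions: given a path $\gamma$ and specific completions $R_1, R_2$, one must produce a representative $w$ in the class of $z$ and an index $i$ whose prefix and suffix standardizations yield precisely $R_1$ and $R_2$. The natural candidate is to splice together a linear extension of the adjacency poset of $R_1$ with a shifted linear extension of the adjacency poset of $R_2$, chosen to agree with $R$ near $\gamma$; verifying that such a $w$ lies in the $\Theta_\tB$-class of $z$ and that the correspondence $(w, i) \mapsto (\gamma_{w, i}, R_1, R_2)$ is one-to-one is the delicate step.
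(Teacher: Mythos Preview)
Your proposal is correct and follows essentially the same route as the paper: reduce to showing $\rho$ is an algebra and coalgebra map, use Lemma~\ref{restrict rec} (your ``key geometric lemma'') together with Proposition~\ref{tBax restrict} and $\tau$ for the product, and use Propositions~\ref{perm coeff 1} and~\ref{rec coeff 1} to reduce the coproduct identity to matching sets of terms via the path $\gamma_{w,i}$ and the splicing construction you describe. One small recalibration: in the paper's execution the direction you label ``surjectivity'' (splice linear extensions of $R_1$ and $R_2$ and check $\rho(w)=R$) is handled fairly directly via Remark~\ref{alt desc}, whereas the direction you treat as routine---showing that $\rho(\std(w_1,\ldots,w_i))$ really is a completion of the partial wall system below $\gamma_{w,i}$---is where the paper invests an inductive argument, again leaning on the alternate top-right description of $\rho$; you should not underestimate that step.
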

Comparison of the examples in Sections~\ref{tBax Hopf subsec} and~\ref{Rec operations sec} provides an illustration of the theorem.
We need the following lemma.

\begin{lemma}\label{restrict rec}
Let $x\in S_p$ and $y\in S_q$ and let $z$ be a term in $x\bullet_S y$.
Then $\rho(x)=\TL_p(\rho(z))$ and $\rho(y)=\BR_q(\rho(z))$.
\end{lemma}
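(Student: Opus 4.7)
The plan is to argue directly from the step-by-step construction of $\rho$ described before Theorem~\ref{rho bij}. First, classify the entries of $z$ by value: call $z_i$ an \emph{$x$-entry} if $z_i \leq p$ and a \emph{$y$-entry} if $z_i > p$. The defining property of a shifted shuffle is that the subsequence of $x$-entries of $z$, read in order, is $x_1 x_2 \cdots x_p$, while the subsequence of $y$-entries, after subtracting $p$, is $y_1 y_2 \cdots y_q$. Consequently the rectangles of $\rho(z)$ labeled $1,\ldots,p$ (those occupying the top-left $p$ diagonal segments) are drawn at $x$-steps, in the order specified by $x$, and those labeled $p+1,\ldots,n$ are drawn at $y$-steps, in the order specified by $y$.

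The heart of the proof is the following geometric separation property: in the construction of $\rho(z)$, every rectangle with label $\leq p$ lies in the top-left L-shape
\[\Lambda = \set{(a,b) \in S : a \le p \text{ or } b \ge n-p},\]
and every rectangle with label $> p$ lies in the bottom-right L-shape
\[\Lambda^* = \set{(a,b) \in S : a \ge p \text{ or } b \le n-p}.\]
I would prove this by induction on the construction step. Suppose we are about to add a rectangle at step $i$ with label $z_i \leq p$. The two endpoints $P$ and $P'$ of the diagonal segment labeled $z_i$ lie in the closed top-left subsquare $[0,p]\times[n-p,n]$. The rules defining the top-left corner of the new rectangle probe only points of $T_{i-1}$ that are directly left of, or directly above, $P$, and the rules for the bottom-right corner probe only points directly below, or directly right of, $P'$; all such candidate points lie in $\Lambda$. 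By the induction hypothesis the $y$-rectangles already in $T_{i-1}$ lie in $\Lambda^*$ and meet $\Lambda$ only on the shared boundary (the lines $a = p$ and $b = n-p$), so they cannot supply the extremal point selected by the rule in the open interior of $\Lambda$. Hence the new rectangle is placed in exactly the same way as in the construction of $\rho(x)$ carried out on $[0,p]^2$ (modulo the translation sending $[0,p]^2$ to $[0,p]\times[n-p,n]$). An analogous argument handles $y$-entries.

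With the separation property in hand the conclusion follows quickly. The rectangles of $\rho(z)$ whose interiors meet the top-left principal subsquare are exactly the $p$ rectangles with labels $\leq p$, and each such rectangle intersects the subsquare in the corresponding rectangle of $\rho(x)$ (translated by $(0,n-p)$); the wall structure transfers accordingly, giving $\TL_p(\rho(z)) = \rho(x)$. The equality $\BR_q(\rho(z)) = \rho(y)$ follows from the analogous argument with the roles of $x$ and $y$, and of top-left and bottom-right, interchanged.

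The main obstacle is the inductive verification of the separation property, particularly in the boundary cases $z_i = p$ (for an $x$-entry) and $z_i = p+1$ (for a $y$-entry). In these cases the diagonal endpoint $(p,n-p)$ is the unique point lying on both $\Lambda$ and $\Lambda^*$ but in the interior of neither, and one must check whether $(p,n-p) \in T_{i-1}$, identify which previously-drawn rectangle supplies the new corner, and verify that the choice still respects the separation. The analysis is elementary but splits into several sub-cases depending on whether $P$ and $P'$ belong to $T_{i-1}$.
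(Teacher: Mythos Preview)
Your approach is correct, and the separation property you isolate (that $x$-rectangles lie in the L-shape $\Lambda$ and $y$-rectangles in $\Lambda^*$) is exactly the geometric content underlying the lemma.  However, the paper's proof is considerably shorter because it uses a reduction step that you do not.  The paper first observes (as you do) that only the rectangles labeled $1,\ldots,p$ meet the interior of the top-left subsquare.  It then asserts that the restriction $\TL_p(\rho(z))$ depends \emph{only} on the relative order of the values $1,\ldots,p$ in $z$, i.e.\ only on $x$.  Granting this, one may verify the identity $\TL_p(\rho(z))=\rho(x)$ on a single convenient shuffle, namely the concatenation $z=x_1\cdots x_p(y_1+p)\cdots(y_q+p)$, where it is obvious.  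Your direct inductive argument, by contrast, tracks the construction of $\rho(z)$ step by step and checks that each $x$-step reproduces the corresponding step of $\rho(x)$ inside the subsquare; this is valid but forces you into the boundary case analysis you flag at the end.  The paper's shortcut buys exactly the avoidance of that analysis: once you know the answer is shuffle-independent, the boundary subtleties at $(p,n-p)$ evaporate because in the concatenation shuffle no $y$-rectangle is present when the $x$-rectangles are being built.  Both arguments rest on the same separation picture; the paper just leverages it more efficiently.
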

\begin{proof}
Let $n=p+q$.
It is immediate from the description of $\rho$ that the rectangles of $\TL_p(\rho(z))$ are the intersections of $[0,p]\times[q,n]$ with rectangles constructed from entries $z_j=i$ of $z$ with $i\in\set{1,\ldots,p}$.
Furthermore, $\TL_p(\rho(z))$ depends only on~$x$, (\emph{i.e.}\ only on the relative order of the values $1$ through $p$ in $z$).
Similarly, $\BR_q(\rho(z))$ depends only on $y$.
Since $\TL_p(\rho(z))=\rho(x)$ and $\BR_q(\rho(z))=\rho(y)$ when $z_1\cdots z_n=x_1\cdots x_p(y_1+p)\cdots(y_q+p)$, the lemma follows.
\end{proof}

\begin{proof}[Proof of Theorem~\ref{tBax dRec isom}] 
It is enough to show that $\rho$ is an algebra homomorphism and a coalgebra homomorphism.
Then Theorem~\ref{rho bij} implies that $\rho$ is an isomorphism, and thus dRec is a Hopf algebra because tBax is.

We first verify that $\rho(x\bullet_\tB y)=\rho(x)\bullet_\dR\rho(y)$. 
Let $x\in \tBax_p$ and $y\in \tBax_q$.
Since both $\bullet_\tB$ and $\bullet_\dR$ are sums with all coefficients one, the equation $\rho(x\bullet_\tB y)=\rho(x)\bullet_\dR\rho(y)$ holds if and only if the set of terms appearing in $\rho(x\bullet_\tB y)$ equals the set of terms appearing in $\rho(x)\bullet_\dR\rho(y)$.
Lemma~\ref{restrict rec} says that every term $\rho(z)$ in $\rho(x\bullet_\tB y)$ is also a term in $\rho(x)\bullet_\dR\rho(y)$.

On the other hand, let $R$ be a term in $\rho(x)\bullet_\dR\rho(y)$, so that $\rho(x)=\TL_p(R)$ and $\rho(y)=\BR_q(R)$.
Let $z=\tau(R)$, so that $\rho(z)=R$.
The restriction $x'$ of $z$ to the values $1,\ldots,p$ is a twisted Baxter permutation by Proposition~\ref{tBax restrict}, and Lemma~\ref{restrict rec} says that $\rho(x')=\TL_p(R)$.
Since $\rho$ is a bijection from $\tBax_p$ to $\dRec_p$, we conclude that $x=x'$, so that $x$ is the restriction of $z$ to the values $1,\ldots,p$.
Similarly, $y$ is the standardization of the restriction of $z$ to the values $p+1,\ldots,n$.
Thus $z$ is a term in $x\bullet_\tB y$, and so $R$ is a term in $\rho(x\bullet_\tB y)$. 

We now show that $(\rho\otimes\rho)(\Delta_\tB(z))=\Delta_\dR(\rho(z))$. 
Let $z\in\tBax_n$ and let $R=\rho(z)$.
For $p+q=n$, let $x\in\tBax_p$ and let $y\in\tBax_q$.
In light of Propositions~\ref{perm coeff 1} and~\ref{rec coeff 1}, we only need to show that the term $x\otimes y$ occurs in $\Delta_\tB(z)$ if and only if the term $\rho(x)\otimes \rho(y)$ occurs in $\Delta_\dR(R)$.

First, suppose that $\rho(x)\otimes \rho(y)$ is a term in $\Delta_\dR(R)$, and let $\gamma$ be the path such that $\rho(x)$ is a term in $A_\gamma$ and $\rho(y)$ is a term in $B_\gamma$.
We show that $x\otimes y$ is a term in $\Delta_\tB(z)$ by exhibiting a permutation $w$ such that $x=\std(w_1\cdots w_p)$, $y=\std(w_{p+1}\cdots w_q)$, and $\rho(w)=R$.
Numbering the rectangles of $R$, as usual, from $1$ to $n$ according to their intersections with the diagonal, from top-left to bottom-right, let $\Gamma$ be the set of numbers whose rectangles appear below/left of~$\gamma$.
Then there exists a unique permutation $x'$ of $\Gamma$ such that $\std(x')=x$ and a unique permutation $y'$ of $[n]\setminus\Gamma$ such that $\std(y')=y$.
We define $w$ to be the permutation in $S_n$ such that $w_1\cdots w_p=x'$ and $w_{p+1}\cdots w_n=y'$.

The permutation $x$ describes a sequence of rectangles in $\rho(x)$ such that each rectangle in the sequence has its left and bottom edges contained in the union of the preceding rectangles (or in the left or bottom edge of $S$).
But the rectangles below/left of $\gamma$ in $R$ are exactly the rectangles obtained as the intersection of a rectangle of $\rho(x)$ with the region below/left of $\gamma$.
Thus the sequence $x'$ describes a sequence of rectangles in $R$ such that each rectangle in the sequence has its left and bottom edges contained in the union of the preceding rectangles (or in the left or bottom edge of $S$).
Arguing as in the proof of Proposition~\ref{rho surj}, the first $p$ steps in the construction of $\rho(w)$ agree with $R$.
In other words, $\rho(w)$ agrees with $R$ below/left of $\gamma$.
We can apply the same argument to the alternate description of $\rho$ discussed in Remark~\ref{alt desc} to conclude that the first $q$ steps of the alternate description of $\rho(w)$ agree with $R$, so that $\rho(w)$ agrees with $R$ above $\gamma$.
Thus $\rho(w)=R$, or in other words, $w$ is in the $\Theta_\tB$-class of $z$.
We conclude that $x\otimes y$ is a term in $\Delta_\tB(R)$.

Conversely, suppose $x\otimes y$ is a term in $\Delta_\tB(z)$.
Thus there exists $w$ in the $\Theta_\tB$-class of $z$ such that $x=\std(w_1\cdots w_p)$ and $y=\std(w_{p+1}\cdots w_n)$.
Let $\gamma$ be the path in $R$ defined as the upper-right boundary of the union of the rectangles constructed in the first $p$ steps of the construction of $\rho(w)$, together with the left and bottom edges of $S$.
As discussed in connection with the definition of $\Delta_\dR$, if $p>0$, then there are exactly $p-1$ walls of $R$ that extend below/left of~$\gamma$.
The $p-1$ walls meet the diagonal in a set $X'$ of $p-1$ distinct points.
We now show that $\rho(x)$, constructed on the square $S$ with diagonal points $X'$, agrees with $R$ below/left of~$\gamma$.

We argue by induction on $p$.
The base case $p=1$ is easy, so suppose $p>1$.
Let $\tilde{\gamma}$ be the path separating rectangles constructed in steps $1$ through $p-1$ in the construction of $\rho(w)$ from the other rectangles of $R$.
Let $\tilde{X}'$ be the subset of $X$ constructed from $\tilde{\gamma}$ just as $X'$ was constructed from~$\gamma$.
Let $\tilde{R}$ be the diagonal rectangulation $\rho(\std(w_1\cdots w_{p-1}))$, as constructed on $S$, with respect to the diagonal points $\tilde{X}'$.
By induction, $\tilde{R}$ agrees with $R$ in the region strictly below/left of $\tilde{\gamma}$.

Consider the alternate description (see Remark~\ref{alt desc}) of $\rho(x)=\rho(\std(w_1\cdots w_p))$, as constructed on the square $S$, with respect to the diagonal points $X'$ and the alternate description of $\tilde{R}=\rho(\std(w_1\cdots w_{p-1}))$, as constructed with respect to the diagonal points $\tilde{X}'$.
Each process constructs a diagonal rectangulation by drawing a sequence of top- and right-justified rectangles.
Comparison of the two processes reveals that $\tilde{R}$ agrees with $\rho(x)$ in the region $S\setminus U$, where $U$ is the first top- and right-justified rectangle constructed in the alternate description of $\rho(x)$. 
We conclude that $\rho(x)$ agrees with~$R$ in the region strictly below/left of $\tilde{\gamma}$.
The rectangle between $\gamma$ and $\tilde{\gamma}$ is the part of $U$ that is below $\gamma$, so $\rho(x)$ agrees with~$R$ below $\gamma$.

In light of the alternate description of $\rho$, the symmetric proof shows that $\rho(y)$ and $R$ agree above $\gamma$.
We have showed that $\rho(x)\otimes\rho(y)$ is a term in $\Delta_\dR(R)$.
This completes the proof of the theorem.
\end{proof}

\begin{remark}\label{Rec auto}
Remark~\ref{tBax auto} discusses automorphisms and antiautomorphisms of tBax.
These exhibit themselves as (anti)automorphisms of dRec given by symmetries of the underlying square.  
The algebra antiautomorphism/coalgebra automorphism $\rv$ of tBax corresponds to the algebra antiautomorphism/coalgebra automorphism of dRec obtained by reflecting each rectangulation along the line segment connecting the lower-left corner of $S$ to the upper-right corner.
The algebra automorphism/coalgebra antiautomorphism $\rp$ of tBax corresponds to reflecting each rectangulation along the diagonal.
(See Remark~\ref{alt desc}.)
Thus the Hopf-algebra antiautomorphism $\rv\circ\rp$ corresponds to rotating each rectangulation one half-turn.
\end{remark}

\begin{remark}
The Hopf algebra LR is embedded as a Hopf subalgebra of dRec.
Specifically, a bottom tree $T$ maps to the sum of all diagonal rectangulations whose bottom half is $T$.
The analogous map embeds LR$'$ as a Hopf subalgebra of dRec.
These embeddings can be used to give descriptions of the product and coproduct in LR and LR$'$ analogous to the definition of the product and coproduct in dRec.
It is interesting to compare these descriptions to the descriptions, from~\cite[Section~1.5]{AgSoLR}, of the (dual) product and coproduct in LR$^*$.
\end{remark}

\section{The lattice and polytope of diagonal rectangulations}\label{lattice rec sec}
The weak order on $S_n$ modulo the congruence $\Theta_\tB$ is a lattice on the set of $\Theta_\tB$-classes.
A general fact about finite lattices, mentioned previously in Section~\ref{pattern sec}, implies that this quotient is isomorphic to the subposet of the weak order induced by the set twisted Baxter permutations.
Since $\Theta_\tB$-classes are in bijection with diagonal rectangulations, this quotient also defines a lattice structure on diagonal rectangulations.
We use the symbol $\dRec_n$ to stand for the set $\dRec_n$ endowed with this quotient lattice structure.
In this section, we describe the lattice of diagonal rectangulations, as a partial order, in terms of local moves on rectangles, analogous to the \emph{diagonal flips} on triangulations that define the Tamari lattice.
These local moves were already considered in \cite[Section~2]{ABP}.
We also construct a polytope, analogous to the associahedron, whose vertices are indexed by diagonal rectangulations.

As an example of the lattice of diagonal rectangulations, consider the case $n=4$.
The weak order on $S_4$ is illustrated in Figure~\ref{S4 weak fig}.
Applying $\rho$ to each element of $S_4$, we obtain all 22 elements of $\dRec_4$.
The permutations $3412$ and $3142$ both map to the same diagonal rectangulation and the permutations $2413$ and $2143$ both map to the same diagonal rectangulation.
The partial order $\dRec_4$ is shown in Figure~\ref{Rec4 fig}.

\begin{figure}[t]
\centerline{\includegraphics{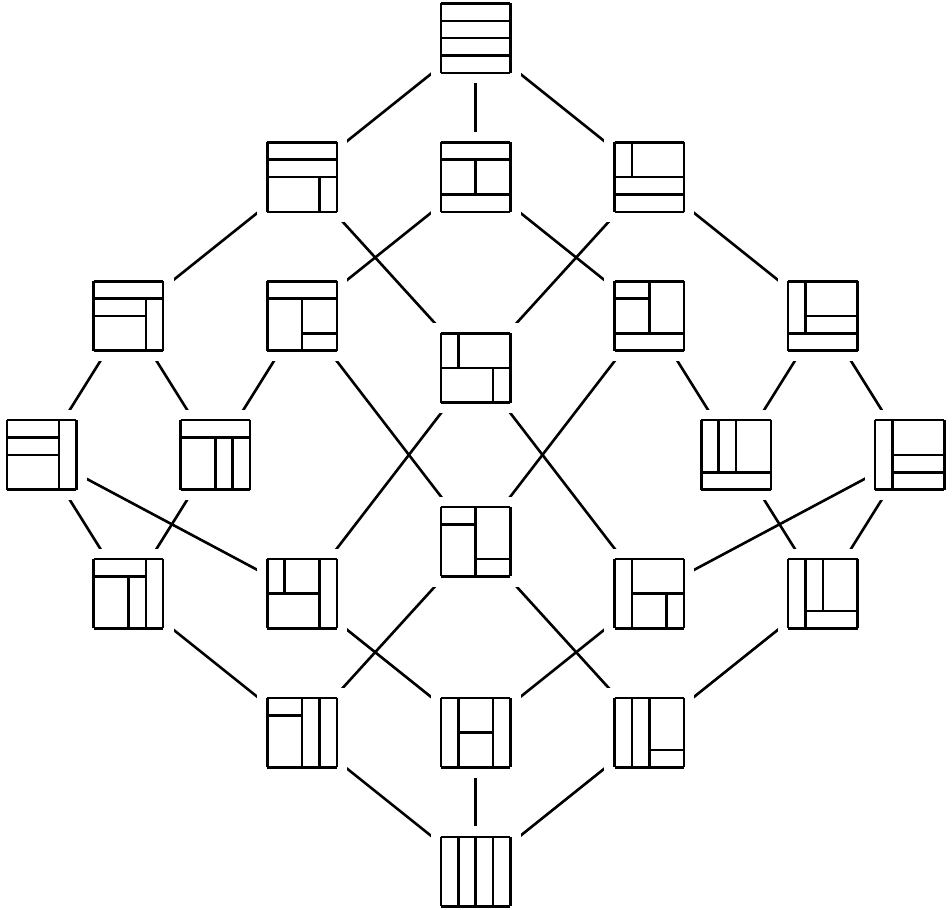}}
\caption{The lattice $\dRec_4$ of diagonal rectangulations of size 4}
\label{Rec4 fig}
\end{figure}

To describe the partial order $\dRec_n$ directly in terms of diagonal rectangulations, we need to define some further terminology.
\begin{figure}[t]
\centerline{\scalebox{.92}{\includegraphics{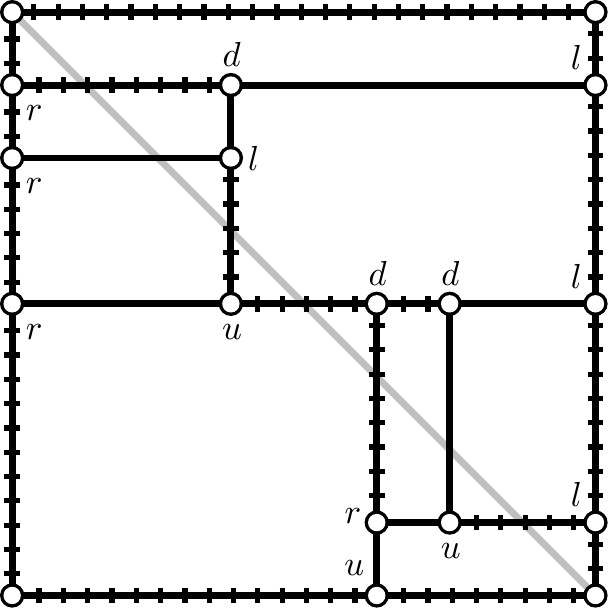}}}
\caption{An illustration of terminology related to pivots}
\label{pivot fig}
\end{figure}
Let $R$ be a diagonal rectangulation of size $n$.
A point is a \emph{vertex of $R$} if it is the vertex of some rectangle in $R$.
Vertices are shown as white dots in Figure~\ref{pivot fig}.
A line segment $E$ is an \emph{edge of $R$} if it is contained in the edge of some rectangle of $R$, if its endpoints are vertices of $R$ and if it contains no vertex of $R$ in its interior.

Every vertex $v$ of $R$, with the exception of the corners of $S$, is incident to exactly three edges, and we say that $v$ is a(n) \emph{up, down, right or left vertex} according to the direction of the unmatched edge (the edge which is not opposite another edge incident to $v$).
The vertices in Figure~\ref{pivot fig} are labeled ``u,'' ``d,'' ``r,'' or ``l'' for up, down, right or left.
Every left or down vertex must be above the diagonal and every right or up vertex must be below the diagonal.
(Otherwise, some rectangle incident to the vertex has its interior disjoint from the diagonal, in contradiction to Proposition~\ref{alt char}.)
If a vertex $v$ is not a corner of $S$, then exactly two edges incident to $v$ exit $v$ in the direction of the diagonal.  
One is the unmatched edge; we say that the other edge leaving $v$ towards the diagonal is \emph{locked} by $v$.
In addition, we declare that any edge incident to a corner of $S$ is locked by that corner.
It is easily verified that, with this convention, every edge contained in the boundary of $S$ is locked by some vertex.
The locked edges in Figure~\ref{pivot fig} are overwritten with hash marks.

An edge of $R$ can participate in a \emph{pivot} if and only if it is not locked.
The pivots are described as follows.
Let $E$ be a non-locked edge of $R$.

If $E$ is an unmatched edge of both of its endpoints, then the endpoints of $E$ are necessarily either a left vertex and a right vertex or a down vertex and an up vertex.
Furthermore, $E$ must, in this case, cross the diagonal, necessarily at a point $p$ in the set $X$.
There are two rectangles $U_1$ and $U_2$ of $R$ such that $E$ is an edge of $U_1$ and an edge of $U_2$.
To pivot the edge $E$, remove it and insert the unique other edge $E'$ containing $p$ that cuts the rectangle $U_1\cup U_2$ into two rectangles.
The result is another diagonal rectangulation $R'$ of size $n$.
The endpoints of $E$ are not vertices of $R'$, but they are replaced by two new vertices, the endpoints of $E'$.
This type of pivot is called a \emph{diagonal pivot}, because $E$ and $E'$ share a point on the diagonal.

Suppose $E$ is a matched edge of one of its endpoints $v$.
Then since $E$ is not locked, $E$ is the matched edge of $v$ which leaves $v$ in the direction away from the diagonal, and $E$ is the unmatched edge of its other endpoint $v'$.
In particular, the two rectangles $U_1$ and $U_2$ of $R$ whose boundaries contain $E$ together form a hexagon with five convex right angles and one concave right angle.
To pivot $E$, remove it and insert the unique other edge $E'$ that cuts $U_1\cup U_2$ into two rectangles.
Again, the pivot produces a diagonal rectangulation $R'$ of size $n$.
The point $v$ is a vertex of $R'$ but $v'$ is not.
It is replaced by a new vertex, the other endpoint of $E'$.
This type of pivot is called a \emph{vertex pivot}, because $E$ and $E'$ share a vertex of $R$ and of~$R'$.

In both cases, pivoting the edge $E$ produces a new edge $E'$ in $R'$, such that $E'$ is not locked and such that pivoting the edge $E'$ transforms $R'$ back to $R$.
Also, in both cases, a vertical edge is replaced by a horizontal edge or vice-versa.
There are eight rectangulations that are obtained from the diagonal rectangulation of Figure~\ref{pivot fig} by pivoting.
These are shown in Figures~\ref{pivot down} and~\ref{pivot up}.  
In each picture, the new edge appears in red (or gray).
We now state the main result of this section.

\begin{figure}
\begin{tabular}{ccccc}
\includegraphics{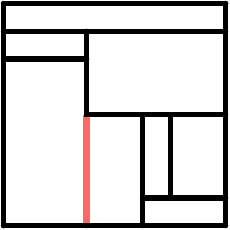}&&\includegraphics{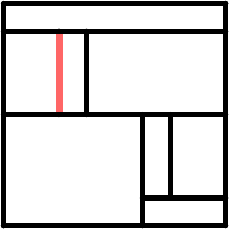}&&\includegraphics{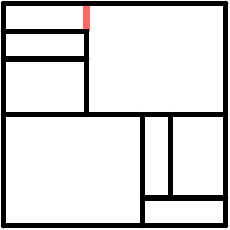}
\end{tabular}

\vspace{10 pt}

\begin{tabular}{ccc}
\includegraphics{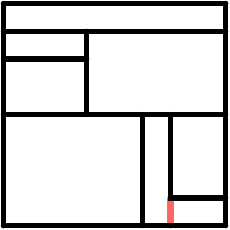}&&\includegraphics{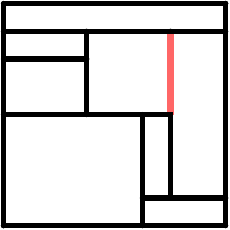}
\end{tabular}

\caption{Diagonal rectangulations covered by the diagonal rectangulation of Figure~\ref{pivot fig}}
\label{pivot down}
\end{figure}

\begin{figure}
\begin{tabular}{ccccc}
\includegraphics{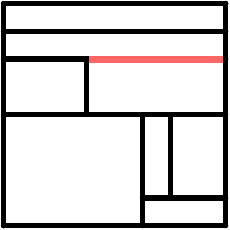}&&\includegraphics{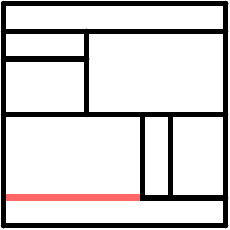}&&\includegraphics{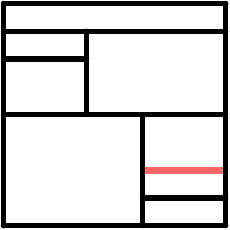}
\end{tabular}
\caption{Diagonal rectangulations which cover the diagonal rectangulation of Figure~\ref{pivot fig}}
\label{pivot up}
\end{figure}

\begin{theorem}\label{pivots}
Two diagonal rectangulations $R$ and $R'$ of size $n$ have $R\covered R'$ in $\dRec_n$ if and only if they are related by a pivot such that the pivoted edge in $R$ is vertical.
\end{theorem}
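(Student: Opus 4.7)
The plan is to derive the theorem from the description of $\dRec_n$ as the quotient of the weak order on $S_n$ modulo $\Theta_\tB$. By Proposition~\ref{rho classes} combined with the standard fact that covers in a finite lattice quotient arise from covers in the original lattice between distinct congruence classes, a cover $R\covered R'$ in $\dRec_n$ is the image under $\rho$ of some cover $x\covered y$ in the weak order with $\rho(x)=R$, $\rho(y)=R'$, and $\rho(x)\neq\rho(y)$. Such an $x\covered y$ arises by swapping adjacent entries $\alpha=x_i<x_{i+1}=\gamma$, so that $y_i=\gamma$ and $y_{i+1}=\alpha$. By Proposition~\ref{tBax props}.\ref{tBax moves}, $\rho(x)=\rho(y)$ iff the swap is a $(3412\to3142)$-move or a $(2413\to2143)$-move. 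Unwinding the pattern definitions shows that this holds precisely when there exist two distinct values $e,f\in(\alpha,\gamma)$ with $e$ in $y_1\cdots y_{i-1}$ and $f$ in $y_{i+2}\cdots y_n$, so $\rho(x)\neq\rho(y)$ is equivalent to the absence of such an ``interleaving'' pair.

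For the forward direction, I would compare the constructions of $\rho(x)$ and $\rho(y)$ from Section~\ref{bij sec} step by step. The first $i-1$ steps are identical, producing a common set $T_{i-1}$. Assuming no interleaving pair exists, I would establish two things: first, the union $U$ of the two rectangles added in steps $i$ and $i+1$ is the same for $\rho(x)$ as for $\rho(y)$, so $T_{i+1}$ is the same in both and the remaining steps agree; second, inside $U$ the partition into two rectangles differs. Specifically, in $\rho(x)$ the rectangle at diagonal position $\alpha$ is drawn first and lies to the upper-left of the one at position $\gamma$, so they meet along a vertical edge; reversing the construction order in $\rho(y)$ forces $\gamma$ into the same lower-right diagonal segment and $\alpha$ into the upper-left, but now meeting along a horizontal edge. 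This is precisely a pivot, with the pivoted edge in $R$ vertical, as claimed. The distinction between diagonal and vertex pivots corresponds to whether $U$ is a rectangle crossing the diagonal or a hexagon with a concave corner, determined by the local shape of $T_{i-1}$ near the diagonal points $\alpha-1,\alpha,\gamma-1,\gamma$.

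For the converse, suppose $E$ is a non-locked vertical edge in $R$ separating rectangles labeled $\alpha<\gamma$. These rectangles share an edge directly, so $\alpha<\gamma$ in the adjacency poset of $R$ (Remark~\ref{adjacency poset}), and in fact $\alpha$ is covered by $\gamma$ there, since no third rectangle can sit strictly between them in the poset given the geometry of the shared edge. Hence there exists a linear extension $x\in\rho^{-1}(R)$ placing $\alpha$ immediately before $\gamma$, and the swap produces $y$ with $x\covered y$ in the weak order and (by the forward analysis) $\rho(y)=R'$, so $R\covered R'$ in $\dRec_n$.

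The main obstacle is the forward direction's geometric bookkeeping: verifying that the no-interleaving criterion is precisely the condition under which the two rectangles at diagonal positions $\alpha$ and $\gamma$ occupy the same combined region $U$ regardless of the order of construction, and that inside $U$ the shared boundary is forced to be vertical in $\rho(x)$ and horizontal in $\rho(y)$. This requires a careful case analysis using the explicit corner-finding rules in the definition of $\rho$, splitting according to which of the two pivot types arises from the shape of $T_{i-1}$.
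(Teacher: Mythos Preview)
Your overall strategy---reducing covers in $\dRec_n$ to weak-order covers $x\covered y$ between distinct $\Theta_\tB$-classes and then comparing the constructions of $\rho(x)$ and $\rho(y)$ at the two positions where they differ---is exactly the paper's. Your forward direction is also essentially the paper's: the paper streamlines by taking $y\in\tBax_n$, which forces the values strictly between $y_{i+1}$ and $y_i$ to lie entirely on one side of positions $i,i+1$, yielding the same three cases you reach via your ``no interleaving'' criterion.

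The converse is where you diverge, and your version has a genuine gap. The paper works on the $R'$ side: it sets $y=\tau(R')\in\tBax_n$ and uses the specific minimality rule defining $\tau$ (always take the upper-leftmost admissible rectangle) to argue directly that the two rectangles involved in the pivot must occupy consecutive positions $i,i+1$ in $y$; the desired $x$ is then the swap. You instead work on the $R$ side via the adjacency poset of Remark~\ref{adjacency poset}, asserting that $\gamma$ covers $\alpha$ there whenever the shared vertical edge $E$ is non-locked. This assertion is correct, but your one-line justification (``no third rectangle can sit strictly between them\ldots given the geometry of the shared edge'') does not prove it: you must rule out chains $\alpha<\beta_1<\cdots<\beta_k<\gamma$ in the \emph{transitive closure}, not merely direct adjacencies, and this genuinely uses the non-locked hypothesis. (For instance, in $\rho(2143)$ the vertical edge between rectangles $2$ and $3$ is locked at both endpoints, and indeed $3$ does not cover $2$ in the adjacency poset since $2<1<3$ and $2<4<3$.) Once that cover relation is established, the remainder of your converse does go through: swapping $\alpha,\gamma$ in the linear extension $x$ yields a $y$ that violates $\alpha<\gamma$ and hence cannot lie in $\rho^{-1}(R)$, so $\rho(y)\neq\rho(x)$, the no-interleaving condition holds automatically, and your forward analysis then identifies the resulting pivot as exactly the pivot of $E$.
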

Theorem~\ref{pivots} can be proved by appealing to Proposition~\ref{10.2} and to descriptions of covers in the weak order modulo $\Theta_{231}$ and in the weak order modulo $\Theta_{312}$ that can be obtained as specializations of \cite[Proposition~5.3]{cambrian}.
However, to avoid having to translate these descriptions from the language of triangulations to the dual language of planar binary trees, we argue directly in terms of diagonal rectangulations.

\begin{proof}
Let $L$ be a finite lattice, let $\Theta$ be a congruence on $L$, and let $y$ be the minimal element in its $\Theta$-class.
Then \cite[Proposition~2.2]{con_app} states that a $\Theta$-class $C$ is covered by the class of $y$ in $L/\Theta$ if and only if some element of $C$ is covered by $y$ in $L$.
Specializing this fact to the weak order and the congruence $\Theta_\tB$, we rephrase the theorem as the following assertion:
For any $y\in\tBax_n$, $R'=\rho(y)$ and $R$ are related by pivots as in the statement of the theorem if and only if $R=\rho(x)$ for some permutation $x$ covered by $y$ in the weak order.

Let $y\in\tBax_n$ and suppose $x\covered y$ in the weak order.
Thus $x$ and $y$ differ only in positions $i$ and $i+1$, and $x_i=y_{i+1}<y_i=x_{i+1}$.
Consider the partial diagonal rectangulation created from either $x$ or $y$ in the first $i-1$ steps of $\rho$ and how the next two steps proceed for $x$ and $y$.
As in the proof of Proposition~\ref{rho surj}, we let $T$ be the union of the left and bottom edges of $S$ with the rectangles produced in the first $i-1$ steps.
We let $X_i$ and $X_{i+1}$ be the rectangles produced in steps $i$ and $i+1$ of the construction of $\rho(x)$ and define $Y_i$ and $Y_{i+1}$ analogously for $\rho(y)$.
Since $y$ is a twisted Baxter permutation, one of the following three cases applies.
In every case, it is apparent that, after step $i+1$, the constructions of $\rho(x)$ and $\rho(y)$ proceed identically.

\noindent\textbf{Case 1}: $y_i=y_{i+1}+1$.
The rectangles $Y_i$ and $Y_{i+1}$ have a diagonal point $p$ in common, and $p$ is in the interior of an edge of each rectangle.
Let $L$ be the (necessarily horizontal) edge of $R'$ containing $p$.
(Thus $L$ is the intersection of the edge of $Y_i$ containing $p$ with the edge of $Y_{i+1}$ containing $p$.)
If the endpoint~$q$ of $L$ that is above/right of the diagonal is a down vertex, then the rectangle whose top-left corner is~$q$ must be chosen after $Y_i$ but before $Y_{i+1}$.
If the endpoint $r$ of $L$ that is below/left of the diagonal is an up vertex, then the rectangle whose bottom-right corner is $r$ must be chosen after $Y_i$ but before $Y_{i+1}$.
Since $Y_{i+1}$ is chosen immediately after $Y_i$ in the construction of $\rho(y)$, we conclude that~$q$ is a left vertex, that $r$ is a right vertex, and therefore that $Y_i\cup Y_{i+1}$ is a rectangle.
The same argument shows that $X_i\cup X_{i+1}$ is a rectangle, and it is easy to see that the rectangles $X_i\cup X_{i+1}$ and $Y_i\cup Y_{i+1}$ coincide.
Thus $\rho(x)$ and $\rho(y)$ are related by a diagonal pivot.

\noindent\textbf{Case 2}: $y_i>y_{i+1}+1$ and every $a$ with $y_{i+1}<a<y_i$ occurs to the left of position $i$ in $y_1\cdots y_n$.
In this case, all rectangles labeled $a$ with $y_{i+1}<a<y_i$ are in $T$.
Thus, following the boundary of $T$ from the bottom-left corner of $Y_{i+1}$ to the bottom-left corner of $Y_i$ we move to the right to a point that we call $p$, and then we move down, making no other turns.  
The point $p$ is the top-left corner of $Y_i$, and the top-right corner of $Y_i$ is the bottom-right corner of $Y_{i+1}$.
On the other hand, $p$ is the bottom-right corner of $X_i$, and the top-right corner of $X_i$ is the top-left corner of $X_{i+1}$.
Thus $\rho(x)$ and $\rho(y)$ are related by a vertex pivot, where the pivot-vertex is $p$.

\noindent\textbf{Case 3}: $y_i>y_{i+1}+1$ and every $a$ with $y_{i+1}<a<y_i$ occurs to the right of position $i+1$ in $y_1\cdots y_n$.
In this case, no rectangles labeled $a$ with $y_{i+1}<a<y_i$ are in $T$, and the boundary of $T$ moves first down, then right as we traverse it from the top-left corner of $Y_{i+1}$ to the bottom-right corner of $Y_i$.
Similarly to Case 2, we see that $\rho(x)$ and $\rho(y)$ are related by a vertex pivot.

In every case, the pivoted edge is vertical in $\rho(x)$ and horizontal in $\rho(y)$.

Conversely, suppose that $y\in\tBax_n$ and that $R'=\rho(y)$ and $R$ are related by pivots as in the statement of the theorem.
Since the pivoted edge in $R$ is vertical, the pivoted edge in $R'$ is horizontal, so one of the rectangles involved in the pivot is lower than the other in $R'$.
Call the lower rectangle $Y_i$, and let $y_i$ be the entry of $y$ associated to $Y_i$ by $\rho$.
The other rectangle, $Y_j$ is above $Y_i$ and so is associated to a later entry $y_j$ of $y$ with $y_j<y_i$.
The rectangles $Y_i$ and $Y_j$ can stand in three possible relationships, illustrated in Figure~\ref{config}.
\begin{figure}
\psfrag{Yi}{$Y_i$}
\psfrag{Yj}{$Y_j$}
\begin{tabular}{ccccc}
\scalebox{.98}{\includegraphics{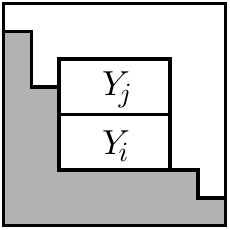}}&&\scalebox{.98}{\includegraphics{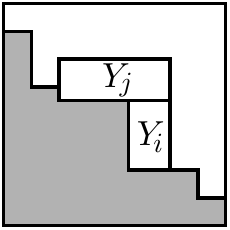}}&&\scalebox{.98}{\includegraphics{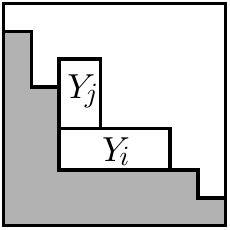}}
\end{tabular}
\caption{An illustration for the proof of Theorem~\ref{pivots}}
\label{config}
\end{figure}
Again let $T$ be the union of the left and bottom edges of $S$ with the rectangles produced in the first $i-1$ steps of constructing $\rho(y)$.
The region $T$ is indicated in gray in Figure~\ref{config}.
\emph{A priori}, the left edge of $Y_j$ may not be contained in $T$, but we argue below that it is.

Since $y=\tau(R')$, in the sense of Proposition~\ref{rho inv}, $y_{i+1}$ is the smallest value in $\set{y_{i+1},\ldots,y_n}$ such that the rectangle $Y_{i+1}$ labeled $y_{i+1}$ has its left and bottom edges contained in the union $T\cup Y_i$ of the left and bottom edges of $S$ with the rectangles chosen in steps 1 to $i$.
We claim that $j=i+1$.
The claim is equivalent to two assertions:
First, the assertion that the left and bottom edges of $Y_j$ are contained in $T\cup Y_i$; and second, the assertion that $y_j$ is the smallest value in $\set{y_{i+1},\ldots,y_n}$ such that the rectangle $Y_j$ has this property.
If either assertion fails, then inspection of each case illustrated in Figure~\ref{config} reveals that some other rectangle, above/left of $Y_i$, could have been chosen in step $i$, contradicting the fact that $y=\tau(R')$.

In light of the claim, we can take $x$ to be the permutation obtained from $y$ by swapping $y_i$ and $y_{i+1}$, so that $x\covered y$ in the weak order.
Inspection of the three cases illustrated in Figure~\ref{config} (corrected to show that the left edge of $Y_j$ is contained in $T$) shows that $R=\rho(x)$.
\end{proof}

We now describe the construction of a polytope whose vertices are indexed by diagonal rectangulations and whose edges correspond to pivots.
The construction involves fans, particularly normal fans of polytopes; we refer the reader to \cite[Lecture~7]{Ziegler} for background.
Consider the arrangement $\Arr$ of hyperplanes in $\reals^n$ given by equations $x_i=x_j$ for each $i$ and $j$ with $1\le i<j\le n$.
This is called the \emph{braid arrangement} or the \emph{Coxeter arrangement for $S_n$}.
The complement of $\cup_{H\in\Arr}H$ in $\reals^n$ consists of $n!$ connected components, whose closures are called \emph{regions}, and which correspond naturally to the elements of $S_n$:
For each permutation $x_1x_2\cdots x_n\in S_n$, there is a unique region of $\Arr$ whose interior contains the vector $(x_1,x_2,\ldots,x_n)\in\reals^n$, and each region of $\Arr$ contains exactly one such vector.
The regions are the maximal cones of a complete fan $\F$, which is the normal fan of a polytope called the \emph{permutohedron}.

Any lattice congruence $\Theta$ on the weak order can be interpreted as an equivalence relation on regions.
Consider the union of the regions in some equivalence class.
As a special case of \cite[Theorem~5.1]{con_app}, each such union is a convex cone, and furthermore these cones are the maximal cones of a fan $\F_\Theta$ which coarsens $\F$.
Hohlweg and Lange \cite[Proposition~4.1]{HohLan} showed that the fans $\F_{\Theta_{231}}$ and $\F_{\Theta_{312}}$ are the normal fans of polytopes which we call $P_{231}$ and $P_{312}$ respectively.
Both $P_{231}$ and $P_{312}$ are realizations of the \emph{associahedron}, meaning that their vertices are indexed by the set of triangulations of an $(n+2)$-gon and their edges correspond to diagonal flips.
Proposition~\ref{10.2} says that $\F_{\Theta_{\tB}}$ is the coarsest fan that refines both $\F_{\Theta_{231}}$ and $\F_{\Theta_{312}}$.
Thus a well-known fact on Minkowski sums (for example, see \cite[Proposition~7.12]{Ziegler}) implies that $\F_{\Theta_{\tB}}$ is the normal fan of the polytope, which we call $P_{\dRec}$, given by the Minkowski sum $P_{231}+P_{312}$.
Now \cite[Proposition~3.3]{con_app} and \cite[Theorem~5.1]{con_app} combine to say that the Hasse diagram of $\dRec_n$ orients the $1$-skeleton of $P_{\dRec}$.
Thus by Theorem~\ref{pivots}, the vertices of $P_{\dRec}$ are indexed by diagonal rectangulations and the edges of $P_{\dRec}$ correspond to pivots.

\begin{remark}\label{connected}
Since $\dRec_n$ is a finite lattice, its Hasse diagram is in particular connected.
In light of Theorem~\ref{pivots}, we recover the observation, originally made in \cite[Lemma~2.3]{ABP}, that the edge-pivoting graph on diagonal rectangulations of size $n$ is connected.
Since this graph is also the $1$-skeleton of the $(n-1)$-dimensional polytope $P_{\dRec}$, Balinski's Theorem (for example, see \cite[Theorem~3.14]{Ziegler}) implies the following stronger result:
The graph on diagonal rectangulations of size $n$ is $(n-1)$-connected, meaning that the removal of at most $n-2$ vertices leaves a connected graph.
\end{remark}

\begin{remark}\label{2n+2}
Edge-pivoting preserves the number of vertices of a diagonal rectangulation.
We thus easily conclude that every diagonal rectangulation of size $n$ has exactly $2n+2$ vertices.
\end{remark}

\begin{remark}\label{properties}
Beyond the results mentioned above, \cite[Theorem~5.1]{con_app} implies additional detailed combinatorial information about $\dRec_n$.
In particular, open intervals in $\dRec_n$ are either homotopy equivalent to spheres or are contractible, and thus M\"{o}bius function values are in $\set{-1,0,1}$.
Furthermore, $\dRec_n$ is the partial order on the maximal cones of $\F_{\Theta_{\tB}}$ induced by a certain linear functional.
For definitions and details, see~\cite[Sections~2--5]{con_app}.
\end{remark}

\section{Baxter permutations and twisted Baxter permutations}\label{Bax tBax sec}
In Section~\ref{tBax sec}, we saw that the definitions of twisted Baxter permutations and Baxter permutations are closely related syntactically.
Specifically, recall that a permutation is a twisted Baxter permutation if and only if it avoids both $3$-$41$-$2$ and $2$-$41$-$3$, and that a permutation is a Baxter permutation if and only if it avoids both $3$-$14$-$2$ and $2$-$41$-$3$.
While syntactically similar definitions often produce quite different mathematical objects, we have already seen that the two definitions produce sets of permutations sharing the same cardinality (the Baxter number).
In this section, we explain the relationship between Baxter permutations and twisted Baxter permutations by giving a bijection that is built directly upon the syntactic relationship.
As has been the case throughout the paper, lattice congruences of the weak order play a key role.
The bijection from Baxter permutations to twisted Baxter permutations allows us to carry both a natural Hopf algebra structure and a natural lattice structure over to Baxter permutations.

\subsection{Bijection}\label{Bax tBax bij}
To define the bijection between twisted Baxter permutations and Baxter permutations, we appeal to yet another special case of the results of \cite[Section~9]{con_app}.
There is a lattice congruence $\Theta_{3412}$ on each $S_n$ with associated projection maps $\pidown^{3412}$ and $\piup_{3412}$ such that the family of congruences $\Theta_{3412}$ is an $\H$-family, and thus has the following properties:

\begin{prop}\label{3412 props}

\noindent
\begin{enumerate}
\item \label{3412 bottom}
A permutation is the minimal element in its $\Theta_{3412}$-class if and only if it avoids $3$-$41$-$2$.
\item \label{3412 top}
A permutation is the maximal element in its $\Theta_{3412}$-class if and only if it avoids $3$-$14$-$2$.
\item Suppose $x\covered y$ in the weak order.  Then $x\equiv y$ modulo $\Theta_{3412}$ if and only if $x$ is obtained from $y$ by a $(3412\to3142)$-move.
\item Let $y\in S_n$.
If some permutation $x\in S_n$ can be obtained from $y$ by a $(3412\to3142)$-move, then $\pidown^{3412}(y)=\pidown^{3412}(x)$.
Otherwise, $\pidown^{3412}(y)=y$.  
\item Let $x\in S_n$.
If some permutation $y\in S_n$ can be obtained from $x$ by a $(3142\to3412)$-move, then $\piup_{3412}(x)=\piup_{3412}(y)$.
Otherwise, $\piup_{3412}(x)=x$. 
\end{enumerate}
\end{prop}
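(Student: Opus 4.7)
The approach is to realize $\Theta_{3412}$ as a specific instance of an $\H$-family from \cite[Section~9]{con_app}, following the same strategy used to establish Proposition~\ref{tBax props} for $\Theta_{\tB}$. The starting point is to define $\Theta_{3412}$ on $S_n$ to be the smallest lattice congruence of the weak order such that $x\equiv y$ whenever $x\covered y$ and $y$ is obtained from $x$ by a $(3142\to3412)$-move. Equivalently, one defines $\Theta_{3412}$ as the congruence arising from \cite[Theorem~9.3]{con_app} when the set of forbidden patterns is taken to be $\{3\text{-}41\text{-}2\}$ alone, and verifies that the family $\{\Theta_{3412}\}_{n\ge 0}$ satisfies the axioms of an $\H$-family. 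The pattern $3$-$41$-$2$ is of precisely the admissible type considered in \cite[Section~9]{con_app} (two outer values flanking two adjacent middle entries in a prescribed order), so the general machinery applies directly. This is strictly simpler than the corresponding verification for $\Theta_{\tB}$ in Proposition~\ref{tBax props}, since only one pattern is involved rather than two.

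With this identification in place, I would derive property~\eqref{3412 bottom} as follows: a permutation is minimal in its class exactly when no $(3412\to3142)$-move can be applied, which happens precisely when it avoids $3$-$41$-$2$. The nontrivial direction uses Proposition~\ref{23 adj}: containing the pattern $3$-$41$-$2$ anywhere forces the existence of such an occurrence where the ``3'' and ``2'' differ in value by exactly one, and for such an occurrence the swap is an admissible cover in the weak order. Property~\eqref{3412 top} is the dual statement, obtained by symmetric reasoning with the pattern $3$-$14$-$2$ and $(3142\to3412)$-moves. Property~(3) follows from the general fact that congruence classes in a finite lattice are intervals, so any covering pair within a single class must be ``atomic'' in the generating set, namely a $(3412\to3142)$-move. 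Properties~(4) and~(5) then follow by induction on weak order length: from any $y$, either no move applies and $y$ is already minimal (by~\eqref{3412 bottom}), or one can move downward within the class and apply the inductive hypothesis.

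The main obstacle is the verification that the single-pattern version of the construction still produces a well-defined lattice congruence. Although the results of \cite[Section~9]{con_app} apply to a broad family of pattern sets, one must check that the axioms apply when the set contains just $3$-$41$-$2$, in particular that the relation generated by $(3412\to3142)$-moves already respects meets and joins in the weak order, rather than only doing so after enlarging the move set to include $(2413\to2143)$-moves as well. Concretely, one verifies that if $x\covered y$ via a $(3412\to 3142)$-move and $z$ is any other element, then $x\join z$ and $y\join z$ lie in the same $\Theta_{3412}$-class (and dually for meets); this is a direct inversion-set computation once the local structure of the swap is described. Everything else is then a direct transcription of the argument for Proposition~\ref{tBax props}, with half of the casework removed.
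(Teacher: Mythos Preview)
Your approach is essentially the same as the paper's: the proposition is stated there without proof, simply as ``yet another special case of the results of \cite[Section~9]{con_app},'' exactly as you propose. The one place you diverge is in your final paragraph, where you worry that the single-pattern case might require a separate inversion-set verification of the congruence axioms; this is unnecessary, since the framework of \cite[Section~9]{con_app} already applies to any admissible set of patterns of the required shape, including a singleton, so no additional direct computation is needed.
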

The main result of this section is the following theorem.

\begin{theorem}\label{Bax tBax}
The restriction of $\pidown^{3412}$ is a bijection from Baxter permutations to twisted Baxter permutations.
Its inverse is the restriction of $\piup_{3412}$.
\end{theorem}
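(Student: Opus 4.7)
The plan is to combine Proposition~\ref{3412 props} with a single combinatorial claim about the pattern $2$-$41$-$3$ to obtain the theorem. First, parts~(1), (2), (4), and~(5) of Proposition~\ref{3412 props} directly imply that $\pidown^{3412}$ and $\piup_{3412}$ are mutually inverse bijections between the set of permutations in $S_n$ avoiding $3$-$41$-$2$ (the minima of $\Theta_{3412}$-classes) and the set of permutations avoiding $3$-$14$-$2$ (the maxima of $\Theta_{3412}$-classes). Since twisted Baxter permutations are exactly the $3$-$41$-$2$-avoiders that additionally avoid $2$-$41$-$3$, and Baxter permutations are exactly the $3$-$14$-$2$-avoiders that additionally avoid $2$-$41$-$3$, the theorem reduces to the following equivalence: for every $\Theta_{3412}$-class $C$, the minimum of $C$ avoids $2$-$41$-$3$ if and only if the maximum of $C$ does.

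To prove this equivalence, I would argue each direction combinatorially, working directly with the extrema $u = \min C$ and $z = \max C$ rather than intermediate elements. Starting from an assumed $2$-$41$-$3$-pattern on some entries $p < q < r < s$ at one extremum, I would use the weak-order containment $u \le z$ (so that $\inv(u) \subseteq \inv(z)$) to pin down the possible relative positions of these four entries at the other extremum. The avoidance of $3$-$14$-$2$ at $z$ (respectively of $3$-$41$-$2$ at $u$) then rules out any configuration in which no $2$-$41$-$3$-pattern survives, forcing such a pattern to exist at the other extremum, possibly on a different four-element subset.

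The main obstacle is that a single $(3412 \to 3142)$-move can simultaneously create and destroy $2$-$41$-$3$-patterns, so a naive induction along a chain of moves from $u$ to $z$ fails; indeed, one can easily produce a class whose minimum and maximum both avoid $2$-$41$-$3$ while some intermediate element contains it. The argument must therefore exploit the extremal pattern-avoidance at both ends of $C$ globally, not move by move. With the key equivalence established, the theorem follows immediately: the restriction of $\pidown^{3412}$ to Baxter permutations takes values in twisted Baxter permutations, the restriction of $\piup_{3412}$ to twisted Baxter permutations takes values in Baxter permutations, and by the first paragraph these restrictions are mutually inverse.
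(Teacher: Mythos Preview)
Your reduction is correct: the theorem is equivalent to the assertion that, within each $\Theta_{3412}$-class, the minimum avoids $2$-$41$-$3$ if and only if the maximum does. But your plan for proving this equivalence has a genuine gap, and it diverges from the paper in a way that matters.

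The paper does \emph{not} prove both directions combinatorially. It proves only one direction (Lemma~\ref{2413 lemma}: if the top avoids $2$-$41$-$3$ then so does the bottom), and even that proof is not the ``global extrema via inversion containment'' argument you sketch. Rather, it is a carefully engineered descent: starting from a non-minimal $y$ avoiding $2$-$41$-$3$, the paper identifies specific entries $e,f_1,\ldots,f_i,d,a,g_1,\ldots,g_j,h$ surrounding a $3$-$41$-$2$ instance and moves an entire block $a\,g_1\cdots g_j$ leftward past $f_1\cdots f_i\,d$ in one stroke, then verifies in six cases that no $2$-$41$-$3$ is created. So the paper's successful argument \emph{is} move-based, just with large composite moves chosen so that the case analysis closes. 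Your proposal to work only with $u=\min C$ and $z=\max C$ via $\inv(u)\subseteq\inv(z)$ is too coarse: inversion-set containment tells you nothing about the adjacency of the ``$4$'' and the ``$1$'', which is the whole point of the pattern, and you give no mechanism for producing the new four-element subset you acknowledge may be needed.

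The other direction (bottom avoids $\Rightarrow$ top avoids) is where the paper brings in real outside machinery: Lemma~\ref{Rec to Bax} shows that every $\Theta_{\tB}$-class contains a Baxter permutation, by running an explicit algorithm on the diagonal rectangulation $\rho^{-1}$ of the class and checking that the output is Baxter. Surjectivity of $\pidown^{3412}$ onto twisted Baxter permutations then follows because $\Theta_{\tB}$-classes are unions of $\Theta_{3412}$-classes. There is no evident symmetry (via $\rp$, $\rv$, or $\rv\circ\rp$) that would let you deduce this direction from Lemma~\ref{2413 lemma}, and the paper does not attempt one. Your proposal gives no argument for this direction beyond the same vague inversion-set sketch, so as written the plan is incomplete precisely at the step the paper found hardest.
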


To prove the theorem, we prove two lemmas:
\begin{lemma}\label{2413 lemma}
Suppose $x$ is the bottom element of some $\Theta_{3412}$-class and $y$ is the top element of the same $\Theta_{3412}$-class, so that $\pidown^{3412}(y)=x$ and $\piup_{3412}(x)=y$.
If $y$ avoids $2$-$41$-$3$ then $x$ avoids $2$-$41$-$3$.
\end{lemma}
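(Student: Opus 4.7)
The plan is to prove the contrapositive: assume $x$ contains a $2$-$41$-$3$ pattern $(\beta,\delta,\alpha,\gamma)$ at positions $(q_1,q_2,q_2+1,q_4)$ with $\alpha<\beta<\gamma<\delta$, and produce a $2$-$41$-$3$ pattern in $y$. By Proposition~\ref{3412 props}, $y$ is obtained from $x$ by a sequence of $(3142\to3412)$-moves going up the weak order, so fix a saturated chain $x=z_0\covered z_1\covered\cdots\covered z_m=y$ inside the $\Theta_{3412}$-class. I would attempt an induction on $m$, with inductive step: if $z_k$ contains a $2$-$41$-$3$ pattern then so does $z_{k+1}$.

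The inductive step reduces to a case analysis of the single $(3142\to3412)$-move taking $z_k$ to $z_{k+1}$, which swaps adjacent entries $a,d$ at positions $i,i+1$ (with $a<d$, some $c$ at a position $j<i$ satisfying $a<c<d$, and some $b$ at a position $k'>i+1$ satisfying $a<b<c$). The cases are organized by the intersection of $\{i,i+1\}$ with $\{q_1,q_2,q_2+1,q_4\}$. In most configurations the pattern in $z_k$ transfers to $z_{k+1}$ either at the same positions (with $a$ or $d$ substituted appropriately) or after a one-position shift; these sub-cases are routine. The interesting cases are those in which the move disrupts the pattern's adjacency at $(q_2,q_2+1)$, where I would attempt to build a replacement $2$-$41$-$3$ pattern using the move-context entries $b$ or $c$. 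For instance, when $\alpha=a$ (so $q_2=i-1$ and $q_2+1=i$) and the residual inequalities force $\beta<d<\gamma<\delta$, the candidate $(\beta,d,a,b)$ at $(q_1,i,i+1,k')$ is a $2$-$41$-$3$ pattern in $z_{k+1}$ provided $\beta<b$, while $\beta\geq b$ would produce a $3$-$41$-$2$ pattern $(\beta,\delta,a,b)$ in $z_k$.

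The main obstacle is that this rescue argument works cleanly when $z_k$ avoids $3$-$41$-$2$, which is guaranteed for $z_0=x$ by Proposition~\ref{3412 props}.\ref{3412 bottom}, but in general fails for intermediate $z_k$ with $k\geq 1$. Indeed, single-step preservation of $2$-$41$-$3$-containment is genuinely false: a direct calculation shows that a permutation containing $2$-$41$-$3$ can map, under a single $(3142\to3412)$-move, to a permutation avoiding $2$-$41$-$3$ (and conversely). The induction must therefore carry a strictly stronger invariant — for example, at each $z_k$ one would track a $2$-$41$-$3$ pattern together with auxiliary structural data ensuring that such a pattern (or a suitable successor) will still be present at the terminal step. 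Formulating and verifying such an invariant, and using the fact that $y$ is the top of the class so $y$ avoids $3$-$14$-$2$ by Proposition~\ref{3412 props}.\ref{3412 top} to anchor the invariant at $z_m=y$, is the hardest part of the proof.
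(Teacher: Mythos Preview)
You have correctly diagnosed the obstruction: single $(3142\to3412)$-moves do not preserve containment of $2$-$41$-$3$, so a naive step-by-step induction up a saturated chain fails. But your proposal then stops at exactly this point. You acknowledge that one needs a ``strictly stronger invariant,'' gesture at tracking auxiliary data, and label this ``the hardest part of the proof'' without ever specifying the invariant or verifying it. As written, this is not a proof but a description of why the obvious proof does not work.

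The paper's argument avoids the obstruction by two changes of strategy. First, it works \emph{downward} from $y$ rather than upward from $x$: one shows that if $y$ avoids $2$-$41$-$3$ and is not minimal in its $\Theta_{3412}$-class, then there exists a strictly lower element $y'$ in the same class that also avoids $2$-$41$-$3$; iterating reaches the bottom element $x$. Second, and crucially, $y'$ is \emph{not} obtained by a single $(3412\to3142)$-move. Instead, one chooses an instance $cdab$ of $3$-$41$-$2$ in $y$, lets $f_1\cdots f_i$ be the maximal block of entries $>d$ immediately to the left of $d$ and $g_1\cdots g_j$ the maximal block of entries $<a$ immediately to the right of $a$, and slides the entire block $a\,g_1\cdots g_j$ leftward past $f_1\cdots f_i\,d$. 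This is a composite of $(3412\to3142)$-moves, so $y'$ lies in the same $\Theta_{3412}$-class; the point is that by moving a whole block at once one can verify, via a short case analysis on where the ``$1$'' of a putative $2$-$41$-$3$ in $y'$ sits, that any such pattern would already have been present in $y$. The intermediate permutations along the corresponding saturated chain may well contain $2$-$41$-$3$; the argument never asserts otherwise.

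In short, the missing idea is to replace single cover-moves by a carefully chosen block move that lands you back in the $2$-$41$-$3$-avoiding world, rather than trying to maintain an invariant along an arbitrary saturated chain.
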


\begin{lemma}\label{Rec to Bax}
Every $\Theta_\tB$-class contains a Baxter permutation.
\end{lemma}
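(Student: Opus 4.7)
The plan is to prove the lemma by explicitly producing, for each diagonal rectangulation $R \in \dRec_n$, a Baxter permutation $\beta(R)$ lying in the $\Theta_\tB$-class $\rho^{-1}(R)$. By Proposition~\ref{rho classes}, the fibers of $\rho$ are precisely the $\Theta_\tB$-classes, so such a $\beta(R)$ is a Baxter representative of the class corresponding to $R$.

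Following \cite{FFNO}, I would construct $\beta(R)$ using the same step-by-step procedure as in the proof of Proposition~\ref{rho surj}, but with a different selection rule than the one defining $\tau(R)$. Recall that at step $i$ of that procedure, having placed rectangles labeled $\beta_1,\ldots,\beta_{i-1}$, one must choose the label of some ``available'' rectangle among $U_1,\ldots,U_k$ (those whose left and bottom edges lie in the union $T_{i-1}$ of previously-placed rectangles). The permutation $\tau(R)$ always chooses the top-leftmost $U_1$, producing a twisted Baxter permutation; for $\beta(R)$ one instead uses the selection rule from \cite{FFNO}, which at each step chooses among $U_1,\ldots,U_k$ based on how the rectangle $\beta_{i-1}$ meets the current frontier (roughly, whether the previous rectangle's right edge or top edge is flush with the left or bottom edge of the candidate). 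Since any valid step-by-step choice produces a linear extension of the adjacency poset of $R$ (Remark~\ref{adjacency poset}), the identity $\rho(\beta(R))=R$ is immediate from the proof of Proposition~\ref{rho surj}; the content is to check that the particular rule used here produces a Baxter permutation.

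The bulk of the proof is thus the verification that $\beta(R)$ avoids both patterns that define a Baxter permutation, namely $3$-$14$-$2$ and $2$-$41$-$3$. For each pattern, I would argue by contradiction: suppose $\beta(R)$ contains an instance $cadb$ (resp.\ $bdac$) with the middle two entries adjacent in the one-line notation. Examining the step of the construction at which this adjacent pair is placed, the adjacency translates into a specific local geometric relationship between the two corresponding rectangles in $R$. Combining this with the geometric positions of the ``outer'' entries $c$ and $b$ (which are placed, respectively, before and after the adjacent pair) then forces a configuration that contradicts the selection rule. The main obstacle is carrying out this case analysis for both patterns simultaneously: the two forbidden patterns are asymmetric in how they relate to the diagonal, and the rule from \cite{FFNO} is calibrated precisely so that no step can create either pattern. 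Verifying this rigorously amounts to a careful but finite case analysis of the possible local configurations of four rectangles relative to the diagonal and to the boundary of $T_{i-1}$, which I expect to be the most delicate part of the argument.
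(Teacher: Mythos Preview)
Your plan is essentially the paper's own proof: reduce via Proposition~\ref{rho classes} to producing, for each diagonal rectangulation $R$, a Baxter permutation mapping to $R$ under $\rho$; build this permutation by the rectangle-by-rectangle procedure of Proposition~\ref{rho surj} using the selection rule of~\cite{FFNO}; and then rule out the patterns $3$-$14$-$2$ and $2$-$41$-$3$ by contradiction, translating a hypothetical occurrence into an impossible geometric configuration at the step where the adjacent pair is placed.

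One point you gloss over deserves explicit mention. You write that the FFNO rule ``chooses among $U_1,\ldots,U_k$,'' and then treat $\rho(\beta(R))=R$ as immediate. But the FFNO rule is stated in terms of the previously placed rectangle $U$: one looks at the top-right corner $p$ of $U$, decides from the local vertex type whether to take the rectangle $U_{\A}$ reached by walking left along the frontier or the rectangle $U_{\B}$ reached by walking down, and it is \emph{not} automatic that the selected rectangle has both its left and bottom edges contained in $T_{i-1}$. The paper singles this out as a separate assertion and proves it with a short geometric argument (the ``line $L$ through $p$'' observation). Without this verification the construction of $\beta(R)$ is not known to be well-defined, so your sentence ``the identity $\rho(\beta(R))=R$ is immediate'' hides a nontrivial step. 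Once you add that check, your outline matches the paper's proof; the pattern-avoidance half is indeed handled by the kind of local contradiction argument you describe, with the $2$-$41$-$3$ case obtained from the $3$-$14$-$2$ case by the diagonal-reflection symmetry rather than by a second independent case analysis.
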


Given Lemmas~\ref{2413 lemma} and~\ref{Rec to Bax}, Theorem~\ref{Bax tBax} follows as we now explain.
\begin{proof}[Proof of Theorem~\ref{Bax tBax}]
Assertions~\ref{3412 bottom} and~\ref{3412 top} of Proposition~\ref{3412 props} say in particular that a Baxter permutation is the top element of its $\Theta_{3412}$-class and a twisted Baxter permutation is the bottom element of its $\Theta_{3412}$-class.
Now Lemma~\ref{2413 lemma} implies that $\pidown^{3412}$ maps Baxter permutations to twisted Baxter permutations.
Thus, since $\pidown^{3412}$ is a bijection from top elements of $\Theta_{3412}$-classes to bottom elements of $\Theta_{3412}$-classes, the restriction of $\pidown^{3412}$ to Baxter permutations is a one-to-one map to the set of twisted Baxter permutations.
If $x$ is a twisted Baxter permutation, then Lemma~\ref{Rec to Bax} says that there is a Baxter permutation $y$ in the $\Theta_\tB$-class of $x$.
Thus, $\pidown^{3412}(y)$ is a twisted Baxter permutation.
Comparing Propositions~\ref{tBax props} and~\ref{3412 props}, we see that $\Theta_\tB$-classes are unions of $\Theta_{3412}$-classes, so $\pidown^{3412}(y)$ is in the $\Theta_\tB$-class of $x$.
Since each twisted Baxter permutation is the unique minimal element of its $\Theta_\tB$-class, we have $x=\pidown^{3412}(y)$.
Furthermore, $y=\piup_{3412}(x)$ because $y$ is the top element of its $\Theta^{3412}$-class.
\end{proof}
We now prove Lemmas~\ref{2413 lemma} and~\ref{Rec to Bax}.

\begin{proof}[Proof of Lemma~\ref{2413 lemma}]
We prove the lemma by proving the following assertion:  
If $y$ is not minimal in its $\Theta_{3412}$-class and $y$ avoids $2$-$41$-$3$, then there is a strictly lower element $y'$ in the $\Theta_{3412}$-class of $y$ which also avoids $2$-$41$-$3$.

Indeed, a $y$ that is not minimal in its $\Theta_{3412}$-class contains an instance of the pattern $3$-$41$-$2$.
That is, there exists a subsequence $cdab$ of $y_1\cdots y_n$ with $a<b<c<d$ and with $d$ immediately preceding $a$ in $y_1\cdots y_n$.
Let $e$ be the rightmost element left of $d$ which is less than $d$.
Possibly $e$ coincides with $c$, and otherwise $e$ is to the right of $c$.
Let $f_1\cdots f_i$ be the sequence of elements between $e$ and $d$.
Possibly $i=0$, in which case the sequence $f_1\cdots f_i$ is empty.
Let $h$ be the leftmost element right of $a$ which is greater than $a$.
Let $g_1\cdots g_j$ be the sequence of elements between $a$ and $h$.
Again, possibly $h=b$ and/or $j=0$.
Define a permutation $y'$ by removing the sequence $a \,g_1\cdots g_j$ from $y_1\cdots y_n$ and replacing it between $e$ and $f_1$.
Thus we write
\[y=y_1\cdots \,c\,\cdots \underbrace{\raisebox{0 pt}[8 pt][5 pt]{$e$}}_{<d}\, \underbrace{\raisebox{0 pt}[8 pt][5 pt]{$f_1\cdots f_i$}}_{>d}\, d\,a \,\underbrace{\raisebox{0 pt}[8 pt][5 pt]{$g_1\cdots g_j$}}_{<a}\, \underbrace{\raisebox{0 pt}[8 pt][5 pt]{$h$}}_{>a}\cdots \,b\, \cdots y_n\]
and
\[y'=y_1\cdots \,c\,\cdots \underbrace{\raisebox{0 pt}[8 pt][5 pt]{$e$}}_{<d}\,a \,\underbrace{\raisebox{0 pt}[8 pt][5 pt]{$g_1\cdots g_j$}}_{<a}\, \underbrace{\raisebox{0 pt}[8 pt][5 pt]{$f_1\cdots f_i$}}_{>d}\, d\, \underbrace{\raisebox{0 pt}[8 pt][5 pt]{$h$}}_{>a}\cdots \,b\, \cdots y_n\]
Since every entry in the sequence $a \,g_1\cdots g_j$ is $\le a$ and every element of the sequence $f_1\cdots f_i\, d$ is $\ge d$, we can obtain $y'$ from $y$ by a sequence of $(3412\to3142)$-moves.
In each move, $c$ plays the role of ``$3$,'' an entry from the sequence $f_1\cdots f_i\, d$ plays the role of ``$4$,'' an entry from $a \,g_1\cdots g_j$ plays the role of ``$1$,'' and $b$ plays the role of ``$2$.''
Thus by Proposition~\ref{3412 props}, $y'$ is in the $\Theta_{3412}$-class of $y$.

We now show that if $y'$ contains a $2$-$41$-$3$-pattern, then $y$ also contains a \mbox{$2$-$41$-$3$}-pattern.
Suppose that $y'$ contains a subsequence $\beta\delta\alpha\gamma$ that is a $2$-$41$-$3$-pattern.
That is, $\alpha<\beta<\gamma<\delta$ and $\delta$ immediately precedes $\alpha$ in $y'$.
We break into six easy cases, based on the location of $\alpha$ in $y'$.

\noindent
\textbf{Case 1:}  $\alpha$ is $e$ or some entry to the left of $e$.
Then $\beta\delta\alpha\gamma$ is a $2$-$41$-$3$-pattern in $y$ as well.

\noindent
\textbf{Case 2:}  $\alpha$ coincides with $a$.
Then $\delta$ coincides with $e$ and $\gamma<\delta=e<d$.
Since each $f$ is $>d$, no $f$ coincides with $\gamma$, and furthermore $\gamma$ is not $d$.
Thus $\beta da\gamma$ is a $2$-$41$-$3$-pattern in $y$.

\noindent
\textbf{Case 3:} $\alpha$ coincides with some $g$.
Then $\delta$ either $a$ or some $g$, and therefore $\delta\le a$.
In particular, since each $f$ is $>d$, and since $\gamma<\delta\le a<d$, no $f$ coincides with $\gamma$.
Therefore $\beta\delta\alpha\gamma$ is a $2$-$41$-$3$-pattern in $y$ as well.

\noindent
\textbf{Case 4:} $\alpha$ coincides with some $f$ or with $d$.
Note that $\alpha$ does not coincide with $f_1$ (and if $j=0$, then $\alpha$ is not $d$) because $\delta$ immediately precedes $\alpha$ and $\delta>\alpha$.
Now $\alpha\ge d>a$, but each $g$ is $<a$, so $\beta$ is not some $g$ and is not $a$.
Therefore $\beta\delta\alpha\gamma$ is a $2$-$41$-$3$-pattern in $y$ as well.

\noindent
\textbf{Case 5:} $\alpha$ is $h$.
Then $\delta$ is $d$.
Also, $\alpha>a$, so $\beta>\alpha>a$, and thus $\beta$ does not coincide with any $g$ nor with $a$.
Thus $\beta da \gamma$ is a $2$-$41$-$3$-pattern in $y$.

\noindent \textbf{Case 6:} $\alpha$ is some entry right of $h$.
Then $\beta\delta\alpha\gamma$ is a $2$-$41$-$3$-pattern in $y$ as well.
\end{proof}

\begin{proof}[Proof of Lemma~\ref{Rec to Bax}] 
By Proposition~\ref{rho classes}, the $\Theta_\tB$-classes are the fibers of $\rho$.
Thus it is enough to show that, for every diagonal rectangulation $R$, there is a Baxter permutation~$x$ with $\rho(x)=R$.
As in the proof of Proposition~\ref{rho surj}, we construct the permutation one entry at a time.
In other words, we totally order the rectangles of $R$ so that each rectangle has its left and bottoms sides contained in the union of the previous rectangles with the left and bottom sides of $S$.

To order the rectangles, we use an algorithm first given in~\cite{FFNO}.  
(See Remark~\ref{FFNO rem} below.)
Let $U$ be the rectangle chosen in step $i-1$, let $p$ be the top-right corner of $U$, and let $T_{i-1}$ be the union of the rectangles chosen in steps $1$ through $i-1$ with the left and bottom sides of $S$.
Follow the boundary of $T_{i-1}$ left from $p$ until it turns upward, and let $U_{\A}$ be the rectangle whose bottom-left corner is the point where the boundary turns upward.
If the top edge of $U$ is contained in the boundary of~$S$, then $U_{\A}$ is not defined.
Similarly, follow the boundary of $T_{i-1}$ down from $p$ until it turns towards the right side of $S$, and let $U_{\B}$ be the rectangle whose bottom-left corner is the point where the boundary turns.
If the right edge of $U$ is contained in the boundary of $S$, then $U_{\B}$ is not defined.
If both the top and right edges of $U$ are in the boundary of $S$, then $i-1=n$ and we are done.
Otherwise, the top-right corner $p$ of $U$ is either a left vertex or a down vertex.
If $p$ is a left vertex, then choose $U_{\A}$ as the next rectangle in the total order, and if $p$ is a down vertex, then choose $U_{\B}$ next.
(This rule is consistent with the cases where only $U_{\A}$ or only $U_{\B}$ is defined.)

To complete the proof, we need to establish two assertions:
First, we need to show that it is possible to follow this algorithm to obtain a permutation $x$ with $\rho(x)=R$.
Specifically, we need to show that, at every step $i$, the algorithm succeeds in selecting a rectangle whose left and bottom sides are contained in $T_{i-1}$.
Second, we need to show that the permutation $x$ is a Baxter permutation.

Suppose that we have successfully followed the algorithm in steps $1$ to $i-1$, choosing rectangle $U$ is step $i-1$.
Suppose that the top-right corner $p$ of $U$ is a left vertex, so that $U_{\A}$ is chosen in step $i$.
Since $p$ is a left vertex, there is a rectangle $V$ whose bottom-right corner is $p$.
(It is possible that $V=U_{\A}$.)
Let $L$ be the vertical line containing $p$.
The key point in the argument is the following immediate observation:
For every $j\ge i$, if we have successfully continued the algorithm in steps $i$ through $j-1$ without ever choosing rectangle $V$, then the rectangle chosen in step $j$ has its interior completely to the left of $L$.

We now show that, if we have successfully followed the algorithm in steps $i-1$, then the algorithm also succeeds, in step $i$, in picking a rectangle with its left and bottom sides contained in $T_{i-1}$.
By the symmetry of reflection perpendicular to the diagonal, we may as well assume that $p$ is a left vertex, so that the chosen rectangle is $U_{\A}$.
Since $p$ is a left vertex, the bottom edge of $U_{\A}$ is contained in $T_{i-1}$.
Suppose that the left edge of $U_{\A}$ is not contained in $T_{i-1}$.
Let $p'$ be the highest point in the intersection of $U_{\A}$ with $T_{i-1}$, let $U'$ be the rectangle whose top-right corner is $p'$ and let $V'$ be the rectangle whose bottom-right corner is $p'$.
In particular, $V'$ has not been chosen before step $i$, because $p'$ is the highest point in the intersection of $U_{\A}$ with $T_{i-1}$.
Since $U$ was chosen just before $U_{\A}$, it was in particular chosen after~$U'$, but then since $p'$ is a left vertex, the observation of the previous paragraph says in particular that $U$ could not have been chosen before $V'$.
This contradiction shows that the left edge of $U_{\A}$ is contained in $T$.

We have shown that the algorithm constructs a permutation $x$ with $\rho(x)=R$.
It remains to show that the permutation $x$ is a Baxter permutation.
Suppose $x$ contains a $3$-$14$-$2$-pattern: a subsequence $x_hx_{i-1}x_ix_j$ of $x_1\cdots x_n$ such that $x_{i-1}<x_j<x_h<x_i$.
As before, let $U$ be the rectangle added in step $i-1$ and let $p$, $T_{i-1}$, and $U_{\B}$ be as above.
Since the next rectangle chosen is labeled $x_i$, this rectangle is $U_{\B}$.
In particular, moving downwards from $p$ along the boundary of $T_{i-1}$. we pass to the right of the rectangle $U'$ chosen in step $h$.
But the rectangle $U''$ chosen later, in step $j$, is between $U$ and $U'$ on the diagonal, and since $T_{i-1}$ is left- and bottom-justified we see that the interiors of $U''$ and $T_{i-1}$ intersect.
This contradiction implies that $x$ contains no $3$-$14$-$2$-pattern.
If $x$ contains a $2$-$41$-$3$-pattern, 
then we reach a similar contradiction by an argument that is symmetric with respect to reflection perpendicular to the diagonal.
\end{proof}

\begin{remark}\label{FFNO rem}
The algorithm given in the proof of Lemma~\ref{Rec to Bax} is essentially borrowed from the proof of \cite[Theorem~5.1]{FFNO}, which gives a bijection between Baxter permutations and twin binary trees.
The ideas used above to prove that the algorithm works and produces a Baxter permutation are also essentially the same.
However, the algorithm in \cite[Theorem~5.1]{FFNO} outputs the inverse of the permutation constructed in the proof of Lemma~\ref{Rec to Bax}.
For this reason, for completeness, and also because of the differences between Theorem~\ref{Bax tBax} and \cite[Theorem~5.1]{FFNO}, we reproduce the argument here.
\end{remark}

\subsection{Lattice and Hopf structures on Baxter permutations}\label{Bax lat Hopf sec}
In this section, use Theorem~\ref{Bax tBax} to study the subposet (in fact, lattice) of the weak order induced by Baxter permutations, and to put a Hopf algebra structure on Baxter permutations.

In a general finite lattice, it is well known, and easy to verify, that the downward and upward projection maps associated to a congruence are order-preserving.
Furthermore, the quotient modulo a congruence is isomorphic not only to the subposet induced by the bottom elements of congruence classes, but also, by symmetry, to the subposet induced by the top elements.
The downward projection map is an isomorphism from the restriction to top elements to the restriction to bottom elements, and the upward projection map is the inverse.
In the case of the congruence $\Theta_{3412}$, these considerations imply the following corollary of Theorem~\ref{Bax tBax}.

\begin{cor}\label{isom cor}
The restriction of $\pidown^{3412}$ is an isomorphism from the weak order restricted to Baxter permutations to the weak order restricted to twisted Baxter permutations.
The inverse isomorphism is $\piup_{3412}$.
In particular, the weak order on Baxter permutations is a lattice.
\end{cor}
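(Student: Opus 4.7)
The plan is to combine the bijection supplied by Theorem~\ref{Bax tBax} with standard order-theoretic properties of lattice congruences. The first two sentences of the corollary amount to upgrading the bijection to a poset isomorphism, and the lattice claim will follow by transport of structure.

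For the isomorphism, I would begin by recalling, as noted in the paragraph immediately preceding the corollary, that in any finite lattice the downward and upward projection maps associated to a lattice congruence are order-preserving. Applied to $\Theta_{3412}$, this says that $\pidown^{3412}$ and $\piup_{3412}$ are order-preserving on the whole weak order on $S_n$. Restriction of an order-preserving map remains order-preserving, and Theorem~\ref{Bax tBax} tells us that the restriction of $\pidown^{3412}$ to Baxter permutations and the restriction of $\piup_{3412}$ to twisted Baxter permutations are mutually inverse bijections between these two sets. An order-preserving bijection whose inverse is also order-preserving is a poset isomorphism, which yields the first two sentences of the corollary.

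For the lattice assertion, I would recall from the opening of Section~\ref{lattice rec sec} that the weak order modulo $\Theta_\tB$ is itself a lattice, and that by the general fact cited in Section~\ref{pattern sec}, this quotient is isomorphic to the subposet of the weak order induced on the $\Theta_\tB$-minimal representatives. By Proposition~\ref{tBax props}.\ref{tBax bottoms} these representatives are exactly the twisted Baxter permutations, so the weak order restricted to twisted Baxter permutations is a lattice. Transporting the structure along the isomorphism established in the previous step, the weak order restricted to Baxter permutations is also a lattice.

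I do not anticipate a real obstacle here: the corollary is essentially a packaging of Theorem~\ref{Bax tBax} together with standard lattice-congruence machinery. The only subtle point worth attending to is that Baxter and twisted Baxter permutations form \emph{proper} subsets of the $\Theta_{3412}$-maximal and $\Theta_{3412}$-minimal elements, respectively, so the isomorphism between those larger sets that comes directly from the general fact is not the one we want; Theorem~\ref{Bax tBax} is precisely what allows us to cut that isomorphism down to the desired subsets.
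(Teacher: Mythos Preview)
Your proposal is correct and follows essentially the same approach as the paper: the paragraph preceding the corollary outlines exactly this argument, invoking the order-preservation of the projection maps for $\Theta_{3412}$ together with Theorem~\ref{Bax tBax} to upgrade the bijection to a poset isomorphism, with the lattice claim following by transport from the known lattice structure on twisted Baxter permutations. Your final paragraph correctly identifies the one genuine subtlety---that the general top-to-bottom isomorphism for $\Theta_{3412}$ must be cut down via Theorem~\ref{Bax tBax}---and handles it appropriately.
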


Recall from Section~\ref{lattice rec sec} that the map $\rho$ is an isomorphism from the weak order restricted to twisted Baxter permutations to the lattice of diagonal rectangulations.
Thus $\rho\circ\pidown^{3412}$ is an isomorphism from the weak order restricted to Baxter permutations to the lattice of diagonal rectangulations.
However, the map $\rho$ is constant on $\Theta_\tB$-classes, and thus constant on $\Theta_{3412}$-classes, so that $\rho\circ\pidown^{3412}=\rho$.
Thus we have the following corollary, where $\beta(R)$ denotes the Baxter permutation produced from the diagonal rectangulation $R$ by the procedure described in the proof of Lemma~\ref{Rec to Bax}.

\begin{cor}\label{rec isom cor}
The restriction of $\rho$ is an isomorphism from the weak order restricted to Baxter permutations to the lattice of diagonal rectangulations, with inverse $\beta$.
\end{cor}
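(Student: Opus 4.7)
The plan is to assemble the corollary out of pieces already established, by threading the bijection of Theorem~\ref{Bax tBax} through the isomorphism $\rho|_{\tBax}$ from Section~\ref{lattice rec sec}. The key structural fact I will need is that $\Theta_{3412}$ refines $\Theta_\tB$: comparing Proposition~\ref{tBax props}.\ref{tBax moves} with Proposition~\ref{3412 props}.(3), every pair of permutations related by a single $(3412\to3142)$-move (which generates $\Theta_{3412}$) is related modulo $\Theta_\tB$, so $\Theta_{3412}$-classes are contained in $\Theta_\tB$-classes. Since $\rho$ is constant on $\Theta_\tB$-classes by Proposition~\ref{rho classes}, it is constant on $\Theta_{3412}$-classes as well, and hence $\rho\circ\pidown^{3412}=\rho$ as maps on $S_n$.

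Next I would combine this identity with the two isomorphisms already in hand. By Corollary~\ref{isom cor}, $\pidown^{3412}$ restricts to an order isomorphism from the weak order on $\Bax_n$ to the weak order on $\tBax_n$. By Theorem~\ref{rho bij} together with the discussion in Section~\ref{lattice rec sec} (the quotient of the weak order modulo $\Theta_\tB$ coincides with the induced subposet on $\tBax_n$ and is the lattice $\dRec_n$), the map $\rho$ restricts to a lattice isomorphism from the weak order on $\tBax_n$ to $\dRec_n$. The composition $\rho\circ\pidown^{3412}$ is therefore a lattice isomorphism from the weak order on $\Bax_n$ to $\dRec_n$, and by the first paragraph this composition simply equals the restriction of $\rho$ to $\Bax_n$. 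In particular, the weak order on $\Bax_n$ is itself a lattice, and $\rho|_{\Bax_n}$ is the claimed isomorphism.

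Finally, I would identify the inverse as $\beta$. The construction in the proof of Lemma~\ref{Rec to Bax} produces, for every diagonal rectangulation $R$, a Baxter permutation $\beta(R)$ satisfying $\rho(\beta(R))=R$. Since the restriction of $\rho$ to $\Bax_n$ has already been shown to be a bijection onto $\dRec_n$, the preimage of $R$ under $\rho|_{\Bax_n}$ is unique, forcing $\beta=(\rho|_{\Bax_n})^{-1}$. No obstacle of substance arises here; the main point requiring care is the verification that $\Theta_{3412}$ refines $\Theta_\tB$ so that $\rho\circ\pidown^{3412}=\rho$, and everything else is a routine concatenation of previously proved isomorphisms.
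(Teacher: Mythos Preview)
Your proposal is correct and follows essentially the same route as the paper: compose the isomorphism $\pidown^{3412}\colon\Bax_n\to\tBax_n$ from Corollary~\ref{isom cor} with the isomorphism $\rho|_{\tBax_n}\colon\tBax_n\to\dRec_n$ from Section~\ref{lattice rec sec}, then observe that $\Theta_{3412}$ refines $\Theta_\tB$ so that $\rho\circ\pidown^{3412}=\rho$, and finally identify the inverse as $\beta$. The only difference is that you spell out the refinement argument and the identification of $\beta$ a bit more explicitly than the paper does.
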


The bijection between twisted Baxter permutations and Baxter permutations also allows us to define a Hopf algebra structure on Baxter permutations.
Let $c^B$ be the map taking a Baxter permutation $y$ to the sum, in MR, of the permutations in the $\Theta_\tB$-class of $y$.
The elements of MR of the form $c^B(y)$ for Baxter permutations $y$ coincide with the elements $c^{\Theta_\tB}(x)$ for twisted Baxter permutations $x$.
As discussed in Section~\ref{tBax Hopf sec}, these elements form a graded basis for a graded Hopf subalgebra of MR.
Instead of realizing this Hopf subalgebra as a Hopf algebra structure on $\K[\tBax_\infty]$, we can just as well realize it as a Hopf algebra structure on $\K[\Bax_\infty]=\bigoplus_{n\ge 0}\K[\Bax_n]$.

The product on Baxter permutations is necessarily $x\bullet_B y=r^B(c^B(x)\bullet_Sc^B(y))$, where $r^B$ is the linear map on $\K[S_\infty]$ whose action on permutations is to fix Baxter permutations and to send all other permutations to zero.
This product is written more simply as $x\bullet_B y=r^B(x\bullet_Sy)$.
In other words, $x\bullet_By$ is obtained by summing over all shifted shuffles of $x$ and $y$, but ignoring those shuffles that are not Baxter permutations.
To see why this simpler formula for the product holds, notice that Proposition~\ref{Bax restrict} implies that a shuffle of two permutations $x$ and $y$ is not a Baxter permutation unless both $x$ and $y$ are Baxter permutations.
(A generalization of Proposition~\ref{tBax restrict} similarly accounts for the fact that the product described in Section~\ref{H fam sec} is $r^\Theta(x\bullet_Sy)$ rather than $r^\Theta(c^\Theta(x)\bullet_Sc^\Theta(y))$.)

The coproduct on Baxter permutations is $\Delta_B=(r^B\otimes r^B)\circ\Delta_S\circ c^B$.
Just as in tBax, this coproduct cannot be further simplified.
Thus the coproduct $\Delta_B(x)$ of a Baxter permutation $x$ is obtained by summing $\Delta_S(y)$ over all permutations $y$ in the $\Theta_\tB$-class of $x$ and then deleting all terms which contain permutations that are not Baxter.

\section*{Acknowledgments}\label{ack}
The authors thank Marcelo Aguiar for helpful conversations.

\end{document}